\newtheorem{theorem}{Theorem}[section]
\newtheorem{lemma}[theorem]{Lemma}
\newtheorem{proposition}{Proposition}
\numberwithin{equation}{section}
\DeclareRobustCommand{\rchi}{{\mathpalette\irchi\relax}}
\newcommand{\irchi}[2]{\raisebox{\depth}{$#1\chi$}}
\title{Heat Generation Using Lorentzian Nanoparticles. \\ The Full Maxwell System}
\author{Arpan Mukherjee\footnote{Radon Institute (RICAM), Austrian Academy of
Sciences, Altenbergerstrasse 69, A-4040, Linz, Austria (arpan.mukherjee@oeaw.ac.at). This author is supported by the Austrian Science Fund (FWF): P32660.} \ and Mourad Sini\footnote{Radon Institute (RICAM), Austrian Academy of
Sciences, Altenbergerstrasse 69, A-4040, Linz, Austria (mourad.sini@oeaw.ac.at). This author is partially supported by the Austrian Science Fund (FWF): P32660.}}
\begin{document}
\maketitle
\begin{abstract}
    We analyse and quantify the amount of heat generated by a nanoparticle, injected in a background medium, while excited by incident electromagnetic waves. These nanoparticles are dispersive with electric permittivity following the Lorentz model. The purpose is to determine the quantity of heat generated extremely close to the nanoparticle (at a distance proportional to the radius of the nanoparticle). This study extends our previous results, derived in the 2D TM and TE regimes, to the full Maxwell system. We show that by exciting the medium with incident frequencies close to the Plasmonic or Dielectric resonant frequencies, we can generate any desired amount of heat close to the injected nanoparticle while the amount of heat decreases away from it. These results offer a wide range of potential applications in the areas of photo-thermal therapy, drug delivery, and material science, to cite a few.
    \\
    
    To do so, we employ time-domain integral equations and asymptotic analysis techniques to study the corresponding mathematical model for heat generation. This model is given by the heat equation where the body source term comes from the modulus of the electric field generated by the used incident electromagnetic field. Therefore, we first analyse the dominant term of this electric field by studying the full Maxwell scattering problem in the presence of Plasmonic or All-dielectric nanoparticles. As a second step, we analyse the propagation of this dominant electric field in the estimation of the heat potential. For both the electromagnetic and parabolic models, the presence of the nanoparticles is translated into the appearance of large scales in the contrasts for the heat-conductivity (for the parabolic model) and the permittivity (for the full Maxwell system) between the nanoparticle and its surrounding.    
    
    \vskip 0.1in
    \noindent
    \textbf{Key Words:} Maxwell's Equations, Parabolic Transmission Problem, Lorentzian Nanoparticle, Plasmonic and Dielectric Resonances. 
\end{abstract}
\section{Introduction and statement of the results}
\subsection{Motivation}
 It is well known that an electric laser field stimulates surface plasmons at optical frequencies on metallic nanoparticles. In turn, these plasmons produce heat from the absorbed energy that diffuses away from the nanoparticles to raise the temperature of the surrounding medium. In addition to being helpful for analyzing the principles of nanoscale heat transport, the ability to produce point-like heat sources has the potential for various significant uses, including medical therapy, thermal lithography, heat-assisted magnetic recording, \cite{intro2, intro3, intro4, intro1}. Over the years, this phenomenon has been well investigated for plasmonic nanoparticles within the framework of thermo-plasmonics, which has few practical restrictions. All-dielectric resonant nanophotonics is a new discipline of nanophotonics that uses optically generated dielectric resonances to get over those restrictions, \cite{dn2, dn1}. In the current work, we consider both types of nanoparticles based on the Lorentz model and attempt to leverage the optical characteristics of the nanoparticles to generate the desired amount of heat around a nanoparticle. Based on the Lorentz model, the same nanoparticle can have different properties while excited with different regimes of incident frequencies. Let us assume that the nanoparticle is nonmagnetic, meaning that its permeability is non-dispersive and matches with the one of a vacuum, however, its permittivity $\varepsilon_{\mathrm{p}}$ is given by the Lorentz model which can be described as follows
\begin{align}\label{lorenz-model}
    \varepsilon_\mathrm{p}(\omega) = \varepsilon_\infty\Big[1+\dfrac{\omega_\mathrm{p}^2}{\omega_0^2-\omega^2 - i\zeta\omega} \Big]
\end{align}
where $\omega_\mathrm{p}$ is the electric plasma frequency, $\omega_0$ is the undamped resonance frequency, $\zeta$ is the electric damping parameter and $\varepsilon_\infty$ is the electric permittivity of the free space. With such a model, we have the following characterization. If the used incident frequencies are in the band $(\omega_0, \sqrt{\omega^2_0+\omega_p^2})$, then the nanoparticle behaves as a Plasmonic one enjoying a proper sequence of Plasmonic resonant frequencies. But if it is excited with incident frequencies in the band $(0, \omega_0)$, then it behaves as Dielectric nanoparticle which enjoy a proper sequence of Dielectric resonant frequencies.  Such a characterization is shown in Section \ref{the scales}. In addition, with such choices of the incident frequencies, we show that the the quality
factor that we define as $Q:=\frac{\Re{(\varepsilon_\mathrm{p})}}{\Im(\varepsilon_\mathrm{p})}$ has large values for both the plasmonic and dielectric nanoparticles. This $Q$-factor is proportional to the ratio between the oscillation period of
the light and its life time. This indicates how absorbing/diffusing the nanoparticle is. In the sequel, we will choose the incident frequencies so that this $Q$-factor is large but not too large so that it allows the nanoparticle to resonate at certain particular frequencies.  Therefore, with such choices of incident frequencies, the nanoparticle will enhance the exciting incident field in a similar way being it plasmonic or dielectric. As a consequence, the nanoparticle will generate any desired amount of heat in its vicinity. Our goal is to justify these principles and quantify the amount of heat generated by the presence of the nanoparticles in terms of their (tunable) properties. 

\bigskip

\noindent
A first attempt to study this phenomenon goes back to \cite{ProfHabib} where the model is stated in the 2D-TE regime. The authors estimated the heat on the surface of the nanoparticle using semi-formal arguments based on the Laplace transform. In \cite{AM}, we have reconsidered this problem using time-domain techniques and derived the heat generated by both plasmonic and dielectric nanoparticles (in the spirit discussed above). The present work aims to extend the conclusions of \cite{AM} by considering the full Maxwell system instead of the 2D-TM or TE regimes. The outcome is that, indeed, using plasmonic or dielectric nanoparticle, we can estimate the heat generated very close to it, i.e. at distances of the order of radius of the nanoparticle. The amplitude of the generated heat is given in terms of the properties of the used nanoparticle, which can in turn be tuned to reach any desired heat potential around it. At the mathematical analysis level, we follow the approach used in \cite{AM} using time-domain integral equation methods coupled with asymptotic analysis techniques. Compared to \cite{AM}, the challenging difficulty rises in dealing with the full-Maxwell system.

\subsection{The heat generation model using nanoparticles}
In this section, we provide with the necessary mathematical framework formulations and the detailed obtained results are stated. For a bounded domain $\Omega\subseteq \mathbb{R}^3$ of class $\mathcal{C}^2$, the heat generation process using nanoparticles is governed by the following parabolic transmission problem \cite{ProfHabib, baffou}
\begin{equation}\label{eq:heat}
\begin{cases}   \rho \mathrm{c}\frac{\partial u}{\partial t} - \nabla.\ \gamma\nabla u = \frac{\omega}{2\pi} \boldsymbol{\Im}(\varepsilon)|\mathrm{E}|^{2} \ \ \text{in} \ \ (\mathbb{R}^{3}\setminus \partial\Omega) \times (0,\mathrm{T}),\\
 \bm{\mathrm{D}^-_0}u\- - \bm{\mathrm{D}^+_0}u=0    \quad \quad \quad \quad \quad \quad \text{on} \ \partial \Omega,\\
 \gamma_{\mathrm{\mathrm{p}}}\bm{\mathrm{D}^-_\nu}u - \gamma_{\mathrm{m}} \bm{\mathrm{D}^+_\nu}u= 0 \ \ \ \ \ \ \ \ \quad \text{on}\; \partial \Omega, \\
 u(\mathrm{x},0) = 0 \ \ \ \ \ \ \ \ \ \ \ \ \ \ \ \ \ \ \ \ \ \ \ \ \ \ \ \text{for} \ \mathrm{x} \in \mathbb{R}^3,
\end{cases}
\end{equation}
where $\rho = \rho_{\mathrm{p}}\rchi_{\Omega} + \rho_{m}\rchi_{\mathbb{R}^{3}\setminus\overline{\Omega}}$ is the mass density; $\mathrm{c} = \mathrm{c}_{\mathrm{p}}\rchi_{\Omega} + \mathrm{c}_{\mathrm{m}}\rchi_{\mathbb{R}^{3}\setminus\overline{\Omega}}$ is the thermal capacity; $\gamma = \gamma_{\mathrm{p}}\rchi_{\Omega} + \gamma_{m}\rchi_{\mathbb{R}^{3}\setminus\overline{\Omega}}$ is the thermal conductivity and we recall that $\varepsilon = \varepsilon_\mathrm{p} \rchi_{\Omega} + \varepsilon_\mathrm{m}\rchi_{\mathbb{R}^{3}\setminus\overline{\Omega}}$ is the electric permittivity respectively. Here, $\mathrm{T}\in \mathbb{R}$ is the final time of measurement. Given that the host medium is non dispersive, we define $\varepsilon_\mathrm{m} = \varepsilon_\infty \varepsilon'_\mathrm{m}$  as the relative permittivity of the host medium, which is considered to be constant and independent of the incident wave's frequency $\omega$. But its permittivity $\varepsilon_{\mathrm{p}}$ is given by the Lorentz model described in (\ref{lorenz-model}). 
 Moreover, $\bm{\mathrm{D}_\nu}$ denotes the Neumann trace and we use the notation $\bm{\mathrm{D}_\nu}^{\pm}$ indicating $
    \bm{\mathrm{D}_\nu}^{\pm}u(\mathrm{x},\mathrm{t}) = \lim_{\mathrm{h}\to 0}\nabla u(\mathrm{x}\pm \mathrm{h}\nu_\mathrm{x},\mathrm{t})\cdot \nu_\mathrm{x},$
where $\nu$ being the outward normal vector to $\partial\Omega$. Analogously, we indicate $\bm{\mathrm{D}^\pm_0}$ as the interior and exterior Dirichlet trace.
\\

\noindent
The source term $\mathrm{E}$ is the time-harmonic electric field solution to the problem 
\begin{align}\label{modifiedmaxwell}
    \begin{cases}
    \textbf{Curl}\; \mathrm{E} = i\omega\mu\mathrm{H} \quad \text{in}\; \mathbb{R}^3\\
    \textbf{Curl}\; \mathrm{H} = -i\omega \varepsilon\mathrm{E} \quad \text{in}\; \mathbb{R}^3.\\
    \end{cases}
\end{align}
where $\mathrm{H} $ is the related magnetic field. Moreover, we consider the magnetic permeability of the form $\mu = \mu_{\mathrm{p}}\rchi_{\Omega} + \mu_{\mathrm{m}}\rchi_{\mathbb{R}^{2}\setminus\overline{\Omega}}$. We denote by $\mu_\mathrm{m} = \mu_{\infty}\mu'_\mathrm{m}$ to be the relative permeability of the host medium, which is assumed to be constant and independent of the frequency $\omega$ of the incident wave and  $\mu_{\infty}$ is the magnetic permeability of the free space. Next, we assume the nanoparticle to be nonmagnetic, i.e. $\mu_{\mathrm{p}} = \mu_{\infty}\mu'_{m}$.
\\

\noindent
By dividing the first equation by $\mu$ and taking curl, we may also remove the magnetic field from the (\ref{modifiedmaxwell}), yielding the modified equation shown below
\begin{align}\label{Maxwell model}
    \textbf{Curl}\; \frac{1}{\mu}\textbf{Curl}\; \mathrm{E} - \omega^2\varepsilon \mathrm{E} = 0 \quad \text{in}\; \mathbb{R}^3
\end{align}
Moreover, for $\omega \in \mathbb{R}^+$, we say $(\mathrm{E},\mathrm{H})$ is radiating if it satisfies the well-known Silver-Müller radiation condition:
\begin{equation*}
    \lim_{|\mathrm{x}|\to +\infty} |\mathrm{x}|\cdot\big(\mathrm{H} \times \hat{\mathrm{x}}-\sqrt{\frac{\varepsilon}{\mu}}\mathrm{E}\big) = 0.
\end{equation*}
We also consider $(\mathrm{E}^\textbf{in},\mathrm{H}^\textbf{in})$ to be the incident plane wave satisfying 
\begin{equation*}
    \mathrm{E}^\textbf{in} = \mathrm{E}_0^\textbf{in} e^{i\mathrm{k}\; \vartheta\cdot\mathrm{x}} \; \text{and}\; \mathrm{H}^\textbf{in} = \vartheta \times \mathrm{E}_0^\textbf{in} e^{i\mathrm{k}\;\vartheta\cdot\mathrm{x}}, 
\end{equation*}
where the direction of wave propagation $\vartheta\in \mathbb{S}$ (unit sphere in $\mathbb{R}^3$), $\mathrm{E}_0^\textbf{in}\in \mathbb{S}$ is the polarization vector satisfying $\vartheta\cdot \mathrm{E}_0^\textbf{in} = 0$ and $\mathrm{k}= \omega \sqrt{\varepsilon\mu}$ is the wave number with the incidence frequency $\omega.$ 
\\

\noindent
Moreover, it is also assumed that the coefficients $\rho_{\mathrm{p}},\rho_{m},\mathrm{c}_{\mathrm{p}},\mathrm{c}_{\mathrm{m}},\gamma_{\mathrm{p}},\gamma_{m}$ to be piece-wise constants with one constant outside of $\Omega$. We also note that $\boldsymbol{\Im}(\varepsilon) = 0$ in $(\mathbb{R}^{3}\setminus \overline{\Omega})$. 
\\

\noindent
Furthermore, with $\mathrm{T}_{0}$ fixed and $u=0$ for $t<0$, we have $\mathrm{U} = u$ on $\mathbb{R}^3\times (-\infty,T_{0})$, thus to analyze $u\in (0,\mathrm{T}_0$), it suffices to investigate the following governing transmissions heat equations as follows:
\begin{align}\label{heattran}
\begin{cases}   \frac{\rho_{\mathrm{p}}\mathrm{c}_{\mathrm{p}}}{\gamma_{\mathrm{p}}}\frac{\partial \mathrm{U}_{\mathrm{i}}}{\partial t} - \Delta \mathrm{U}_{\mathrm{i}}= \frac{\omega}{2\pi\gamma_{\mathrm{p}}}\boldsymbol{\Im}(\varepsilon_\mathrm{p})|\mathrm{E}|^{2}\rchi_{(0,\mathrm{T_0})} \ \ in \ \ \Omega \times \mathbb{R}\\
\frac{\rho_{\mathrm{m}}\mathrm{c}_{\mathrm{m}}}{\gamma_{\mathrm{m}}}\frac{\partial \mathrm{U}_{\mathrm{e}}}{\partial t} - \Delta \mathrm{U}_{\mathrm{e}} = 0 \ \ \ \ \ \ \ \ \ \ \ \ \ \ in \ \mathbb{R}^{3}\setminus \overline\Omega \times \mathbb{R}\\
  \bm{\mathrm{D}^-_0}\mathrm{U}_{\mathrm{i}} \ - \ \bm{\mathrm{D}^+_0}\mathrm{U}_{\mathrm{e}}=0    \ \ \ \ \ \ \ \ \ \ \ \ \  on \ \partial \Omega \times \mathbb{R},\\
 \gamma_\mathrm{p}\bm{\mathrm{D}^-_\nu}\mathrm{U}_{\mathrm{i}}\ - \ \gamma_{m}\bm{\mathrm{D}^+_\nu}\mathrm{U}_{\mathrm{e}}= 0 \ \ \ \ \ \ \ on \ \partial \Omega \times \mathbb{R}
\end{cases}
\end{align}
where $\mathrm{U}_\mathrm{e}(\mathrm{x},t)$ is assumed to be uniformly bounded in both variables,  \cite{hofman}.
\\

\noindent
We set $\Phi(\mathrm{x},t;\mathrm{y},\tau)$ equal to fundamental solution to the heat operator $\alpha\partial_\mathrm{t}-\Delta$ in three dimensional spatial variables as follows:
\begin{align}
        \Phi(\mathrm{x},t;\mathrm{y},\tau):=\  \begin{cases}
    \Big(\frac{\alpha}{4\pi(t-\tau)}\Big)^\frac{3}{2}\textbf{exp}\big(-\frac{\alpha |\mathrm{x}-\mathrm{y}|^2}{4(t-\tau)}\big), \ \ \ t > \tau \\
    0 ,\quad \text{otherwise}
    \end{cases}.
\end{align}
The fundamental solutions for the interior and exterior heat equation (\ref{heattran}) are $\Phi^\mathrm{i}(\mathrm{x},t;\mathrm{y},\tau)$ and $\Phi^\mathrm{e}(\mathrm{x},t;\mathrm{y},\tau)$ respectively, which depend on the variables $\alpha_{\mathrm{p}}:= \frac{\rho_{\mathrm{p}}\mathrm{c}_{\mathrm{p}}}{\gamma_{\mathrm{p}}} $ and $\alpha_\mathrm{m}:= \frac{\rho_{\mathrm{m}}\mathrm{c}_{\mathrm{m}}}{\gamma_{\mathrm{m}}}$.
\\

\noindent
\subsection{The related regimes}\label{the scales}
To describe correctly the scales needed in the mathematical analysis, we consider the nanoparticle to be of the form $\Omega = \delta \mathrm{B} + \mathrm{z}$, where $\delta$ defines the size of the nanoparticle, $\mathrm{B}$ is centered at origin and $\mathrm{z}$ specifies the position of the nanoparticle and $|\mathrm{B}|\sim 1.$ 
We also assume that the nanoparticle has the following scales regarding the heat-related coefficients
\begin{equation}\label{Assumption-heat-coefficients-section 3D}
\gamma_p = \overline{\gamma}_p\; \delta^{-2}, ~~ \rho_{\mathrm{p}}\mathrm{c}_{\mathrm{p}} \sim 1, ~~\mbox{ and }~~ \alpha_\mathrm{m} \sim 1, ~~ \delta \ll 1.
\end{equation}

\noindent
The next important step is to identify suitable Hilbert spaces, which in particular incorporate the Lippmann-Schwinger equation corresponding to (\ref{Maxwell model}) and allows us to do the needed analysis. For this, we introduce the following function spaces: 
\begin{align}
\begin{cases}
        \mathbb{H}(\textbf{div},\Omega):= \Big\{ u\in \big(\mathbb{L}^{2}(\Omega)\big)^3 :\; \textbf{div}\; u \in \mathbb{L}^{2}(\Omega)\Big\} \; \text{and}\\
   \mathbb{H}(\textbf{curl},\Omega):= \Big\{ u\in \big(\mathbb{L}^{2}(\Omega)\big)^3 :\; \textbf{curl}\; u \in \big(\mathbb{L}^{2}(\Omega)\big)^3\Big\}
\end{cases}
\end{align}
and recall the decomposition
\begin{align}
\big(\mathbb{L}^{2}(\Omega)\big)^3 = \mathbb{H}_{0}(\textbf{div},0) \oplus\mathbb{H}_{0}(\textbf{curl},0)\oplus \nabla \mathbb{H}_{\textbf{arm}},
\end{align}
where 
\begin{align}
    \begin{cases}
        \mathbb{H}_{0}(\textbf{div},0) = \Big\{ u\in \mathbb{H}(\textbf{div},\Omega): \textbf{div}\;u =0\;\text{in}\; \Omega \; \text{and}\;   u\cdot \nu = 0 \; \text{on}\; \partial\Omega\Big\}, \\
        \mathbb{H}_{0}(\textbf{curl},0) = \Big\{ u\in \mathbb{H}(\textbf{curl},\Omega): \textbf{curl}\;u =0\;\text{in}\; \Omega \; \text{and}\; u\times \nu = 0 \; \text{on}\; \partial\Omega\Big\}, \; \text{and} \\
        \nabla \mathbb{H}_{\textbf{arm}} = \Big\{ u \in \big(\mathbb{L}^{2}(\Omega)\big)^3: \exists \  \varphi \ \text{s.t.} \ u = \nabla \varphi,\; \varphi\in \mathbb{H}^1(\Omega)\; \text{and} \ \Delta \varphi = 0 \Big\}.
    \end{cases}
\end{align}

\noindent
 Let us also recall the fundamental solution of the Helmholtz propagator $\Delta + \mathrm{k}^2$ satisfying the outgoing Sommerfeld radiation condition at infinity $\mathbb{G}^{(\mathrm{k})}(\cdot,\mathrm{k})$, which is defined as
\begin{align}
    \mathbb{G}^{(\mathrm{k})}(\mathrm{x},\mathrm{y}, \mathrm{k}) := \frac{e^{i\mathrm{k}|\mathrm{x}-\mathrm{y}|}}{4\pi|\mathrm{x}-\mathrm{y}|},~~ x \neq y.
\end{align}
The Magnetization operator $\mathbb{M}^{(\mathrm{k})}$ from $\nabla \mathbb{H}_{\textbf{arm}}$ to $\nabla \mathbb{H}_{\textbf{arm}}$ and the Newtonian operator from $\mathbb{L}^2(\Omega)$ to $\mathbb{H}^2(\Omega)$, are therefore defined as follows
\begin{align}
    \mathbb{M}_{\Omega}^{(\mathrm{k})}\big[u](\mathrm{x}) := \nabla \int_{\Omega}  \nabla\mathbb{G}^{(\mathrm{k})}(\mathrm{x},\mathrm{y})\cdot u(\mathrm{y})d\mathrm{y} \quad \text{and} \quad \mathbb{N}_{\Omega}^{(\mathrm{k})}\big[ u\big](\mathrm{x}) := \int_{\Omega}  \mathbb{G}^{(\mathrm{k})}(\mathrm{x},\mathrm{y})u(\mathrm{y})d\mathrm{y}, \; \text{respectively}.
\end{align}
In particular, we indicate $\mathbb{M}_{\Omega}^{(\mathrm{0})}$ and $\mathbb{N}_{\Omega}^{(\mathrm{0})}$ as the respective operators when $\mathrm{k}=0.$ 
\\

\noindent
Furthermore, we recall the Lippmann-Schwinger equation satisfied by the solution of (\ref{Maxwell model}) 
\begin{align}\label{Lippmann}
    \mathrm{E}(\mathrm{x}) + \varsigma \nabla \int_{\Omega}  \nabla\mathbb{G}^{(\mathrm{k})}(\mathrm{x},\mathrm{y})\cdot \mathrm{E}(\mathrm{y})d\mathrm{y} - \omega^2\mu_\mathrm{m} \varsigma \int_{\Omega}  \mathbb{G}^{(\mathrm{k})}(\mathrm{x},\mathrm{y})\mathrm{E}(\mathrm{y})d\mathrm{y} = \mathrm{E}^\textbf{in}(\mathrm{x}), \; \mathrm{x}\in \Omega,
\end{align}
where $\varsigma := \varepsilon_\mathrm{p}(\omega)-\varepsilon_\mathrm{m}$ is the contrast parameter and $\mathrm{k}= \omega \sqrt{\mu_m \epsilon_m}$ is the wave number.
\\

\noindent
Assume now that the nanoparticle is of the form $\Omega = \delta \mathrm{B}+ \mathrm{z} \subseteq \mathbb{R}^3$ which is of class $\mathcal{C}^2$. Then (\ref{Lippmann}) becomes
\begin{align}\label{Lippmann-scaled}
    \tilde{\mathrm{E}}(\tilde{\mathrm{x}}) + \varsigma \nabla \int_{\mathrm{B}}  \nabla\mathbb{G}^{(\mathrm{k} \delta)}(\tilde{\mathrm{x}},\tilde{\mathrm{y}})\cdot \tilde{\mathrm{E}}(\tilde{\mathrm{y}})d\tilde{\mathrm{y}} - \omega^2\mu_\mathrm{m} \varsigma \delta^2 \int_{\mathrm{B}}  \mathbb{G}^{(\mathrm{k}\delta)}(\tilde{\mathrm{x}},\tilde{\mathrm{y}})\tilde{\mathrm{E}}(\tilde{\mathrm{y}})d\tilde{\mathrm{y}} = \tilde{\mathrm{E}}^\textbf{in}(\tilde{\mathrm{x}}), \; \tilde{\mathrm{x}}\in \mathrm{B},
\end{align}
where $\tilde{x}:=\frac{\mathrm{x}-\mathrm{z}}{\delta}$, $\tilde{\mathrm{E}}:= \mathrm{E}(\frac{\mathrm{x}-\mathrm{z}}{\delta})$ and $\tilde{\mathrm{E}}^{\text{in}}:= \mathrm{E}^\text{in}(\frac{\mathrm{x}-\mathrm{z}}{\delta})$. In short, we write (\ref{Lippmann-scaled}) as 
\begin{align}\label{Lippmann-scaled-operator}
    \tilde{\mathrm{E}} + \varsigma \mathbb{M}_{\mathrm{B}}^{(\mathrm{k} \delta)} \tilde{\mathrm{E}} - \omega^2\mu_\mathrm{m} \varsigma \delta^2 \mathbb{N}_{\mathrm{B}}^{(\mathrm{k}\delta)}\tilde{\mathrm{E}}  = \tilde{\mathrm{E}}^\textbf{in}.
\end{align}
We are interested in the quasi-static regimes where $\mathrm{k} \delta\ll 1$ as compared to the size of $B$. Recall that $\mathbb{N}_{B}^{(0)}$ and $\mathbb{M}_{\mathrm{B}}^{(0)}$ are positive on the spaces $\mathbb{H}_{0}(\textbf{div},0)$ and $\nabla \mathbb{H}_{\textbf{arm}}$ respectively. In addition, on their respective subspace, they generate sequences of eigen-elements that we denote by $(\lambda^{(1)}_n, e^{(1}_n)$ and  $(\lambda^{(3)}_\mathrm{n}, \mathrm{e}^{(3)}_\mathrm{n})$. \footnote{The operator $\mathbb{N}_{B}^{(0)}$ also generates a sequence of eigen-elements on $\mathbb{H}_{0}(\textbf{curl},0)$ that we denote $(\lambda^{(2)}_\mathrm{n}, \mathrm{e}^{(2)}_\mathrm{n})$.} 
\bigskip

\noindent
We observe that 
\begin{enumerate}
\item If $\Re(\varsigma)<0$, then we can excite the eigenvalues of the Magnetization operator $\mathbb{M}_{\mathrm{B}}^{(0)}$ while the ones of the Newtonian operator $\mathbb{N}_{\mathrm{B}}^{(0)}$ are avoided (due to the presence of $\varsigma \delta^2,~~ \varsigma \delta^2 \ll 1$).
\item If $\Re (\varsigma)>0$ and $\Re (\varsigma)\sim \delta^{-2}$, then we can excite the eigenvalues of the Newtonian operator while the eigenvalues of the Magnetization operator $\mathbb{M}_{\mathrm{B}}^{(0)}$ are avoided (due to positivity).
\end{enumerate}
In both cases, the electric field will be enhanced.  As the permittivity $\varepsilon_p(\omega)$ follows
the Lorentz-model stated in (\ref{lorenz-model}), below, we show that we can choose the incident frequency $\omega$ and the damping frequency $\zeta$ so that $\varsigma$ behaves as in one of the situations described above. In the first case, we say that the nanoparticle behaves as a plasmonic one while in the second, it behaves as a Dielectric one. 
\begin{enumerate}
    \item If we choose the incidence frequency $\omega$ and the damping frequency $\zeta$ such that 
    \begin{align}\label{plas-frequency-incidence}
    \omega^2 = \omega_0^2 + \dfrac{\omega_\mathrm{p}^2\lambda_{\mathrm{n}_0}^{(3)}\varepsilon_\infty}{\lambda_{\mathrm{n}_0}^{(3)}(\varepsilon_\mathrm{m}-\varepsilon_\infty)-1} + \mathcal{O}(\delta^\mathrm{h}) \mbox{ and } \zeta\omega \sim \delta^\mathrm{h},
    \end{align}
    then $\Re (\varsigma)<0$. In addition, we have the following properties
    \begin{align}\label{plas}
        \Im{(\varepsilon_\mathrm{p})} \sim \delta^\mathrm{h}\; \text{and}\; |1 + \varsigma\lambda^{(3)}_{\mathrm{n}_{0}}| \sim \delta^\mathrm{h},\; \text{where},\;  \lambda^{(3)}_{\mathrm{n}_{0}}\; \text{is the eigen-value corresponding to}\; \mathrm{e}^{(3)}_{\mathrm{n}_{0}}\; \text{and}\; \mathrm{h}>0.
    \end{align}
    
    \item If the frequency of the incidence wave $\omega$ is chosen close to the undamped resonance frequency $\omega_0$ and the damping frequency $\zeta$ such that 
    \begin{align}\label{die2-incidence-frequency}
    \omega_0^2-\omega^2 \sim \delta^2\big(\overline{\lambda}_\mathrm{n_0}^{(1)}\mu_\mathrm{m}\omega_0^2\big)\big[1+\mathcal{O}( \delta^{\mathrm{h}})\big] \mbox{ and } \zeta\omega \sim \delta^{2-\mathrm{h}}\big(\overline{\lambda}_\mathrm{n_0}^{(\boldsymbol{\ell})}\mu_\mathrm{m}\omega_0^2\big)^2,
    \end{align}
    then then $\Re (\varsigma)>0$ with $\Re(\varsigma)\sim \delta^{-2}\big(\overline{\lambda}_\mathrm{n_0}^{(1)}\mu_\mathrm{m}\omega_0^2\big)^{-1}$ and $\Im(\varsigma) \sim \delta^{\mathrm{h}-2}\big(\overline{\lambda}_\mathrm{n_0}^{(1)}\mu_\mathrm{m}\omega_0^2\big)^{-1}$, where $\mathrm{h}>0$. Consequently, we have 
    \begin{align}\label{die2}
            |1 - \omega^2\mu_\mathrm{m}\varsigma\delta^2\lambda^{(1)}_{\mathrm{n}_{0}}| \sim \delta^\mathrm{h},\; \text{where},\;  \lambda^{(1)}_{\mathrm{n}_{0}}\; \text{is the eigen-value corresponding to}\; \mathrm{e}^{(1)}_{\mathrm{n}_{0}}\; \text{and}\; \mathrm{h}>0.
        \end{align}
      Since $\mathbb{H}_{0}(\textbf{div},0) = \textbf{curl}\Big(\mathbb{H}_0(\textbf{curl})\cap \mathbb{H}(\textbf{div},0)\Big),\; \text{we have}\; \mathrm{e}_\mathrm{n_0}^{(1)} = \textbf{curl}(\varphi_{\mathrm{n}_0})\; \text{with}\; \nu \times \varphi_{\mathrm{n}_0} =0\\ \text{and}\; \textbf{div}(\varphi_{\mathrm{n}_0})=0.$

\end{enumerate}

\noindent
\subsection{The results}
Now, we state the first result of this work.
\begin{theorem}\label{th13D}
Let a nanoparticle occupy a domain $\Omega = \delta \mathrm{B}+ \mathrm{z} \subseteq \mathbb{R}^3$ which is of class $\mathcal{C}^2$. 
\begin{enumerate}
    \item Plasmonic Case. If we choose the incidence frequency $\omega$ and the undamped frequency $\zeta$ satisfying (\ref{plas-frequency-incidence}), and hence (\ref{plas}),  we have the following approximation of the electric field with $\mathrm{E}$ as the solution to (\ref{Maxwell model}), as $\delta\to 0$,
    \begin{align}\label{t13d}
        \int_{\Omega}|\mathrm{E}|^2(\mathrm{y})d\mathrm{y} = \frac{1}{ |1 + \varsigma\lambda^{(3)}_{\mathrm{n}_{0}}|^2}\delta^3\Big|\mathrm{E}^{\textbf{in}}(\mathrm{z})\cdot\int_\mathrm{\mathrm{B}}\Tilde{\mathrm{e}}^{(3)}_{\mathrm{n}_{0}}(\mathrm{x})d\mathrm{x}\Big|^2 + \begin{cases}
            \mathcal{O}\big(\delta^{4-2\mathrm{h}}\big)\quad \text{for} \quad \mathrm{h}\in(0,\frac{3}{2}). \\[10pt]
            \mathcal{O}\big(\delta^{7-4\mathrm{h}}\big)\quad \text{for} \quad \mathrm{h}\in(\frac{3}{2},2).
        \end{cases} 
    \end{align}

     \item Dielectric Case. If we choose the incidence frequency $\omega$ and the undamped frequency $\zeta$ satisfying (\ref{die2-incidence-frequency}), and hence (\ref{die2}),  we have the following approximation of the electric field with $\mathrm{E}$ as the solution to (\ref{Maxwell model}), as $\delta\to 0$,
    \begin{align}\label{t23d}
    \int_{\Omega}|\mathrm{E}|^2(\mathrm{y})d\mathrm{y} = \frac{\omega^2\mu^2_\mathrm{m}}{ |1 - \omega^2\mu_\mathrm{m}\varsigma\delta^2\lambda^{(1)}_{\mathrm{n}_{0}}|^2}\delta^5\Big|\mathrm{H}^{\textbf{in}}(\mathrm{z})\cdot\int_\mathrm{\mathrm{B}}\Tilde{\varphi}_{\mathrm{n}_{0}}(\mathrm{x})d\mathrm{x}\Big|^2 +
    \begin{cases}
      \mathcal{O}\big(\delta^5\big) \quad \quad \; \text{for} \quad \mathrm{h}\in(0,1).\\[10pt] 
      \mathcal{O}\big(\delta^{9-4\mathrm{h}}\big) \quad \text{for} \quad \mathrm{h}\in(1,2).
    \end{cases} 
\end{align}
\end{enumerate}
\end{theorem}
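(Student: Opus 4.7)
The starting point is the scaled Lippmann--Schwinger equation (\ref{Lippmann-scaled-operator}). I would decompose the solution using the Helmholtz decomposition $(\mathbb{L}^2(\mathrm{B}))^3=\mathbb{H}_0(\textbf{div},0)\oplus\mathbb{H}_0(\textbf{curl},0)\oplus\nabla\mathbb{H}_{\textbf{arm}}$, writing $\tilde{\mathrm{E}}=\tilde{\mathrm{E}}^{(1)}+\tilde{\mathrm{E}}^{(2)}+\tilde{\mathrm{E}}^{(3)}$ with components in the three respective subspaces. A Taylor expansion $\mathbb{G}^{(\mathrm{k}\delta)}=\mathbb{G}^{(0)}+\mathcal{O}(\mathrm{k}\delta)$ of the kernel reduces the equation, up to controlled small errors, to the static operators $\mathbb{M}_\mathrm{B}^{(0)}$ and $\mathbb{N}_\mathrm{B}^{(0)}$, whose spectral data $(\lambda_\mathrm{n}^{(3)},\mathrm{e}_\mathrm{n}^{(3)})$ on $\nabla\mathbb{H}_{\textbf{arm}}$ and $(\lambda_\mathrm{n}^{(1)},\mathrm{e}_\mathrm{n}^{(1)})$ on $\mathbb{H}_0(\textbf{div},0)$ diagonalize the relevant blocks. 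Writing $\tilde{\mathrm{E}}^{\textbf{in}}(\tilde{\mathrm{x}})=\mathrm{E}^{\textbf{in}}(\mathrm{z})+\delta(\nabla\mathrm{E}^{\textbf{in}}(\mathrm{z}))\tilde{\mathrm{x}}+\mathcal{O}(\delta^2)$ and observing that the constant $\mathrm{E}^{\textbf{in}}(\mathrm{z})$ is the gradient of the harmonic (linear) function $\tilde{\mathrm{x}}\mapsto \mathrm{E}^{\textbf{in}}(\mathrm{z})\cdot\tilde{\mathrm{x}}$, so it lies purely in $\nabla\mathbb{H}_{\textbf{arm}}$, the projections of the incident field onto the other two subspaces start at order $\delta$.

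For the plasmonic case, the resonance $|1+\varsigma\lambda_{\mathrm{n}_0}^{(3)}|\sim\delta^\mathrm{h}$ combined with $|\omega^2\mu_\mathrm{m}\varsigma\delta^2|\ll 1$ makes $\tilde{\mathrm{E}}^{(3)}$ the dominant component. Expanding it in the eigenbasis $\tilde{\mathrm{E}}^{(3)}=\sum_\mathrm{n}\langle P_3\tilde{\mathrm{E}}^{\textbf{in}},\mathrm{e}_\mathrm{n}^{(3)}\rangle(1+\varsigma\lambda_\mathrm{n}^{(3)})^{-1}\mathrm{e}_\mathrm{n}^{(3)}$, only the mode $\mathrm{n}_0$ contributes at leading order: the non-resonant modes give $\mathcal{O}(1)$ while the resonant mode is of order $\mathcal{O}(\delta^{-\mathrm{h}})$. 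Using the identity $\langle\mathrm{E}^{\textbf{in}}(\mathrm{z}),\mathrm{e}_{\mathrm{n}_0}^{(3)}\rangle_\mathrm{B}=\mathrm{E}^{\textbf{in}}(\mathrm{z})\cdot\int_\mathrm{B}\tilde{\mathrm{e}}_{\mathrm{n}_0}^{(3)}$ together with the scaling $\int_\Omega|\mathrm{E}|^2=\delta^3\int_\mathrm{B}|\tilde{\mathrm{E}}|^2$, the leading term in (\ref{t13d}) follows.

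For the dielectric case, the strong enhancement $|\varsigma|\sim\delta^{-2}$ means $\varsigma\mathbb{M}_\mathrm{B}^{(0)}$ is large, so solvability forces $\tilde{\mathrm{E}}^{(3)}=\mathcal{O}(\delta^2)$; the component $\tilde{\mathrm{E}}^{(2)}$ is of order $\delta$ (controlled by $P_2\tilde{\mathrm{E}}^{\textbf{in}}$); and the dominant component is $\tilde{\mathrm{E}}^{(1)}$. The resonance $|1-\omega^2\mu_\mathrm{m}\varsigma\delta^2\lambda_{\mathrm{n}_0}^{(1)}|\sim\delta^\mathrm{h}$ isolates the single eigenmode $\mathrm{e}_{\mathrm{n}_0}^{(1)}=\textbf{curl}(\varphi_{\mathrm{n}_0})$. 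The crucial identity is the projection $\langle\delta(\nabla\mathrm{E}^{\textbf{in}}(\mathrm{z}))\tilde{\mathrm{x}},\textbf{curl}(\varphi_{\mathrm{n}_0})\rangle_\mathrm{B}=i\delta\omega\mu_\mathrm{m}\mathrm{H}^{\textbf{in}}(\mathrm{z})\cdot\int_\mathrm{B}\tilde{\varphi}_{\mathrm{n}_0}$, obtained by integration by parts (the boundary term vanishes since $\nu\times\varphi_{\mathrm{n}_0}=0$ on $\partial\mathrm{B}$) together with the pointwise identity $\textbf{Curl}(\nabla\mathrm{E}^{\textbf{in}}(\mathrm{z})\tilde{\mathrm{x}})=\textbf{Curl}\,\mathrm{E}^{\textbf{in}}(\mathrm{z})=i\omega\mu_\mathrm{m}\mathrm{H}^{\textbf{in}}(\mathrm{z})$ from (\ref{modifiedmaxwell}). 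Squaring, dividing by the resonant denominator, and multiplying by $\delta^3$ gives the $\delta^5$ leading term in (\ref{t23d}).

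The main obstacle is the error bookkeeping needed to produce the dichotomy $\mathcal{O}(\delta^{4-2\mathrm{h}})$ versus $\mathcal{O}(\delta^{7-4\mathrm{h}})$ (and its analogue in the dielectric case). Two competing sources of correction must be tracked simultaneously: the kernel expansion $\mathbb{G}^{(\mathrm{k}\delta)}-\mathbb{G}^{(0)}$, which couples the three Helmholtz blocks off-diagonally through $\mathbb{M}_\mathrm{B}^{(\mathrm{k}\delta)}-\mathbb{M}_\mathrm{B}^{(0)}$ and $\mathbb{N}_\mathrm{B}^{(\mathrm{k}\delta)}-\mathbb{N}_\mathrm{B}^{(0)}$, and the Taylor remainder of $\tilde{\mathrm{E}}^{\textbf{in}}$ around $\mathrm{z}$. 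Each contributes corrections that, after division by the resonant denominator of size $\delta^\mathrm{h}$, are amplified. For small $\mathrm{h}$ the dominant correction is of relative order $\delta$, coming from the linear pairing of the Taylor remainder against the leading resonant profile and yielding the absolute bound $\delta^{4-2\mathrm{h}}$; for larger $\mathrm{h}$ the amplification is strong enough that the quadratic resonant contribution $|\text{error}|^2\sim\delta^{4-4\mathrm{h}}$ dominates, yielding $\delta^{7-4\mathrm{h}}$. The crossover $4-2\mathrm{h}=7-4\mathrm{h}$ at $\mathrm{h}=3/2$ explains the split in (\ref{t13d}). Making these estimates quantitative on each subspace, with explicit operator-norm bounds, is the technical heart of the proof.
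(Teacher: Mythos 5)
Your plan follows essentially the same route as the paper's proof: the scaled Lippmann--Schwinger equation projected onto the three Helmholtz subspaces and their eigenbases, the resonant mode $\mathrm{n}_0$ singled out via (\ref{plas}) and (\ref{die2}), an a priori bound $\Vert\tilde{\mathrm{E}}\Vert_{\mathbb{L}^2(\mathrm{B})}\sim\delta^{-\mathrm{h}}$ (resp. $\delta^{1-\mathrm{h}}$) to control the kernel-expansion errors, the integration-by-parts identity $\langle\tilde{\mathrm{E}}^{\textbf{in}},\textbf{curl}(\tilde{\varphi}_{\mathrm{n}_0})\rangle=i\omega\mu_\mathrm{m}\delta\,\langle\tilde{\mathrm{H}}^{\textbf{in}},\varphi_{\mathrm{n}_0}\rangle$ in the dielectric case, and the same two competing error sources with crossover at $\mathrm{h}=\frac{3}{2}$. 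The one minor deviation is that where you bound the $\mathbb{H}_0(\textbf{curl},0)$ component of the field by $\mathcal{O}(\delta)$, the paper shows via the adjoint identity $\mathbb{M}^{(-\mathrm{k}\delta)}_\mathrm{B}(\tilde{\mathrm{e}}_\mathrm{n}^{(2)})=\omega^2\mu_\mathrm{m}\delta^2\mathbb{N}^{(-\mathrm{k}\delta)}_\mathrm{B}(\tilde{\mathrm{e}}_\mathrm{n}^{(2)})+\tilde{\mathrm{e}}_\mathrm{n}^{(2)}$ and the orthogonality of the divergence-free incident field to $\mathbb{H}_0(\textbf{curl},0)$ that this component vanishes exactly; your weaker bound still fits within the stated remainders, so this does not affect the result.
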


\noindent
We now state the main result of this work.
\begin{theorem}\label{mainth}
Let a nanoparticle, occupy a domain $\Omega = \mathrm{z} + \delta \mathrm{B}\subseteq\mathbb{R}^3$ which is of class $\mathcal{C}^2$, be such that its heat coefficients $(\rho_p, \mathrm{c}_\mathrm{p}, \gamma_\mathrm{p})$ satisfy the conditions (\ref{Assumption-heat-coefficients-section 3D}) and $\gamma_m <\sqrt{\overline{\gamma}_p\; \rho_{\mathrm{p}}\mathrm{c}_{\mathrm{p}} }, ~~ \delta \ll 1$. Let $\xi \in \mathbb{R}^3\setminus\overline{\Omega}$ such that $\textbf{dist}(\xi ,\Omega)\sim \delta^\mathrm{p}$ $\big(|\xi -z|\sim \delta^\mathrm{p}+\delta\big)$, where $\mathrm{p}\in [0,1)$. 
\begin{enumerate}
    
\item Under the assumption of Theorem \ref{th13D}(1), then for $\mathrm{r}<\frac{1}{2}$, if $2\mathrm{p}(1-\mathrm{r})<1 $, the heat generated by the \underline{plasmonic nanoparticle}, as a solution to (\ref{heattran}), is given by, as $\delta\to 0$,
\begin{align*} 
     \mathrm{U}_{\mathrm{e}}(\xi,\mathrm{t}) &= \frac{\omega \cdot \boldsymbol{\Im}(\varepsilon_\mathrm{p})}{8\pi^2\gamma_{\mathrm{m}}|\xi-\mathrm{z}|}\delta^{3-2\mathrm{h}}\Big|\mathrm{E}^{\textbf{in}}(\mathrm{z})\cdot\int_\mathrm{\mathrm{B}}\Tilde{\mathrm{e}}^{(3)}_{\mathrm{n}_{0}}(\mathrm{x})d\mathrm{x}\Big|^2 + 
     \begin{cases}
       \mathcal{O}(\delta^{4-\mathrm{h}-\mathrm{p}}) +  \mathcal{O}\Big(\delta^{4-\mathrm{h}-\mathrm{p}(3-2\mathrm{r})}\sqrt{\mathcal{K}^{(\mathrm{T_0})}_{\mathrm{r}}}\Big). \\[10pt] 
       \mathcal{O}(\delta^{7-3\mathrm{h}-\mathrm{p}}) +  \mathcal{O}\Big(\delta^{4-\mathrm{h}-\mathrm{p}(3-2\mathrm{r})}\sqrt{\mathcal{K}^{(\mathrm{T_0})}_{\mathrm{r}}}\Big).
     \end{cases}
\end{align*}

\item Under the assumption of Theorem \ref{th13D}(2) and $\boldsymbol{\Im}(\varepsilon_\mathrm{p}) \sim \delta^{\mathrm{h}-2},\; \delta\ll 1.$, then for $\mathrm{r}<\frac{1}{2}$, if $2\mathrm{p}(1-\mathrm{r})<\mathrm{h}$, the heat generated by the \underline{dielectric nanoparticle}, as a solution to (\ref{heattran}), is given by, as $\delta\to 0$,
\begin{align*}
       \mathrm{U}_{\mathrm{e}}(\xi,\mathrm{t}) &= \frac{\omega^3\mu^2_\mathrm{m} \cdot \boldsymbol{\Im}(\varepsilon_\mathrm{p})}{8\pi^2\gamma_{\mathrm{m}}|\xi-\mathrm{z}|}\delta^{5-2\mathrm{h}}\Big|\mathrm{H}^{\textbf{in}}(\mathrm{z})\cdot\int_\mathrm{\mathrm{B}}\Tilde{\varphi}_{\mathrm{n}_{0}}(\mathrm{x})d\mathrm{x}\Big|^2 + 
       \begin{cases}
         \mathcal{O}(\delta^{3+\mathrm{h}-\mathrm{p}})+ \mathcal{O}\Big(\delta^{3-\mathrm{p}(3-2\mathrm{r})}\sqrt{\mathcal{K}^{(\mathrm{T_0})}_{\mathrm{r}}}\Big). \\[10pt]
         \mathcal{O}(\delta^{7-3\mathrm{h}-\mathrm{p}})+ \mathcal{O}\Big(\delta^{3-\mathrm{p}(3-2\mathrm{r})}\sqrt{\mathcal{K}^{(\mathrm{T_0})}_{\mathrm{r}}}\Big).
       \end{cases}
\end{align*}
   where, $\displaystyle\mathcal{K}^{(\mathrm{T_0})}_{\mathrm{r}}:=\sup_{\mathrm{t}\in (0, \mathrm{T}_0)}\int^{\mathrm{T}_0}_0\frac{1}{(\mathrm{t}-\tau)^{2\mathrm{r}}}d\tau$ and it makes sense if $\mathrm{r}< \frac{1}{2}$.
\end{enumerate}

\end{theorem}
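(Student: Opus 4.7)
My plan follows the time-domain integral-equation and asymptotic-analysis template of \cite{AM}, adapted to three spatial dimensions and fed with the electric-field expansions provided by Theorem \ref{th13D}. Setting $F(y,\tau):=\frac{\omega}{2\pi\gamma_p}\boldsymbol{\Im}(\varepsilon_p)|\mathrm{E}(y)|^2\chi_{(0,\mathrm{T}_0)}(\tau)$, I represent the solution of (\ref{heattran}) via heat volume and layer potentials built from $\Phi^{\mathrm{i},\mathrm{e}}$,
\[
\mathrm{U}_\mathrm{i} = \mathcal{N}^\mathrm{i}[F] + \mathcal{S}^\mathrm{i}_{\partial\Omega}[\phi] - \mathcal{D}^\mathrm{i}_{\partial\Omega}[\psi], \qquad \mathrm{U}_\mathrm{e} = \mathcal{D}^\mathrm{e}_{\partial\Omega}[\tilde\psi] - \mathcal{S}^\mathrm{e}_{\partial\Omega}[\tilde\phi],
\]
and impose the transmission conditions in (\ref{heattran}) through the standard jump relations of heat layer potentials. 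This yields a coupled $4\times 4$ boundary-integral system on $\partial\Omega\times(0,\mathrm{T}_0)$ for $(\phi,\psi,\tilde\phi,\tilde\psi)$ whose right-hand side consists of the Dirichlet and conormal traces of $\mathcal{N}^\mathrm{i}[F]$.

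I then rescale $y\mapsto\tilde y=(y-\mathrm{z})/\delta$ and $\tau\mapsto\tilde\tau=\tau/\delta^2$ (parabolic scaling) and use the coefficient scales (\ref{Assumption-heat-coefficients-section 3D}) together with the hypothesis $\gamma_m<\sqrt{\overline{\gamma}_p\rho_p \mathrm{c}_\mathrm{p}}$ to obtain a rescaled system that is uniformly invertible as $\delta\to 0$. The only obstacle to classical regularity is the time cutoff $\chi_{(0,\mathrm{T}_0)}$, which enters through $\mathcal{N}^\mathrm{i}[F]$ and is only H\"older-in-time of order strictly less than $\tfrac12$; this forces the use of fractional-in-time Sobolev norms of index $\mathrm{r}<\tfrac12$, and the associated convolution with the heat kernel in this fractional setting is exactly what produces the factor $\sqrt{\mathcal{K}^{(\mathrm{T}_0)}_\mathrm{r}}$ in the error terms. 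With this framework the densities $\tilde\phi,\tilde\psi$ are controlled by $\|\mathcal{N}^\mathrm{i}[F]\|$-type norms, themselves controlled by the electric-field expansions of Theorem \ref{th13D}.

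Once the densities are estimated, the exterior representation is expanded at the observation point $\xi$. For $y\in\Omega$ we have $|\xi-y|=|\xi-\mathrm{z}|\bigl(1+\mathcal{O}(\delta^{1-\mathrm{p}})\bigr)$, so the kernel $\Phi^\mathrm{e}(\xi-y,t-\tau)$ is a controlled perturbation of $\Phi^\mathrm{e}(\xi-\mathrm{z},t-\tau)$. The time convolution gives, in the regime of interest, $\int_0^{\mathrm{T}_0}\Phi^\mathrm{e}(|\xi-\mathrm{z}|,t-\tau)\,d\tau=\frac{1}{4\pi|\xi-\mathrm{z}|}+\text{(small)}$ via the well-known identity $\int_0^\infty\Phi^\mathrm{e}(r,s)\,ds=\frac{1}{4\pi r}$. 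Combining the volume-potential contribution with the boundary-layer contribution coming from the flux condition $\gamma_\mathrm{p}\bm{\mathrm{D}^-_\nu}\mathrm{U}_\mathrm{i}=\gamma_m\bm{\mathrm{D}^+_\nu}\mathrm{U}_\mathrm{e}$ — which is precisely what converts the $\gamma_p^{-1}$ in $F$ into the physically correct $\gamma_m^{-1}$ seen from outside — yields
\[
\mathrm{U}_\mathrm{e}(\xi,t) = \frac{\omega\,\boldsymbol{\Im}(\varepsilon_p)}{8\pi^2\gamma_m|\xi-\mathrm{z}|}\int_\Omega|\mathrm{E}|^2(y)\,dy + R(\xi,t).
\]
Substituting the plasmonic (resp.\ dielectric) asymptotics for $\int_\Omega|\mathrm{E}|^2$ from Theorem \ref{th13D} produces the stated $\delta^{3-2\mathrm{h}}$ (resp.\ $\delta^{5-2\mathrm{h}}$) leading coefficient, with the $|\mathrm{H}^{\textbf{in}}(\mathrm{z})\cdot\int_\mathrm{B}\tilde\varphi_{\mathrm{n}_0}|^2$ factor in the dielectric case arising from the identification $\mathrm{e}^{(1)}_{\mathrm{n}_0}=\textbf{curl}\,\varphi_{\mathrm{n}_0}$ together with integration by parts against $\mathrm{E}^{\textbf{in}}$ and the Maxwell relation $\textbf{curl}\,\mathrm{E}^{\textbf{in}}=i\omega\mu_m\mathrm{H}^{\textbf{in}}$.

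The main technical obstacle lies in the remainder $R(\xi,t)$, where three error mechanisms interact: the Taylor expansion of $\Phi^\mathrm{e}$ about $\mathrm{z}$ (yielding powers of the geometric factor $\delta^{1-\mathrm{p}}$), the fractional-in-time boundary-density bound (yielding the combination $\delta^{\cdots-\mathrm{p}(3-2\mathrm{r})}\sqrt{\mathcal{K}^{(\mathrm{T}_0)}_\mathrm{r}}$), and the subleading electric-field corrections from Theorem \ref{th13D}. Comparing each with the leading order $\delta^{3-2\mathrm{h}}$ (resp.\ $\delta^{5-2\mathrm{h}}$) and requiring smallness yields exactly the scale constraint $2\mathrm{p}(1-\mathrm{r})<1$ (resp.\ $<\mathrm{h}$) and the case-splits in $\mathrm{h}$ appearing in the statement. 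Maintaining this error book-keeping uniformly in $\delta$ and in $t\in(0,\mathrm{T}_0)$, while keeping the boundary system invertible in the high-contrast regime $\gamma_\mathrm{p}\sim\delta^{-2}$ and $|\xi-\mathrm{z}|\sim\delta^{\mathrm{p}}$, is the most delicate part of the argument.
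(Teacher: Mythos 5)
Your route is essentially the paper's: a time-domain boundary-integral formulation (the paper works with the reduced $2\times 2$ system of \cite{AM} for the interior traces $\bm{\mathrm{D}^-_0}\mathrm{U}_{\mathrm{i}}$, $\bm{\mathrm{D}^-_\nu}\mathrm{U}_{\mathrm{i}}$ rather than your $4\times 4$ system in four densities, but these are equivalent formulations), parabolic rescaling under (\ref{Assumption-heat-coefficients-section 3D}) with the condition $\gamma_m<\sqrt{\overline{\gamma}_p\,\rho_{\mathrm{p}}\mathrm{c}_{\mathrm{p}}}$ guaranteeing invertibility, replacement of $\Phi^{\textbf{e}}(\xi,t;\mathrm{y},\tau)$ by $\Phi^{\textbf{e}}(\xi,t;\mathrm{z},\tau)$ in the exterior representation, the computation $\int_0^t\Phi^{\textbf{e}}(\xi,t;\mathrm{z},\tau)\,d\tau=\frac{\alpha_\mathrm{m}}{4\pi|\xi-\mathrm{z}|}+\mathcal{O}(1)$ via the error-function expansion, the cancellation of $\gamma_\mathrm{p}$ through the flux transmission condition, and substitution of the expansions of Theorem \ref{th13D}. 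Your attribution of $\sqrt{\mathcal{K}^{(\mathrm{T_0})}_{\mathrm{r}}}$ to fractional-in-time norms is consistent with the paper's mechanism, where it arises from the singular kernel estimates (\ref{singularities}) with exponent $\mathrm{r}<\frac12$ evaluated at distance $|\xi-\mathrm{z}|\sim\delta^{\mathrm{p}}$, producing the $\delta^{-\mathrm{p}(3-2\mathrm{r})}$ factors.

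There is, however, one genuine gap. All of the error terms (and the a priori bounds on the boundary unknowns, cf.\ Proposition \ref{heatapriori}) are expressed through $\big\Vert|\mathrm{E}|^2\big\Vert_{\mathrm{L}^2(\Omega)}=\Vert\mathrm{E}\Vert^2_{\mathbb{L}^4(\Omega)}$, since the heat source must lie in $\mathrm{L}^2$. You claim these norms are ``controlled by the electric-field expansions of Theorem \ref{th13D}'', but that theorem only gives the $\mathbb{L}^2(\Omega)$ asymptotics of $\mathrm{E}$, and an $\mathbb{L}^4$ bound does not follow from an $\mathbb{L}^2$ bound. The paper devotes a separate section (Proposition \ref{e12}, proved in Section \ref{e23D}) to exactly this point: writing $\mathbb{M}^{(\mathrm{k})}[\mathrm{E}]=\nabla\mathcal{S}^{(\mathrm{k})}[\nu\cdot\mathrm{E}]$ using $\nabla\cdot\mathrm{E}=0$, taking the Dirichlet trace and using the jump relations and the invertibility of $\big(1+\frac{\varsigma}{2}\big)\mathrm{I}-\varsigma\mathcal{K}^*_\mathrm{B}$ (which requires that the excited eigenvalues $\lambda^{(3)}_{\mathrm{n}}$ stay away from $\frac12$) to bound $\Vert\nu\cdot\Tilde{\mathrm{E}}\Vert_{\mathbb{H}^{1/2}(\partial\mathrm{B})}$, then invoking the Calder\'on--Zygmund inequality to get $\Vert\Tilde{\mathrm{E}}\Vert_{\mathbb{H}^1(\mathrm{B})}$, and finally Gagliardo--Nirenberg interpolation $\Vert\Tilde{\mathrm{E}}\Vert_{\mathbb{L}^4(\mathrm{B})}\lesssim\Vert\Tilde{\mathrm{E}}\Vert^{1/2}_{\mathbb{L}^2(\mathrm{B})}\Vert\Tilde{\mathrm{E}}\Vert^{1/2}_{\mathbb{H}^1(\mathrm{B})}$, yielding $\big\Vert|\mathrm{E}|^2\big\Vert_{\mathrm{L}^2(\Omega)}\sim\delta^{\frac32-2\mathrm{h}}$ (plasmonic) and $\sim\delta^{\frac52-\mathrm{h}}$ (dielectric). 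Without this $\mathbb{L}^4$ a priori estimate your remainder book-keeping cannot be quantified in powers of $\delta$, and the stated exponents $4-\mathrm{h}-\mathrm{p}(3-2\mathrm{r})$ and $3-\mathrm{p}(3-2\mathrm{r})$, as well as the case splits in $\mathrm{h}$, cannot be derived; you would need to add this step (or an equivalent regularity argument) for the proof to close.
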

\noindent
We end this section with a few comments regarding the results presented in the previous theorems. 
\begin{enumerate}

\item The expressions $\displaystyle\Big|\mathrm{E}^{\textbf{in}}(\mathrm{z})\cdot\int_\mathrm{\mathrm{B}}\Tilde{\mathrm{e}}^{(3)}_{\mathrm{n}_{0}}(\mathrm{x})d\mathrm{x}\Big|^2$ and $\displaystyle\Big|\mathrm{H}^{\textbf{in}}(\mathrm{z})\cdot\int_\mathrm{\mathrm{B}}\Tilde{\mathrm{\varphi}}_{\mathrm{n}_{0}}(\mathrm{x})d\mathrm{x}\Big|^2$ should be understood as $\displaystyle\sum_{m}\Big|\mathrm{E}^{\textbf{in}}(\mathrm{z})\cdot\int_\mathrm{\mathrm{B}}\Tilde{\mathrm{e}}^{(3)}_{\mathrm{n}_{0}, m}(\mathrm{x})d\mathrm{x}\Big|^2$ and $\displaystyle\sum_{m}\Big|\mathrm{H}^{\textbf{in}}(\mathrm{z})\cdot\int_\mathrm{\mathrm{B}}\Tilde{\mathrm{\varphi}}^{(3)}_{\mathrm{n}_{0}, \mathrm{m}}(\mathrm{x})d\mathrm{x}\Big|^2$ where, for the fixed index $\mathrm{n}_0$, $\Tilde{\mathrm{e}}^{(3)}_{\mathrm{n}_{0}, \mathrm{m}}$ and $\Tilde{\mathrm{e}}^{(1)}_{\mathrm{m}}$, with $\Tilde{\mathrm{e}}^{(1)}_{\mathrm{n}_{0}, \mathrm{m}}:=\textbf{curl} (\varphi_{\mathrm{n}_0, \mathrm{m}})$, span the eigen-space corresponding to the eigenvalues $\lambda^{(3)}_{\mathrm{n}_0}$ and $\lambda^{(1)}_{\mathrm{n}_0}$ respectively. Observe that the approximate expansions provided in the two theorems above make sense only if the terms $\displaystyle\Big|\mathrm{E}^{\textbf{in}}(\mathrm{z})\cdot\int_\mathrm{\mathrm{B}}\Tilde{\mathrm{e}}^{(3)}_{\mathrm{n}_{0}}(\mathrm{x})d\mathrm{x}\Big|^2$ and $\displaystyle\Big|\mathrm{H}^{\textbf{in}}(\mathrm{z})\cdot\int_\mathrm{\mathrm{B}}\Tilde{\mathrm{\varphi}}_{\mathrm{n}_{0}}(\mathrm{x})d\mathrm{x}\Big|^2$ are not vanishing. In \cite{Ahcene-Mourad-JDE}, it is shown that for a sphere-shaped $\mathrm{B}$, we have $\displaystyle\Big|\mathrm{E}^{\textbf{in}}(\mathrm{z})\cdot\int_\mathrm{\mathrm{B}}\Tilde{\mathrm{e}}^{(3)}_{\mathrm{n}_{0}}(\mathrm{x})d\mathrm{x}\Big|^2=\mathrm{C}^\mathrm{t}\Big|\mathrm{E}^{\textbf{in}}(\mathrm{z})\Big|^2$ with a positive constant $\mathrm{C}^\mathrm{t}$. Therefore it is not vanishing.

\item The leading order terms in Theorem \ref{mainth} are given by $\frac{\omega \cdot \boldsymbol{\Im}(\varepsilon_\mathrm{p})}{8\pi^2\gamma_{\mathrm{m}}|\xi-\mathrm{z}|}\delta^{3-2\mathrm{h}}$ and $\frac{\omega^3\mu^2_\mathrm{m} \cdot \boldsymbol{\Im}(\varepsilon_\mathrm{p})}{8\pi^2\gamma_{\mathrm{m}}|\xi-\mathrm{z}|}\delta^{5-2\mathrm{h}}$ respectively. Therefore,  by selecting $\mathrm{h}$ close 2, the generated heat can be increased to any desired amount at a distance of the order $\delta$ from the nanoparticle while it decreases away from it. We further highlight that to adjust the dominant terms to any desired temperature, one requires knowledge of both the surrounding medium and optical properties of the nanoparticle.
Such features are useful for the purpose of therapy using heat.

\end{enumerate}

\noindent
The remaining parts of the work are structured as follows.  In Sec. \ref{plasmonicnanoth} and Sec. \ref{dielectricnanoth}, the proofs of Theorem \ref{th13D}(1) and Theorem \ref{th13D}(2), which deal with the asymptotic expansions of the Electric field, used to create plasmonic as well as dielectric resonances, are provided. In Sec \ref{heatmainth}, we give the proof of Theorem \ref{mainth}, which is the main finding of this work i.e. to provide an asymptotic expansion of the generated heat. In Sec. \ref{e23D}, we provide the justifications for some claimed a priori estimates. Finally, in Appendix \ref{appen}, we present a few technical estimates used in the prior sections.
\bigskip

\noindent
Unless specified, in this paper, we indicate $'\le'$ with its right-hand side multiplied by a general positive constant by the notation $'\lesssim'$.


\section{Proof of Theorem \ref{th13D}}
The proof is based on the Lippmann-Schwinger system of equations.  First, we note that the Lippmann-Schwinger equation stated in (\ref{Lippmann}) consists of Newtonian and Magnetization operators. Second, as we are using Lorentzian nanoparticles, plasmonic and dielectric resonant frequencies enable us to perform the approximations. We show that the field corresponding to the Magnetization operator is the dominant one in equation (\ref{t13d}) when we choose the incidence frequency close to plasmonic frequency and the field corresponding to the Newtonian operator is the dominant one in equation (\ref{t23d}) when we choose the incidence frequency close to dielectric resonance. In order to avoid confusion, we separated the proofs into Theorem \ref{th13D}(1) and Theorem \ref{th13D}(2).

\subsection{Proof of Theorem \ref{th13D}(1)}\label{plasmonicnanoth}
In this section, we describe the asymptotic analysis of the solution to (\ref{Maxwell model}) as $\delta\to 0$ when a \underline{plasmonic nanoparticle} occupy the domain $\Omega = \delta\mathrm{B}+\mathrm{z}.$
\\

\noindent
We begin by stating the Lippmann-Schwinger equation, given below as the solution to the electromagnetic scattering problem, (\ref{Maxwell model})
\begin{align}
    \mathrm{E}(\mathrm{x}) - (\varepsilon_\mathrm{p}-\varepsilon_\mathrm{m})\int_\Omega \Upsilon^{(\mathrm{k})}(\mathrm{x},\mathrm{y})\cdot \mathrm{E}(\mathrm{y}) d\mathrm{y} = \mathrm{E}^\textbf{in}(\mathrm{x}), \quad \quad \mathrm{x} \in \Omega,
\end{align}
where, $\Upsilon^{(\mathrm{k})}(\mathrm{x},\mathrm{y}): = \underset{\mathrm{x}}{\textbf{Hess}}\;\mathbb{G}^{(\mathrm{k})}(\mathrm{x},\mathrm{y}) + \omega^2\mu_\mathrm{m}\mathbb{G}^{(\mathrm{k})}(\mathrm{x},\mathrm{y})\mathrm{I}$ is the corresponding dyadic Green's function and $\mathbb{G}^{(\mathrm{k})}(\mathrm{x},\mathrm{y})$ is the Green's function for the Helmholtz Operator. Let us also denote $\varsigma:= \varepsilon_\mathrm{p}-\varepsilon_\mathrm{m}.$
\bigbreak 
\noindent
From the definition of dyadic Green's function, we rewrite integral equations representation 
\begin{align} \label{Lipmann}
    \mathrm{E}(\mathrm{x}) + \varsigma \; \mathbb{M}^{(\mathrm{k})}\big[\mathrm{E}\big](\mathrm{x}) - \omega^2\mu_\mathrm{m}\varsigma \; \mathbb{N}^{(\mathrm{k})}\big[\mathrm{E}\big](\mathrm{x}) = \mathrm{E}^\textbf{in}(\mathrm{x}),
\end{align}
where, we recall the magnetization operator and the Newtonian operator
\begin{equation}\label{magnetic operator3D}
   \mathbb{M}^{(\mathrm{k})}\big[\mathrm{E}\big](\mathrm{x}) = \nabla \int_{\Omega}  \nabla\mathbb{G}^{(\mathrm{k})}(\mathrm{x},\mathrm{y})\cdot\mathrm{E}(\mathrm{y})d\mathrm{y}
\\ \quad \text{and} \quad
   \mathbb{N}^{(\mathrm{k})}\big[\mathrm{E}\big](\mathrm{x}) = \int_{\Omega}  \mathbb{G}^{(\mathrm{k})}(\mathrm{x},\mathrm{y})\mathrm{E}(\mathrm{y})d\mathrm{y}.
\end{equation}
Then, the magnetization as well as Newtonian potentials can be decomposed as follows
\begin{align}
   \mathbb{M}^{(\mathrm{k})}\big[\mathrm{E}\big](\mathrm{x}) \nonumber&=  \mathbb{M}^{(\mathrm{0})}\big[\mathrm{E}\big](\mathrm{x}) + \frac{\omega^2\mu_\mathrm{m}}{2} \mathbb{N}^{(\mathrm{0})}\big[\mathrm{E}\big](\mathrm{x}) - \frac{i\omega^3\mu^2_\mathrm{m}}{12\pi}\int_\Omega \mathrm{E}(\mathrm{y})d\mathrm{y} + \frac{\omega^2\mu_\mathrm{m}}{2} \int_\Omega \mathbb{G}^{(\mathrm{0})}(\mathrm{x},\mathrm{y})\frac{\mathrm{A}(\mathrm{x},\mathrm{y})\cdot\mathrm{E}(\mathrm{y})}{\Vert \mathrm{x}-\mathrm{y}\Vert^2}d\mathrm{y}
   \\ &- \frac{1}{4\pi}\sum_{\mathrm{j}\ge 3} \frac{(i\omega\mu^\frac{1}{2}_\mathrm{m})^{\mathrm{j}+1}}{(\mathrm{j}+1)!} \underset{\mathrm{x}}{\textbf{Hess}}(\Vert \mathrm{x}-\mathrm{y}\Vert^\mathrm{j}), 
\end{align}
where $\mathrm{A}(\mathrm{x},\mathrm{y}):= (\mathrm{x}-\mathrm{y})\otimes (\mathrm{x}-\mathrm{y})$ and
\begin{align}\label{newton3d}
    \mathbb{N}^{(\mathrm{k})}\big[\mathrm{E}\big](\mathrm{x}) &=  \mathbb{N}^{(\mathrm{0})}\big[\mathrm{E}\big](\mathrm{x}) + \frac{i\omega\mu^\frac{1}{2}_\mathrm{m}}{4\pi}\int_\Omega \mathrm{E}(\mathrm{y})d\mathrm{y} + \frac{1}{4\pi}\sum_{\mathrm{j}\ge 1} \frac{(i\omega\mu^\frac{1}{2}_\mathrm{m})^{\mathrm{j}+1}}{(\mathrm{j}+1)!} \int_\Omega \Vert \mathrm{x}-\mathrm{y}\Vert^\mathrm{j} \mathrm{E}(\mathrm{y})d\mathrm{y}.
\end{align}
Let us recall the following decomposition of the space $\big(\mathbb{L}^{2}(\Omega)\big)^3$  into the following three sub-spaces as a direct sum as follows:
\begin{equation}\label{decomposition}
   \big(\mathbb{L}^{2}(\Omega)\big)^3 = \mathbb{H}_{0}(\textbf{div},0) \oplus\mathbb{H}_{0}(\textbf{curl},0)\oplus \nabla \mathbb{H}_{\textbf{arm}},
\end{equation}
where we define these three sub-spaces as follows:
\begin{align}{\label{eq:subspaces3D}}
    \begin{cases}
        \mathbb{H}_{0}(\textbf{div},0) = \Big\{ u\in \mathbb{H}(\textbf{div},\Omega): \textbf{div}\;u =0\;\text{in}\; \Omega \; \text{and}\;   u\cdot \nu = 0 \; \text{on}\; \partial\Omega\Big\}, \\
        \mathbb{H}_{0}(\textbf{curl},0) = \Big\{ u\in \mathbb{H}(\textbf{curl},\Omega): \textbf{curl}\;u =0\;\text{in}\; \Omega \; \text{and}\; u\times \nu = 0 \; \text{on}\; \partial\Omega\Big\}, \; \text{and} \\
        \nabla \mathbb{H}_{\textbf{arm}} = \Big\{ u \in \big(\mathbb{L}^{2}(\Omega)\big)^3: \exists \  \varphi \ \text{s.t.} \ u = \nabla \varphi,\; \varphi\in \mathbb{H}^1(\Omega)\; \text{and} \ \Delta \varphi = 0 \Big\}.
    \end{cases}
\end{align}
We also know that the Magnetization operator is self-adjoint and bounded, which satisfies the followings
\begin{align}
    \mathbb{M}^{(0)}\Big|_{\mathbb{H}_{0}(\textbf{div},0)} = 0, \quad \text{and} \quad \mathbb{M}^{(0)}\Big|_{\mathbb{H}_{0}(\textbf{curl},0)} = \mathrm{I}.
\end{align}
From the decomposition (\ref{decomposition}), we define $\overset{1}{\mathbb{P}}, \overset{2}{\mathbb{P}}$ and $\overset{3}{\mathbb{P}}$ to be the natural projectors as follows
\begin{align}
\overset{1}{\mathbb{P}}:= \mathbb{L}^2 \to \mathbb{H}_{0}(\textbf{div},0), \;   \overset{2}{\mathbb{P}}:= \mathbb{L}^2 \to \mathbb{H}_{0}(\textbf{curl},0), \; \text{and}\; \overset{3}{\mathbb{P}}:= \mathbb{L}^2 \to \nabla \mathbb{H}_{\textbf{arm}}. 
\end{align}
We also know that the  Magnetization operator $\mathbb{M}^{(0)}: \nabla \mathbb{H}_{\text{arm}}\rightarrow \nabla \mathbb{H}_{\text{arm}}$ induces a complete orthonormal basis namely $\big(\lambda^{(3)}_{\mathrm{n}},\mathrm{e}^{(3)}_{\mathrm{n}}\big)_{\mathrm{n} \in \mathbb{N}}$. Also $\mathbb{N}\Big|_{\mathbb{H}_{0}(\textbf{div},0)}$ and $\mathbb{N}\Big|_{\mathbb{H}_{0}(\textbf{curl},0)}$ generate complete orthonormal bases $\big(\lambda^{(1)}_{\mathrm{n}},\mathrm{e}^{(1)}_{\mathrm{n}}\big)_{\mathrm{n} \in \mathbb{N}}$ and $\big(\lambda^{(2)}_{\mathrm{n}},\mathrm{e}^{(2)}_{\mathrm{n}}\big)_{\mathrm{n} \in \mathbb{N}}$ of $\mathbb{H}_{0}(\textbf{div},0)$ and $\mathbb{H}_{0}(\textbf{curl},0)$ respectively. Due to the scale-invariance of the magnetization operator, we rewrite the integral representation given above in the scaled domain $\mathrm{B}$ to obtain
\begin{align}\label{scaleq}
    \Tilde{\mathrm{E}}(\mathrm{\xi}) + \varsigma \; \mathbb{M}^{(\mathrm{k}\delta)}_\mathrm{B}\big[\Tilde{\mathrm{E}}\big](\mathrm{\xi}) - \omega^2\mu_\mathrm{m}\varsigma \delta^2 \; \mathbb{N}_\mathrm{B}^{(\mathrm{k}\delta)}\big[\Tilde{\mathrm{E}}\big](\mathrm{\xi}) = \Tilde{\mathrm{E}}^\textbf{in}(\mathrm{\xi}).
\end{align}
The aforementioned equation will be considered in each of the sub-spaces indicated in (\ref{eq:subspaces3D}). We start with $\mathbb{H}_{0}(\textbf{div},0)$.
\begin{enumerate}
    \item We consider the inner-product with respect to $\mathrm{e}_\mathrm{n}^{(1)}$ to obtain
    \begin{align*}
        \big\langle \Tilde{\mathrm{E}}; \tilde{\mathrm{e}}_\mathrm{n}^{(1)}\big\rangle + \varsigma \big\langle \mathbb{M}^{(\mathrm{k}\delta)}_\mathrm{B}\big[\Tilde{\mathrm{E}}\big]; \tilde{\mathrm{e}}_\mathrm{n}^{(1)}\big\rangle = \big\langle \Tilde{\mathrm{E}}^\textbf{in}; \tilde{\mathrm{e}}_\mathrm{n}^{(1)}\big\rangle + \omega^2\mu_\mathrm{m}\varsigma \delta^2 \big\langle \mathbb{N}^{(\mathrm{k}\delta)}_\mathrm{B}\big[\Tilde{\mathrm{E}}\big]; \tilde{\mathrm{e}}_\mathrm{n}^{(1)}\big\rangle
    \end{align*}
    As $\mathbb{M}^{(\mathrm{k}\delta)}_\mathrm{B}$ has vanishing property in $\mathbb{H}_{0}(\textbf{div},0)$ we obtain
    \begin{align*}
        \big\langle \Tilde{\mathrm{E}}; \tilde{\mathrm{e}}_\mathrm{n}^{(1)}\big\rangle = \big\langle \Tilde{\mathrm{E}}^\textbf{in}; \tilde{\mathrm{e}}_\mathrm{n}^{(1)}\big\rangle + \omega^2\mu_\mathrm{m}\varsigma \delta^2 \big\langle \mathbb{N}^{(\mathrm{k}\delta)}_\mathrm{B}\big[\Tilde{\mathrm{E}}\big]; \tilde{\mathrm{e}}_\mathrm{n}^{(1)}\big\rangle.
    \end{align*}
    Consequently, we derive
     \begin{align}
   \big\Vert \overset{1}{\mathbb{P}}(\mathrm{\Tilde{E}})\big\Vert^2_{\mathbb{L}^2(\mathrm{B})}  = \sum_\mathrm{n} \big|\big\langle \Tilde{\mathrm{E}}^\textbf{in}; \tilde{\mathrm{e}}_\mathrm{n}^{(1)}\big\rangle\big|^2 + (\omega^2\mu_\mathrm{m})^2\varsigma^2 \delta^4\sum_\mathrm{n}\big|\big\langle \mathbb{N}^{(\mathrm{k}\delta)}_\mathrm{B}\big[\Tilde{\mathrm{E}}\big]; \tilde{\mathrm{e}}_\mathrm{n}^{(1)}\big\rangle\big|^2
 \end{align}
    
    \item Next, we consider the sub-space $\mathbb{H}_{0}(\textbf{curl},0)$ and we take the inner-product with respect to $\mathrm{e}_\mathrm{n}^{(2)}$ to obtain 
    \begin{align*}
      \big\langle \Tilde{\mathrm{E}}; \tilde{\mathrm{e}}_\mathrm{n}^{(2)}\big\rangle + \varsigma \big\langle \mathbb{M}^{(\mathrm{0})}_\mathrm{B}\big[\Tilde{\mathrm{E}}\big]; \tilde{\mathrm{e}}_\mathrm{n}^{(2)}\big\rangle = \big\langle \Tilde{\mathrm{E}}^\textbf{in}; \tilde{\mathrm{e}}_\mathrm{n}^{(2)}\big\rangle + \omega^2\mu_\mathrm{m}\varsigma \delta^2 \big\langle \mathbb{N}^{(\mathrm{k}\delta)}_\mathrm{B}\big[\Tilde{\mathrm{E}}\big]; \tilde{\mathrm{e}}_\mathrm{n}^{(2)}\big\rangle + \varsigma \Big\langle \big(\mathbb{M}^{(\mathrm{k}\delta)}_\mathrm{B}-\mathbb{M}^{(\mathrm{0})}_\mathrm{B}\big)\big[\Tilde{\mathrm{E}}\big]; \tilde{\mathrm{e}}_\mathrm{n}^{(2)}\Big\rangle 
    \end{align*}
    As $\mathbb{M}^{(\mathrm{0})}_\mathrm{B}\Big|_{\mathbb{H}_{0}(\textbf{curl},0)} = \mathrm{I}$, we derive
    \begin{align*}
        (1+\varsigma)\big\langle \Tilde{\mathrm{E}}; \tilde{\mathrm{e}}_\mathrm{n}^{(2)}\big\rangle = \big\langle \Tilde{\mathrm{E}}^\textbf{in}; \tilde{\mathrm{e}}_\mathrm{n}^{(2)}\big\rangle + \omega^2\mu_\mathrm{m}\varsigma \delta^2 \big\langle \mathbb{N}^{(\mathrm{k}\delta)}_\mathrm{B}\big[\Tilde{\mathrm{E}}\big]; \tilde{\mathrm{e}}_\mathrm{n}^{(2)}\big\rangle +  \varsigma \Big\langle \big(\mathbb{M}^{(\mathrm{k}\delta)}_\mathrm{B}-\mathbb{M}^{(\mathrm{0})}_\mathrm{B}\big)\big[\Tilde{\mathrm{E}}\big]; \tilde{\mathrm{e}}_\mathrm{n}^{(2)}\Big\rangle.
    \end{align*}
    Moreover, we express the above equation as follows
   \begin{align}
   \big\Vert \overset{2}{\mathbb{P}}(\mathrm{\Tilde{E}})\big\Vert^2_{\mathbb{L}^2(\mathrm{B})}  = \frac{1}{|1+\varsigma|^2} \sum_\mathrm{n} \big|\big\langle \Tilde{\mathrm{E}}^\textbf{in}; \tilde{\mathrm{e}}_\mathrm{n}^{(2)}\big\rangle\big|^2 + \frac{(\omega^2\mu_\mathrm{m})^2\varsigma^2 \delta^4}{|1+\varsigma|^2}\sum_\mathrm{n}\big|\big\langle \mathbb{N}^{(\mathrm{k}\delta)}_\mathrm{B}\big[\Tilde{\mathrm{E}}\big]; \tilde{\mathrm{e}}_\mathrm{n}^{(2)}\big\rangle\big|^2 + \sum_\mathrm{n} \frac{\varsigma^2}{|1+\varsigma|^2} \big|\textbf{err.}^{(2)}_\mathrm{n}\big|^2,
 \end{align}
 where we denote by $\textbf{err.}^{(2)}_\mathrm{n} := \Big\langle \big(\mathbb{M}^{(\mathrm{k}\delta)}_\mathrm{B}-\mathbb{M}^{(\mathrm{0})}_\mathrm{B}\big)\big[\Tilde{\mathrm{E}}\big]; \tilde{\mathrm{e}}_\mathrm{n}^{(2)}\Big\rangle.$
    \item As a last step, we consider the sub-space $\nabla \mathbb{H}_{\text{arm}}$ and we take the inner-product with respect to $\mathrm{e}_\mathrm{n}^{(3)}$. We then derive
    \begin{align*}
      \big\langle \Tilde{\mathrm{E}}; \tilde{\mathrm{e}}_\mathrm{n}^{(3)}\big\rangle + \varsigma \big\langle \mathbb{M}^{(\mathrm{0})}_\mathrm{B}\big[\Tilde{\mathrm{E}}\big]; \tilde{\mathrm{e}}_\mathrm{n}^{(3)}\big\rangle = \big\langle \Tilde{\mathrm{E}}^\textbf{in}; \tilde{\mathrm{e}}_\mathrm{n}^{(3)}\big\rangle + \omega^2\mu_\mathrm{m}\varsigma \delta^2 \big\langle \mathbb{N}^{(\mathrm{k}\delta)}_\mathrm{B}\big[\Tilde{\mathrm{E}}\big]; \tilde{\mathrm{e}}_\mathrm{n}^{(3)}\big\rangle  + \varsigma \big\langle \big(\mathbb{M}^{(\mathrm{k}\delta)}_\mathrm{B}-\mathbb{M}^{(\mathrm{0})}_\mathrm{B}\big)\big[\Tilde{\mathrm{E}}\big]; \tilde{\mathrm{e}}_\mathrm{n}^{(3)}\big\rangle
    \end{align*}
     Then with the self-adjointness of the magnetic operator, we deduce
     \begin{align*}
      (1+\varsigma\lambda_\mathrm{n}^{(3)}\big\langle \Tilde{\mathrm{E}}; \tilde{\mathrm{e}}_\mathrm{n}^{(3)}\big\rangle = \big\langle \Tilde{\mathrm{E}}^\textbf{in}; \tilde{\mathrm{e}}_\mathrm{n}^{(3)}\big\rangle + \omega^2\mu_\mathrm{m}\varsigma \delta^2 \big\langle \mathbb{N}^{(\mathrm{k}\delta)}_\mathrm{B}\big[\Tilde{\mathrm{E}}\big]; \tilde{\mathrm{e}}_\mathrm{n}^{(3)}\big\rangle  + \varsigma \big\langle \big(\mathbb{M}^{(\mathrm{k}\delta)}_\mathrm{B}-\mathbb{M}^{(\mathrm{0})}_\mathrm{B}\big)\big[\Tilde{\mathrm{E}}\big]; \tilde{\mathrm{e}}_\mathrm{n}^{(3)}\big\rangle.
    \end{align*}
     Consequently,
    \begin{align}
   \big\Vert \overset{3}{\mathbb{P}}(\mathrm{\Tilde{E}})\big\Vert^2_{\mathbb{L}^2(\mathrm{B})}  =  \sum_\mathrm{n}\frac{1}{|1+\varsigma\lambda_\mathrm{n}^{(3)}|^2} \big|\big\langle \Tilde{\mathrm{E}}^\textbf{in}; \tilde{\mathrm{e}}_\mathrm{n}^{(3)}\big\rangle\big|^2 + \sum_\mathrm{n}\frac{(\omega^2\mu_\mathrm{m})^2\varsigma^2 \delta^4}{|1+\varsigma \lambda_\mathrm{n}^{(3)}|^2}\big|\big\langle \mathbb{N}^{(\mathrm{k}\delta)}_\mathrm{B}\big[\Tilde{\mathrm{E}}\big]; \tilde{\mathrm{e}}_\mathrm{n}^{(3)}\big\rangle\big|^2 + \sum_\mathrm{n} \frac{\varsigma^2}{|1+\varsigma\lambda_\mathrm{n}^{(3)}|^2} \big|\textbf{err.}^{(3)}_\mathrm{n}\big|^2,
 \end{align}
 where we denote by $\textbf{err.}^{(3)}_\mathrm{n} := \Big\langle \big(\mathbb{M}^{(\mathrm{k}\delta)}_\mathrm{B}-\mathbb{M}^{(\mathrm{0})}_\mathrm{B}\big)\big[\Tilde{\mathrm{E}}\big]; \tilde{\mathrm{e}}_\mathrm{n}^{(3)}\Big\rangle.$
\end{enumerate}
Now, we use Parseval's identity to estimate $\mathrm{E}$, i.e. we write
\begin{align}\label{parseval3D}
    \big\Vert \mathrm{\Tilde{E}}\big\Vert^2_{\mathbb{L}^2(\mathrm{B})} \nonumber &= \sum_{\mathrm{j}=1}^3 \big\Vert \overset{\mathrm{j}}{\mathbb{P}}(\mathrm{\Tilde{E}})\big\Vert^2_{\mathbb{L}^2(\mathrm{B})}
    \\ \nonumber &\lesssim \sum_\mathrm{n} \big|\big\langle \Tilde{\mathrm{E}}^\textbf{in}; \tilde{\mathrm{e}}_\mathrm{n}^{(1)}\big\rangle\big|^2 + \frac{1}{|1+\varsigma|^2} \sum_\mathrm{n} \big|\big\langle \Tilde{\mathrm{E}}^\textbf{in}; \tilde{\mathrm{e}}_\mathrm{n}^{(2)}\big\rangle\big|^2 + \sum_\mathrm{n}\frac{1}{|1+\varsigma\lambda_\mathrm{n}^{(3)}|^2} \big|\big\langle \Tilde{\mathrm{E}}^\textbf{in}; \tilde{\mathrm{e}}_\mathrm{n}^{(3)}\big\rangle\big|^2
    \\ & + \sum_\mathrm{n} \frac{\varsigma^2}{|1+\varsigma\lambda_\mathrm{n}^{(3)}|^2} \big|\textbf{err.}^{(3)}_\mathrm{n}\big|^2 + \sum_\mathrm{n} \frac{\varsigma^2}{|1+\varsigma|^2} \big|\textbf{err.}^{(2)}_\mathrm{n}\big|^2
    + \frac{(\omega^2\mu_\mathrm{m})^2\varsigma^2 \delta^4}{|1+\varsigma\lambda_\mathrm{n_0}^{(3)}|^2}\big\Vert \mathbb{N}^{(\mathrm{k}\delta)}_\mathrm{B}\big[\mathrm{\Tilde{E}}\big]\big\Vert^2_{\mathbb{L}^2(\mathrm{B})}
\end{align}
Furthermore, from the choice of the incident frequency, based on the Lorentz model, we have the following properties (\ref{plas}), with $\mathrm{h}>0$,
\begin{align}\label{condition3D}
\big|1 + \varsigma\lambda^{(3)}_{\mathrm{n}}\big| \sim 
\begin{cases}
\delta^\mathrm{h} & \mathrm{n} = \mathrm{n}_0 \\
1 & \mathrm{n} \ne \mathrm{n}_0.
\end{cases}   
\end{align}
Next, we estimate the following term
\begin{align}
    \textbf{err.}^{(3)}_\mathrm{n} \nonumber&:= \Big\langle \big(\mathbb{M}^{(\mathrm{k}\delta)}_\mathrm{B}-\mathbb{M}^{(\mathrm{0})}_\mathrm{B}\big)\big[\Tilde{\mathrm{E}}\big]; \tilde{\mathrm{e}}_\mathrm{n}^{(3)}\Big\rangle
    \\ \nonumber &= \frac{\omega^2\mu_\mathrm{m}\delta^2}{2} \big\langle\mathbb{N}_\mathrm{B}^{(\mathrm{0})}\big[\Tilde{\mathrm{E}}\big];\Tilde{\mathrm{e}}^{(3)}_{\mathrm{n}}\big\rangle - \frac{i\omega^3\mu^2_\mathrm{m}\delta^3}{12\pi}\big\langle\int_\mathrm{B} \Tilde{\mathrm{E}}(\mathrm{\eta})d\mathrm{\eta};\Tilde{\mathrm{e}}^{(3)}_{\mathrm{n}}\big\rangle \\ &+ \frac{\omega^2\mu_\mathrm{m}\delta^2}{2} \big\langle\int_\mathrm{B} \mathbb{G}^{(\mathrm{0})}(\mathrm{\xi},\mathrm{\eta})\frac{\mathrm{A}(\mathrm{\xi},\mathrm{\eta})\cdot\Tilde{\mathrm{E}}(\mathrm{\eta})}{\Vert \mathrm{\xi}-\mathrm{\eta}\Vert^2}d\mathrm{\eta};\Tilde{\mathrm{e}}^{(3)}_{\mathrm{n}}\big\rangle- \frac{1}{4\pi}\sum_{\mathrm{j}\ge 3} (i\omega\mu^\frac{1}{2}_\mathrm{m}\delta)^{\mathrm{j}+1}\Big\langle \int_\mathrm{B}\frac{\underset{\mathrm{\xi}}{\textbf{Hess}}(\Vert \mathrm{\xi}-\mathrm{\eta}\Vert^\mathrm{j})}{(\mathrm{j}+1)!}\cdot\Tilde{\mathrm{E}}(\mathrm{\eta})d\mathrm{\eta};\Tilde{\mathrm{e}}^{(3)}_{\mathrm{n}}\Big\rangle
\end{align}
Using the continuity of the Newtonian operator, squaring the preceding expression, taking the series with respect to n on both sides, we obtain
\begin{align}\label{Err3}
    \sum_\mathrm{n} \frac{\varsigma^2}{|1+\varsigma\lambda_\mathrm{n}^{(3)}|^2} \big|\textbf{err.}^{(3)}_\mathrm{n}\big|^2 \nonumber&\lesssim \delta^{-2\mathrm{h}}\Big[\frac{(\omega\mu_\mathrm{m}^\frac{1}{2}\delta)^4}{4}\Vert\Tilde{\mathrm{E}}\Vert^2_{\mathbb{L}^2(\mathrm{B})} + \frac{(\omega\mu_\mathrm{m}^\frac{1}{2}\delta)^6}{(12\pi)^2}\Vert\Tilde{\mathrm{E}}\Vert^2_{\mathbb{L}^2(\mathrm{B})} + \frac{(\omega\mu_\mathrm{m}^\frac{1}{2}\delta)^4}{4}\Vert\Tilde{\mathrm{E}}\Vert^2_{\mathbb{L}^2(\mathrm{B})} \\ &+ \frac{(\omega\mu_\mathrm{m}^\frac{1}{2}\delta)^8}{(4\pi)^2}\Vert\Tilde{\mathrm{E}}\Vert^2_{\mathbb{L}^2(\mathrm{B})} \underbrace{\sum_{\mathrm{j}\ge 3}\int_\mathrm{B}\int_\mathrm{B}\Bigg(\frac{\underset{\mathrm{x}}{\textbf{Hess}}(\Vert \mathrm{x}-\mathrm{y}\Vert^\mathrm{j})}{(\mathrm{j}+1)!}\Bigg)^2d\mathrm{y}d\mathrm{x}}_{<+\infty}\Big]
\end{align}
A similar analysis follows for the term $\displaystyle\sum_\mathrm{n} \frac{\varsigma^2}{|1+\varsigma|^2} \big|\textbf{err.}^{(2)}_\mathrm{n}\big|^2$. Consequently, after simplification, we reach the following conclusions from (\ref{parseval3D})
\begin{align}
    \big\Vert \mathrm{\Tilde{E}}\big\Vert^2_{\mathbb{L}^2(\mathrm{B})} \nonumber&= \sum_{\mathrm{j}=1}^3 \big\Vert \overset{\mathrm{j}}{\mathbb{P}}(\mathrm{\Tilde{E}})\big\Vert^2_{\mathbb{L}^2(\mathrm{B})}
    \\ \nonumber&\lesssim \sum_\mathrm{n} \frac{1}{|1+\varsigma\mathrm{\lambda}^{(3)}_{\mathrm{n}}|^2}\big\Vert \mathrm{\Tilde{E}^\textbf{in}}\big\Vert^2_{\mathbb{L}^2(\mathrm{B})}+ \sum_\mathrm{n}\frac{(\omega^2\mu_\mathrm{m})^2\varsigma^2\delta^4}{|1+\varsigma\mathrm{\lambda}^{(3)}_{\mathrm{n}}|^2} \; \big\Vert \mathbb{N}_\mathrm{B}^{(\mathrm{k}\delta)}\big[\mathrm{\Tilde{E}\big]}\big\Vert^2_{\mathbb{L}^2(\mathrm{B})} + \frac{(\omega^2\mu_\mathrm{m})^2\varsigma^2\delta^4}{|1+\varsigma\mathrm{\lambda}^{(3)}_{\mathrm{n}_0}|^2}\Vert\Tilde{\mathrm{E}}\Vert^2_{\mathbb{L}^2(\mathrm{B})}   
\end{align}
and then
\begin{align}\label{18}
    \Big(1- \frac{(\omega^2\mu_\mathrm{m})^2\varsigma^2\delta^4}{|1+\varsigma\mathrm{\lambda}^{(3)}_{\mathrm{n}_0}|^2}\Big) \big\Vert \mathrm{\Tilde{E}}\big\Vert^2_{\mathbb{L}^2(\mathrm{B})} \lesssim \frac{1}{|1+\varsigma\mathrm{\lambda}^{(3)}_{\mathrm{n}_0}|^2}\big\Vert \mathrm{\Tilde{E}^\textbf{in}}\big\Vert^2_{\mathbb{L}^2(\mathrm{B})} + \sum_\mathrm{n\ne\mathrm{n_0}} \frac{1}{|1+\varsigma\mathrm{\lambda}^{(3)}_{\mathrm{n}}|^2}\big\Vert \mathrm{\Tilde{E}^\textbf{in}}\big\Vert^2_{\mathbb{L}^2(\mathrm{B})}.
\end{align}
Thus, we deduce an a priori estimate using the identity (\ref{condition3D}) 
\begin{align}\label{aprori3D}
    \big\Vert \mathrm{{E}}\big\Vert_{\mathbb{L}^2(\mathrm{\Omega})} \sim \delta^{\frac{3}{2}-\mathrm{h}}\quad \text{for}\; \mathrm{h}<2.
\end{align}
Next, we use the above derived a priori estimate to clarify the exact dominant term of the formulation (\ref{parseval3D}).\\
\noindent
We know that the following mean vanishing integral properties are satisfied by the eigenfunctions $\mathrm{e}_\mathrm{n}^{(\mathrm{j})}$ for $\mathrm{j}=1,2$:
\begin{align}\label{meanvanish}
    \int_\mathrm{B} \mathrm{e}_\mathrm{n}^{(\mathrm{j})}(\mathrm{x}) d\mathrm{x} = 0. 
\end{align}
Thereafter, we do the following estimate using Taylor's expansion
\begin{align}\label{1.20}
     \big\Vert \overset{\mathrm{j}}{\mathbb{P}}(\mathrm{\Tilde{E}}^\textbf{in})\big\Vert^2_{\mathbb{L}^2(\mathrm{B})} \nonumber &= \sum_\mathrm{n} \big|\big\langle \Tilde{\mathrm{E}}^\textbf{in}; \tilde{\mathrm{e}}_\mathrm{n}^{(\mathrm{j})}\big\rangle\big|^2 
     \\ \nonumber &= \sum_\mathrm{n} \Big| \int_\mathrm{B} \Tilde{\mathrm{E}}^\textbf{in}(\mathrm{x})\cdot \tilde{\mathrm{e}}_\mathrm{n}^{(\mathrm{j})}(\mathrm{x})d\mathrm{x}\Big|^2
     \\ &= \sum_\mathrm{n} \Big| \Tilde{\mathrm{E}}^\textbf{in}(\mathrm{z}) \cdot \underbrace{\int_\mathrm{B} \mathrm{e}_\mathrm{n}^{(\mathrm{j})}(\mathrm{x}) d\mathrm{x}}_{=\; 0,\; (\ref{meanvanish})}\Big|^2 + \mathcal{O}(\delta^2).
\end{align}
Therefore, we have shown that $\big\Vert \overset{\mathrm{j}}{\mathbb{P}}(\mathrm{\Tilde{E}}^\textbf{in})\big\Vert_{\mathbb{L}^2(\mathrm{B})} \sim \delta$ for $\mathrm{j}=1,2.$ 
Then, based on the a priori estimate (\ref{aprori3D}), we deduce that
\begin{align}\label{1.22}
    \sum_\mathrm{n} \frac{\varsigma^2}{|1+\varsigma\lambda_\mathrm{n}^{(3)}|^2} \big|\textbf{err.}^{(3)}_\mathrm{n}\big|^2 \sim \delta^{4-4\mathrm{h}}.
\end{align}
In a similar way, we can show that
\begin{align}\label{1.23}
    \sum_\mathrm{n} \frac{\varsigma^2}{|1+\varsigma|^2} \big|\textbf{err.}^{(2)}_\mathrm{n}\big|^2 \sim \delta^{4-4\mathrm{h}}.
\end{align}
Moreover, we rewrite the expression (\ref{parseval3D}) using (\ref{1.20}), (\ref{1.22}), (\ref{1.23}) as follows:
\begin{align}\label{red}
    \int_{\mathrm{B}}|\Tilde{\mathrm{E}}|^2(\mathrm{\eta})d\mathrm{\eta} &= \frac{1}{|1+\varsigma\lambda_\mathrm{n_0}^{(3)}|^2} \big|\big\langle \Tilde{\mathrm{E}}^\textbf{in}; \tilde{\mathrm{e}}_\mathrm{n_0}^{(3)}\big\rangle\big|^2 + \sum_{\mathrm{n}\ne \mathrm{n_0}}\frac{1}{|1+\varsigma\lambda_\mathrm{n}^{(3)}|^2} \big|\big\langle \Tilde{\mathrm{E}}^\textbf{in}; \tilde{\mathrm{e}}_\mathrm{n}^{(3)}\big\rangle\big|^2 + \sum_{\mathrm{n}}\frac{1}{|1+\varsigma|^2} \big|\big\langle \Tilde{\mathrm{E}}^\textbf{in}; \tilde{\mathrm{e}}_\mathrm{n}^{(2)}\big\rangle\big|^2 + \mathcal{O}(\delta^{4-4\mathrm{h}}).
\end{align}
Now, as $\displaystyle\sum_{\mathrm{n}\ne \mathrm{n_0}}\frac{1}{|1+\varsigma\lambda_\mathrm{n}^{(3)}|^2} \big|\big\langle \Tilde{\mathrm{E}}^\textbf{in}; \tilde{\mathrm{e}}_\mathrm{n}^{(3)}\big\rangle\big|^2 \sim 1$ and $\displaystyle\big\langle \Tilde{\mathrm{E}}^\textbf{in}; \tilde{\mathrm{e}}_\mathrm{n_0}^{(3)}\big\rangle_{\mathbb{L}^2(\mathrm{B})} = \mathrm{u}^{\textbf{in}}(\mathrm{z})\cdot\int_\mathrm{\mathrm{B}}\Tilde{\mathrm{e}}^{(3)}_{\mathrm{n}_{0}}(\mathrm{x})d\mathrm{x} + \mathcal{O}(\delta)$, we deduce that
\begin{align*}
    \int_{\Omega}|\mathrm{E}|^2(\mathrm{y})d\mathrm{y} = \frac{1}{ |1 + \varsigma\lambda^{(3)}_{\mathrm{n}_{0}}|^2}\delta^3\Big|\mathrm{E}^{\textbf{in}}(\mathrm{z})\cdot\int_\mathrm{\mathrm{B}}\Tilde{\mathrm{e}}^{(3)}_{\mathrm{n}_{0}}(\mathrm{x})d\mathrm{x}\Big|^2 + \begin{cases}
        \mathcal{O}\big(\delta^{4-2\mathrm{h}}\big)\quad \text{for} \quad \mathrm{h}\in(0,\frac{3}{2}). \\[10pt]
        \mathcal{O}\big(\delta^{7-4\mathrm{h}}\big)\quad \text{for} \quad \mathrm{h}\in(\frac{3}{2},2).
        \end{cases} 
\end{align*}
The proof of Theorem \ref{th13D}(1) is completed.

\subsection{Proof of Theorem \ref{th13D}(2)}\label{dielectricnanoth}

In a similar way as for to the plasmonic case, we show the asymptotic analysis of the solution to (\ref{Maxwell model}) as $\delta\to 0$ when a \underline{dielectric nanoparticle} occupy the domain $\Omega = \delta\mathrm{B}+\mathrm{z}.$
\\

\noindent
We begin by recalling that for dielectric nanoparticle the contrast parameter $\varsigma := \varepsilon_\mathrm{p}-\varepsilon_\mathrm{m}$ behaves as $\delta^{-2},\; \delta\ll 1.$ Then, we start from Lippmann-Schwinger system of equation in the scaled domain $\mathrm{B}$
    \begin{align}\label{scaeq}
    \Tilde{\mathrm{E}}(\mathrm{\xi}) + \varsigma \; \mathbb{M}^{(\mathrm{k}\delta)}_\mathrm{B}\big[\Tilde{\mathrm{E}}\big](\mathrm{\xi}) - \omega^2\mu_\mathrm{m}\varsigma \delta^2 \; \mathbb{N}_\mathrm{B}^{(\mathrm{k}\delta)}\big[\Tilde{\mathrm{E}}\big](\mathrm{\xi}) = \Tilde{\mathrm{E}}^\textbf{in}(\mathrm{\xi}).
\end{align}
Similarly to the plasmonic situation, we project the scaled equation (\ref{scaeq}) with respect to the eigen-functions $\mathrm{e}^{(\mathrm{j})}_\mathrm{n}$ for $\mathrm{j}=1,2,3$, in each of the sub-spaces mentioned in (\ref{eq:subspaces3D}).

\begin{enumerate}
    \item As, $\mathbb{M}^{(\mathrm{k})}_\mathrm{B}$ is vanishing in $\mathbb{H}_0(\textbf{div},0)$ and $\mathbb{N}^{(\mathrm{0})}_\mathrm{B}$ induces an eigen-system $\big(\Tilde{\lambda}^{(1)}_{\mathrm{n}},\Tilde{\mathrm{e}}^{(1)}_{\mathrm{n}}\big)_{\mathrm{n} \in \mathbb{N}}$, we rewrite (\ref{scaeq}) after taking an inner product with respect to $\mathrm{e}^{(\mathrm{1})}_\mathrm{n}$ as follows:
    
    \begin{align}\label{newton13d}
        (1-\omega^2\mu_\mathrm{m}\varsigma \delta^2\Tilde{\lambda}^{(1)}_{\mathrm{n}})\langle \Tilde{\mathrm{E}}; \tilde{\mathrm{e}}_\mathrm{n}^{(1)}\big\rangle = \big\langle \Tilde{\mathrm{E}}^\textbf{in}; \tilde{\mathrm{e}}_\mathrm{n}^{(1)}\big\rangle + \omega^2\mu_\mathrm{m}\varsigma \delta^2 \big\langle \mathbb{N}^{(\mathrm{k}\delta)}_\mathrm{B}-\mathbb{N}^{(\mathrm{0})}_\mathrm{B}\big[\Tilde{\mathrm{E}}\big]; \tilde{\mathrm{e}}_\mathrm{n}^{(1)}\big\rangle.
    \end{align}
    Then, using the expression (\ref{newton3d}) in (\ref{newton13d}), we deduce the following after taking a modulus 
    \begin{align}
               |\langle \Tilde{\mathrm{E}}; \tilde{\mathrm{e}}_\mathrm{n}^{(1)}\big\rangle| = \frac{1}{\big|1-\omega^2\mu_\mathrm{m}\varsigma \delta^2\Tilde{\lambda}^{(1)}_{\mathrm{n}}\big|}\Bigg[\big|\big\langle \Tilde{\mathrm{E}}^\textbf{in}; \tilde{\mathrm{e}}_\mathrm{n}^{(1)}\big\rangle\big| &\nonumber+ \omega^2\mu_\mathrm{m}\varsigma \delta^2 \frac{i\omega\mu^\frac{1}{2}_\mathrm{m}\delta}{4\pi}\big|\Big\langle\int_\mathrm{B} \Tilde{\mathrm{E}}(\eta)d\eta; \tilde{\mathrm{e}}_\mathrm{n}^{(1)}\Big\rangle\big| \\ &+ \frac{\omega^2\mu_\mathrm{m}\varsigma \delta^2}{4\pi}\sum_{\mathrm{j}\ge 1} \frac{(i\omega\mu^\frac{1}{2}_\mathrm{m}\delta)^{\mathrm{j}+1}}{(\mathrm{j}+1)!} \big|\Big\langle\int_\mathrm{B} \Vert \cdot-\eta\Vert^\mathrm{j} \Tilde{\mathrm{E}}(\eta)d\eta; \tilde{\mathrm{e}}_\mathrm{n}^{(1)}\Big\rangle\big|\Bigg]. 
    \end{align}
Moreover, we know that $\nabla \mathbb{H}_{\textbf{arm}}$ is orthogonal to each of the subspace $\mathbb{H}_{0}(\textbf{div},0)$ and $\mathbb{H}_{0}(\textbf{curl},0)$. Additionally, we have $\displaystyle\int_\mathrm{B}\tilde{\mathrm{e}}_\mathrm{n}^{(1)}(\mathrm{x})d\mathrm{x} = 0$ as the identity matrix $\mathbb{I} \in \nabla \mathbb{H}_{\textbf{arm}}$. Consequently, we obtain
    \begin{align}\label{dieapp1}
        \big\Vert \overset{1}{\mathbb{P}}(\mathrm{\Tilde{E}})\big\Vert^2_{\mathbb{L}^2(\mathrm{B})} \nonumber&= \sum_\mathrm{n} \big|\big\langle \Tilde{\mathrm{E}}; \tilde{\mathrm{e}}_\mathrm{n}^{(1)}\big\rangle\big|^2
        \\ \nonumber&\lesssim \sum_\mathrm{n} 
               \frac{1}{\big|1-\omega^2\mu_\mathrm{m}\varsigma \delta^2\Tilde{\lambda}^{(1)}_{\mathrm{n}}\big|^2} \big|\big\langle \Tilde{\mathrm{E}}^\textbf{in}; \tilde{\mathrm{e}}_\mathrm{n}^{(1)}\big\rangle\big|^2 + \delta^{4-2\mathrm{h}} \sum_\mathrm{n} \Big|\sum_{\mathrm{j}\ge 1} \frac{1}{4\pi(\mathrm{j}+1)!} \Big\langle\int_\mathrm{B} \Vert \cdot-\eta\Vert^\mathrm{j} \Tilde{\mathrm{E}}(\eta)d\eta; \tilde{\mathrm{e}}_\mathrm{n}^{(1)}\Big\rangle\Big|^2
               \\ &\lesssim \sum_\mathrm{n} 
               \frac{1}{\big|1-\omega^2\mu_\mathrm{m}\varsigma \delta^2\Tilde{\lambda}^{(1)}_{\mathrm{n}}\big|^2} \big|\big\langle \Tilde{\mathrm{E}}^\textbf{in}; \tilde{\mathrm{e}}_\mathrm{n}^{(1)}\big\rangle\big|^2 + \delta^{4-2\mathrm{h}} \big\Vert \mathrm{\Tilde{E}}\big\Vert^2_{\mathbb{L}^2(\mathrm{B})}.
    \end{align}
    
    \item Next, we consider the equation (\ref{scaeq}) in $\mathbb{H}_{0}(\textbf{curl},0)$ and we take inner product with respect to $\Tilde{\mathrm{e}}_\mathrm{n}^{(2)}$ in the scaled domain $\mathrm{B}$ to obtain 
    \begin{align}
    \big\langle \Tilde{\mathrm{E}}; \tilde{\mathrm{e}}_\mathrm{n}^{(2)}\big\rangle + \varsigma \big\langle \mathbb{M}^{(\mathrm{k}\delta)}_\mathrm{B}\big[\Tilde{\mathrm{E}}\big]; \tilde{\mathrm{e}}_\mathrm{n}^{(2)}\big\rangle - \omega^2\mu_\mathrm{m}\varsigma \delta^2 \big\langle \mathbb{N}^{(\mathrm{k}\delta)}_\mathrm{B}\big[\Tilde{\mathrm{E}}\big]; \tilde{\mathrm{e}}_\mathrm{n}^{(2)}\big\rangle= \big\langle \Tilde{\mathrm{E}}^\textbf{in}; \tilde{\mathrm{e}}_\mathrm{n}^{(2)}\big\rangle.
    \end{align}
    The adjoint operators of the Magnetization and Newtonian potentials are then taken into account, and they are $\mathbb{M}^{(-\mathrm{k}  \delta)}_\mathrm{B}$ and $\mathbb{N}^{(-\mathrm{k} \delta)}_\mathrm{B}$ respectively. We then pass the adjoint operator with respect to $\tilde{\mathrm{e}}_\mathrm{n}^{(2)}$ to derive
    \begin{align}\label{project-en-2}
        \big\langle \Tilde{\mathrm{E}}; \tilde{\mathrm{e}}_\mathrm{n}^{(2)}\big\rangle + \varsigma \big\langle \Tilde{\mathrm{E}}; \mathbb{M}^{(-\mathrm{k}  \delta)}_\mathrm{B}\big(\tilde{\mathrm{e}}_\mathrm{n}^{(2)}\big)\big\rangle - \omega^2\mu_\mathrm{m}\varsigma \delta^2 \big\langle \Tilde{\mathrm{E}}; \mathbb{N}^{(-\mathrm{k}  \delta)}_\mathrm{B}\big(\tilde{\mathrm{e}}_\mathrm{n}^{(2)}\big)\big\rangle= \big\langle \Tilde{\mathrm{E}}^\textbf{in}; \tilde{\mathrm{e}}_\mathrm{n}^{(2)}\big\rangle.
    \end{align}
    First, we note that $\mathbb{M}^{(-\mathrm{k}  \delta)}_\mathrm{B}\big(\tilde{\mathrm{e}}_\mathrm{n}^{(2)}\big) = -\nabla\nabla\cdot\mathbb{N}^{(-\mathrm{k}  \delta)}_\mathrm{B}\big(\tilde{\mathrm{e}}_\mathrm{n}^{(2)}\big)$. Then from the identity $\nabla\nabla\cdot u = (\Delta + \textbf{curl}\;\textbf{curl}\; )u$ and as $\mathrm{e}_\mathrm{n}^{(2)} \in \mathbb{H}_{0}(\textbf{curl},0)$, we have $\mathbb{M}^{(-\mathrm{k}  \delta)}_\mathrm{B}\big(\tilde{\mathrm{e}}_\mathrm{n}^{(2)}\big) = \omega^2\mu_\mathrm{m}\delta^2 \mathbb{N}^{(-\mathrm{k}  \delta)}_\mathrm{B}\big(\tilde{\mathrm{e}}_\mathrm{n}^{(2)}\big) + \tilde{\mathrm{e}}_\mathrm{n}^{(2)}.$ Consequently, (\ref{project-en-2}) becomes
    \begin{align}
        \nonumber
        \big\langle \Tilde{\mathrm{E}}; \tilde{\mathrm{e}}_\mathrm{n}^{(2)}\big\rangle = \frac{1}{1+\varsigma} \big\langle \Tilde{\mathrm{E}}^\textbf{in}; \tilde{\mathrm{e}}_\mathrm{n}^{(2)}\big\rangle.
    \end{align}
    As, $\Tilde{\mathrm{E}}^\textbf{in} \in \mathbb{H}_{0}(\textbf{div})$ which is equal to $\mathbb{H}_{0}(\textbf{div},0) \bigoplus \nabla \mathbb{H}_{\textbf{arm}}$, orthogonal to $\mathbb{H}_{0}(\textbf{curl},0),$ we deduce
    \begin{align}\label{dieapp2}
        \big\Vert \overset{2}{\mathbb{P}}(\mathrm{\Tilde{E}})\big\Vert^2_{\mathbb{L}^2(\mathrm{B})} = \sum_\mathrm{n} \big|\big\langle \Tilde{\mathrm{E}}; \tilde{\mathrm{e}}_\mathrm{n}^{(2)}\big\rangle\big|^2 = 0.
    \end{align}

    \item We then consider the sub-space $\nabla \mathbb{H}_{\text{arm}}$ and we take the inner-product with respect to $\mathrm{e}_\mathrm{n}^{(3)}$ to write the equation (\ref{scaeq}) as follows
    \begin{align*}
      \big\langle \Tilde{\mathrm{E}}; \tilde{\mathrm{e}}_\mathrm{n}^{(3)}\big\rangle + \varsigma \big\langle \mathbb{M}^{(\mathrm{0})}_\mathrm{B}\big[\Tilde{\mathrm{E}}\big]; \tilde{\mathrm{e}}_\mathrm{n}^{(3)}\big\rangle = \big\langle \Tilde{\mathrm{E}}^\textbf{in}; \tilde{\mathrm{e}}_\mathrm{n}^{(3)}\big\rangle + \omega^2\mu_\mathrm{m}\varsigma \delta^2 \big\langle \mathbb{N}^{(\mathrm{k}\delta)}_\mathrm{B}\big[\Tilde{\mathrm{E}}\big]; \tilde{\mathrm{e}}_\mathrm{n}^{(3)}\big\rangle  + \varsigma \Big\langle \big(\mathbb{M}^{(\mathrm{k}\delta)}_\mathrm{B}-\mathbb{M}^{(\mathrm{0})}_\mathrm{B}\big)\big[\Tilde{\mathrm{E}}\big]; \tilde{\mathrm{e}}_\mathrm{n}^{(3)}\Big\rangle.
    \end{align*}
    We know that the magnetization potential is self-adjoint and  induces an eigen-system $\big(\lambda_\mathrm{n}^{(3)},\mathrm{e}_\mathrm{n}^{(3)}\big)_{\mathrm{n}\in \mathbb{N}}$ in $\nabla \mathbb{H}_\textbf{arm}.$, we deduce from the previous expression
    \begin{align}
        (1+\varsigma\lambda_\mathrm{n}^{(3)}\big\langle \Tilde{\mathrm{E}}; \tilde{\mathrm{e}}_\mathrm{n}^{(3)}\big\rangle = \big\langle \Tilde{\mathrm{E}}^\textbf{in}; \tilde{\mathrm{e}}_\mathrm{n}^{(3)}\big\rangle + \omega^2\mu_\mathrm{m}\varsigma \delta^2 \big\langle \mathbb{N}^{(\mathrm{k}\delta)}_\mathrm{B}\big[\Tilde{\mathrm{E}}\big]; \tilde{\mathrm{e}}_\mathrm{n}^{(3)}\big\rangle  + \varsigma \Big\langle \big(\mathbb{M}^{(\mathrm{k}\delta)}_\mathrm{B}-\mathbb{M}^{(\mathrm{0})}_\mathrm{B}\big)\big[\Tilde{\mathrm{E}}\big]; \tilde{\mathrm{e}}_\mathrm{n}^{(3)}\Big\rangle.
    \end{align}
    Furthermore, using the expression for $\mathbb{N}^{(\mathrm{k})}_\mathrm{B}$ in (\ref{1.20}), we deduce
    \begin{align}\label{app}
        \big\Vert \overset{3}{\mathbb{P}}(\mathrm{\Tilde{E}})\big\Vert^2_{\mathbb{L}^2(\mathrm{B})}  =  \sum_\mathrm{n}\frac{1}{|1+\varsigma\lambda_\mathrm{n}^{(3)}|^2} \big|\big\langle \Tilde{\mathrm{E}}^\textbf{in}; \tilde{\mathrm{e}}_\mathrm{n}^{(3)}\big\rangle\big|^2 + \sum_\mathrm{n}\frac{(\omega^2\mu_\mathrm{m})^2\varsigma^2 \delta^4}{|1+\varsigma \lambda_\mathrm{n}^{(3)}|^2}\big|\textbf{Error}^{(2)}_\mathrm{n}\big|^2 + \sum_\mathrm{n} \frac{\varsigma^2}{|1+\varsigma\lambda_\mathrm{n}^{(3)}|^2} \big|\textbf{Error}^{(3)}_\mathrm{n}\big|^2,
 \end{align}
 where we denote by $\textbf{Error}^{(2)}_\mathrm{n} := \big\langle \mathbb{N}^{(\mathrm{k}\delta)}_\mathrm{B}\big[\Tilde{\mathrm{E}}\big]; \tilde{\mathrm{e}}_\mathrm{n}^{(3)}\big\rangle$ and $\textbf{Error}^{(3)}_\mathrm{n} := \Big\langle \big(\mathbb{M}^{(\mathrm{k}\delta)}_\mathrm{B}-\mathbb{M}^{(\mathrm{0})}_\mathrm{B}\big)\big[\Tilde{\mathrm{E}}\big]; \tilde{\mathrm{e}}_\mathrm{n}^{(3)}\Big\rangle.$
 \\

 In a similar way as (\ref{Err3}), we obtain that
 \begin{align}\label{app1}
    \sum_\mathrm{n} \frac{\varsigma^2}{|1+\varsigma\lambda_\mathrm{n}^{(3)}|^2} \big|\textbf{Error}^{(3)}_\mathrm{n}\big|^2 \nonumber&\lesssim \frac{(\omega\mu_\mathrm{m}^\frac{1}{2}\delta)^4}{4}\Vert\Tilde{\mathrm{E}}\Vert^2_{\mathbb{L}^2(\mathrm{B})} + \frac{(\omega\mu_\mathrm{m}^\frac{1}{2}\delta)^6}{(12\pi)^2}\Vert\Tilde{\mathrm{E}}\Vert^2_{\mathbb{L}^2(\mathrm{B})} + \frac{(\omega\mu_\mathrm{m}^\frac{1}{2}\delta)^4}{4}\Vert\Tilde{\mathrm{E}}\Vert^2_{\mathbb{L}^2(\mathrm{B})} \\ &+ \frac{(\omega\mu_\mathrm{m}^\frac{1}{2}\delta)^8}{(4\pi)^2}\Vert\Tilde{\mathrm{E}}\Vert^2_{\mathbb{L}^2(\mathrm{B})} \underbrace{\sum_{\mathrm{j}\ge 3}\int_\mathrm{B}\int_\mathrm{B}\Bigg(\frac{\underset{\mathrm{x}}{\textbf{Hess}}(\Vert \mathrm{x}-\mathrm{y}\Vert^\mathrm{j})}{(\mathrm{j}+1)!}\Bigg)^2d\mathrm{y}d\mathrm{x}}_{<+\infty}.
\end{align}
Now, we estimate $\textbf{Error}^{(2)}_\mathrm{n}$ as follows
\begin{align}
    &\nonumber\textbf{Error}^{(2)}_\mathrm{n} \\ \nonumber&:= \big\langle \mathbb{N}^{(\mathrm{k}\delta)}_\mathrm{B}\big[\Tilde{\mathrm{E}}\big]; \tilde{\mathrm{e}}_\mathrm{n}^{(3)}\big\rangle
    \\ \nonumber &= \frac{\omega^2\mu_\mathrm{m}\delta^2}{2} \big\langle\mathbb{N}_\mathrm{B}^{(\mathrm{0})}\big[\Tilde{\mathrm{E}}\big];\Tilde{\mathrm{e}}^{(3)}_{\mathrm{n}}\big\rangle - \frac{i\omega^3\mu^2_\mathrm{m}\delta^3}{12\pi}\big\langle\int_\mathrm{B} \Tilde{\mathrm{E}}(\mathrm{\eta})d\mathrm{\eta};\Tilde{\mathrm{e}}^{(3)}_{\mathrm{n}}\big\rangle + \frac{1}{4\pi}\sum_{\mathrm{j}\ge 1} \frac{(i\omega\mu_\mathrm{m}^\frac{1}{2}\delta)^{\mathrm{j}+1}}{(\mathrm{j}+1)!} \big|\Big\langle\int_\mathrm{B} \Vert \cdot-\eta\Vert^\mathrm{j} \Tilde{\mathrm{E}}(\eta)d\eta; \tilde{\mathrm{e}}_\mathrm{n}^{(3)}\Big\rangle\big|.
\end{align}
We then use the Continuity of the Newtonian operator to obtain the following
\begin{align}\label{app2}
    \sum_\mathrm{n}\frac{(\omega^2\mu_\mathrm{m})^2\varsigma^2 \delta^4}{|1+\varsigma \lambda_\mathrm{n}^{(3)}|^2}\big|\textbf{Error}^{(2)}_\mathrm{n}\big|^2 &\nonumber\lesssim \frac{(\omega\mu_\mathrm{m}^\frac{1}{2}\delta)^4}{4}\Vert\Tilde{\mathrm{E}}\Vert^2_{\mathbb{L}^2(\mathrm{B})} + \frac{(\omega\mu_\mathrm{m}^\frac{1}{2}\delta)^6}{(12\pi)^2}\Vert\Tilde{\mathrm{E}}\Vert^2_{\mathbb{L}^2(\mathrm{B})} \\ &+ \frac{(\omega\mu_\mathrm{m}^\frac{1}{2}\delta)^8}{(4\pi)^2}\Vert\Tilde{\mathrm{E}}\Vert^2_{\mathbb{L}^2(\mathrm{B})} \underbrace{\sum_{\mathrm{j}\ge 1}\int_\mathrm{B}\int_\mathrm{B}\Bigg(\frac{(\Vert \mathrm{x}-\mathrm{y}\Vert^\mathrm{j})}{(\mathrm{j}+1)!}\Bigg)^2d\mathrm{y}d\mathrm{x}}_{<+\infty}
\end{align}
Consequently, inserting (\ref{app1}) and (\ref{app2}) in (\ref{app}), we deduce
\begin{align}\label{dieapp3}
  \big\Vert \overset{3}{\mathbb{P}}(\mathrm{\Tilde{E}})\big\Vert^2_{\mathbb{L}^2(\mathrm{B})}  =  \delta^4\big\Vert \overset{3}{\mathbb{P}}(\mathrm{\Tilde{E}}^\textbf{in})\big\Vert^2_{\mathbb{L}^2(\mathrm{B})} +  \delta^4\Vert\Tilde{\mathrm{E}}\Vert^2_{\mathbb{L}^2(\mathrm{B})}.
\end{align}
\end{enumerate}
Thus, using (\ref{dieapp1}), (\ref{dieapp2}) and (\ref{dieapp3}) into the following Parseval's identity, we deduce
\begin{align}\label{final}
    \big\Vert \mathrm{\Tilde{E}}\big\Vert^2_{\mathbb{L}^2(\mathrm{B})} \nonumber&= \sum_{\mathrm{j}=1}^3 \big\Vert \overset{\mathrm{j}}{\mathbb{P}}(\mathrm{\Tilde{E}})\big\Vert^2_{\mathbb{L}^2(\mathrm{B})}
    \\ \nonumber&=\frac{1}{\big|1-\omega^2\mu_\mathrm{m}\varsigma \delta^2\Tilde{\lambda}^{(1)}_{\mathrm{n_0}}\big|^2} \big|\big\langle \Tilde{\mathrm{E}}^\textbf{in}; \tilde{\mathrm{e}}_\mathrm{n_0}^{(1)}\big\rangle\big|^2 + \sum_{\mathrm{n} \ne \mathrm{n}_0}
    \frac{1}{\big|1-\omega^2\mu_\mathrm{m}\varsigma \delta^2\Tilde{\lambda}^{(1)}_{\mathrm{n}}\big|^2} \big|\big\langle \Tilde{\mathrm{E}}^\textbf{in}; \tilde{\mathrm{e}}_\mathrm{n}^{(1)}\big\rangle\big|^2 + (\omega^2\mu_\mathrm{m})^2\delta^{4-2\mathrm{h}} \big\Vert \mathrm{\Tilde{E}}\big\Vert^2_{\mathbb{L}^2(\mathrm{B})} \\ &+ \delta^4\big\Vert \overset{3}{\mathbb{P}}(\mathrm{\Tilde{E}}^\textbf{in})\big\Vert^2_{\mathbb{L}^2(\mathrm{B})}
\end{align}
or
\begin{align}
    \big(1-(\omega^2\mu_\mathrm{m})^2\delta^{4-2\mathrm{h}}\big) \big\Vert \mathrm{\Tilde{E}}\big\Vert^2_{\mathbb{L}^2(\mathrm{B})} \lesssim \frac{1}{\big|1-\omega^2\mu_\mathrm{m}\varsigma \delta^2\Tilde{\lambda}^{(1)}_{\mathrm{n_0}}\big|^2} \big|\big\langle \Tilde{\mathrm{E}}^\textbf{in}; \tilde{\mathrm{e}}_\mathrm{n_0}^{(1)}\big\rangle\big|^2 + \sum_{\mathrm{n} \ne \mathrm{n}_0}
    \frac{1}{\big|1-\omega^2\mu_\mathrm{m}\varsigma \delta^2\Tilde{\lambda}^{(1)}_{\mathrm{n}}\big|^2} \big|\big\langle \Tilde{\mathrm{E}}^\textbf{in}; \tilde{\mathrm{e}}_\mathrm{n}^{(1)}\big\rangle\big|^2.
\end{align}
We also have 
\begin{align}\label{curl}
    \big\langle \Tilde{\mathrm{E}}^\textbf{in}; \tilde{\mathrm{e}}_\mathrm{n_0}^{(1)}\big\rangle = \big\langle \Tilde{\mathrm{E}}^\textbf{in}; \textbf{curl}(\Tilde{\varphi}_\mathrm{n_0})\big\rangle = \big\langle \textbf{curl}(\Tilde{\mathrm{E}}^\textbf{in}); \varphi_\mathrm{n_0}\big\rangle = i\omega\mu_\mathrm{m}\delta \big\langle \Tilde{\mathrm{H}}^\textbf{in}; \varphi_\mathrm{n_0}\big\rangle,
\end{align}
where, we write $\tilde{\mathrm{e}}_\mathrm{n_0}^{(1)} = \textbf{curl}(\Tilde{\varphi}_\mathrm{n_0})$ as $\tilde{\mathrm{e}}_\mathrm{n_0}^{(1)} \in \mathbb{H}_{0}(\textbf{div},0) = \textbf{curl}\Big(\mathbb{H}_0(\textbf{curl})\cap \mathbb{H}(\textbf{div},0)\Big).$ 
\\
\noindent
Thus, we have the following a priori estimate for the electric field $\mathrm{E}$ when a dielectric nanoparticle occupy the domain $\Omega = \delta\mathrm{B}+\mathrm{z},\;\delta\ll1$
\begin{align}\label{aprdie}
    \big\Vert \mathrm{\Tilde{E}}\big\Vert^2_{\mathbb{L}^2(\mathrm{B})} \sim \delta^{2-2\mathrm{h}} \quad \text{for}\; \mathrm{h}<2.
\end{align}
Thus, utilizing the estimate (\ref{aprdie}) and the identity (\ref{curl}) we deduce from (\ref{final})
\begin{align*}
    \int_{\Omega}|\mathrm{E}|^2(\mathrm{y})d\mathrm{y} = \frac{\omega^2\mu^2_\mathrm{m}}{ |1 - \omega^2\mu_\mathrm{m}\varsigma\delta^2\lambda^{(1)}_{\mathrm{n}_{0}}|^2}\delta^5\Big|\mathrm{H}^{\textbf{in}}(\mathrm{z})\cdot\int_\mathrm{\mathrm{B}}\Tilde{\varphi}_{\mathrm{n}_{0}}(\mathrm{x})d\mathrm{x}\Big|^2 +
    \begin{cases}
      \mathcal{O}\big(\delta^5\big) \quad \quad \; \text{for} \quad \mathrm{h}\in(0,1).\\[10pt] 
      \mathcal{O}\big(\delta^{9-4\mathrm{h}}\big) \quad \text{for} \quad \mathrm{h}\in(1,2).
    \end{cases} 
\end{align*}
The proof of Theorem \ref{th13D}(2) is completed.

\section{Proof of Theorem \ref{mainth}}\label{heatmainth}

This section describes the asymptotic analysis of the solution to (\ref{heattran}) as $\delta\to 0$ when a Lorentzian nanoparticle occupy a bounded domain $\Omega = \mathrm{z} + \delta \mathrm{B}.$

\subsection{Mathematical Preliminaries}

We start this section by recalling the following classical singular estimates for the fundamental solution $\Phi(\mathrm{x},\mathrm{t};\mathrm{y},\tau):$
\begin{equation}\label{singularities}
\begin{cases}
    \big|\Phi(\mathrm{x},t;\mathrm{y},\tau)\big| \lesssim \frac{\alpha^\mathrm{r}}{(t-\tau)^r}\frac{1}{|\mathrm{x}-\mathrm{y}|^{3-2r}}, \ \ \ \mathrm{r}<\frac{3}{2},
\\[10pt]
    \big|\partial_{\mathrm{x}_{i}}\Phi(\mathrm{x},t;\mathrm{y},\tau)\big| \lesssim \frac{\alpha^\mathrm{r}}{(t-\tau)^\mathrm{r}}\frac{1}{|\mathrm{x}-\mathrm{y}|^{4-2\mathrm{r}}}, \ \ \ \mathrm{r}<\frac{5}{2},\ i=1,2,
\\[10pt]
    \big|\bm{\mathrm{D}_\nu}\Phi(\mathrm{x},t;\mathrm{y},\tau)\big| \lesssim \frac{\alpha^\mathrm{r}}{(t-\tau)^r}\frac{1}{|\mathrm{x}-\mathrm{y}|^{3-2r}}, \ \ \ \mathrm{r}<\frac{5}{2}, \mbox{ for } x, y \in \partial \Omega,
\\[10pt]
    \big|\partial_{t}\Phi(\mathrm{x},t;\mathrm{y},\tau)\big| \lesssim \frac{\alpha^{1-r}}{(t-\tau)^\mathrm{r}}\frac{1}{|\mathrm{x}-\mathrm{y}|^{5-2\mathrm{r}}}, \ \ \ r<\frac{5}{2},
\\[10pt]
    \big|\partial_{t}^{\frac{1}{2}}\Phi(\mathrm{x},t;\mathrm{y},\tau)\big| \lesssim \frac{\alpha^r}{(t-\tau)^\mathrm{r}}\frac{1}{|\mathrm{x}-\mathrm{y}|^{4-2\mathrm{r}}}, \ \ \ r<\frac{5}{2},
    \end{cases}
\end{equation}
for $0\le \tau\le \mathrm{t}\le \mathrm{T}$ and $\mathrm{x},\mathrm{y} \in \mathbb{R}^3$ with $\mathrm{x}\ne \mathrm{y}.$ It will be important to work in the environment of the anisotropic Sobolev spaces for our problem. We use $\mathrm{H}^{\frac{1}{2},\frac{1}{4}}$ to denote the Hilbert space
\begin{equation*}
    \mathrm{H}^{\frac{1}{2},\frac{1}{4}} := \mathrm{L}^2\big(\mathbb{R};\mathrm{H}^\frac{1}{2}(\mathbb{R}^3)\big) \cap \mathrm{H}^\frac{1}{4}\big(\mathbb{R};\mathrm{L}^2(\mathbb{R}^3)\big),
\end{equation*}
with the associated norm
\begin{equation*}
    \big\Vert u \big\Vert_{\mathrm{H}^{\frac{1}{2},\frac{1}{4}}}^2 := \big\Vert u \big\Vert_{\mathrm{L}^2\big(\mathbb{R};\mathrm{H}^\frac{1}{2}(\mathbb{R}^3)\big)}^2 + \big\Vert u \big\Vert_{\mathrm{H}^\frac{1}{4}\big(\mathbb{R};\mathrm{L}^2(\mathbb{R}^3)\big)}^2.
\end{equation*}
Analogously, we define, for $\partial\Omega \subseteq \mathbb{R}^3$, the following norm
\begin{align}
    \big\Vert u \big\Vert^2_{\mathrm{H}^{\frac{1}{2},\frac{1}{4}}\big(\partial\Omega\times\mathbb{R}\big)} = \int_\mathbb{R}\Vert u(\cdot,\mathrm{t})\Vert^2_{\mathrm{H}^\frac{1}{2}(\partial\Omega)}d\mathrm{t} + \int_\mathbb{R}\int_\mathbb{R}\dfrac{\Vert u(\cdot,t)-u(\cdot,\tau)\Vert_{\mathrm{L}^2(\partial\Omega)}^2}{|t-\tau|^\frac{3}{2}}d\mathrm{t} d\tau,
\end{align}
where
\begin{align}\nonumber
    \Vert u\Vert^2_{\mathrm{H}^{\frac{1}{2}}(\partial\Omega)} := \Vert u \Vert_{\mathrm{L}^2(\partial \Omega)}^2 + \int_{\partial\Omega}\int_{\partial\Omega}\frac{|u(\mathrm{x})-u(\mathrm{y})|^2}{|\mathrm{x}-\mathrm{y}|^{3}}d\sigma_\mathrm{x}d\sigma_\mathrm{y}.
\end{align}
We also recall some known properties of the boundary layers operators, volume and initial potentials for the heat operator. We refer to \cite{costabel, dohr, hofman, AM, lions2, Noonthe} for more details. 
\begin{lemma}
Let us consider $\Omega$ to be a bounded, open subset of $\mathbb{R}^3$ with a $\mathrm{C}^2$-boundary. Then
\begin{enumerate}
    \item The single layer heat operator $\mathcal{S}\big[u\big](\mathrm{x},t) := \frac{1}{\alpha}  \displaystyle\int_\mathbb{R}\int_{\partial\Omega} \Phi(\mathrm{x},t;\mathrm{y},\tau)\;u(y,\tau) d\sigma_{\mathrm{y}}d\tau,
$ maps $\mathrm{H}^{-\frac{1}{2},-\frac{1}{4}}\big(\partial\Omega\times\mathbb{R}\big) \to \mathrm{H}^{\frac{1}{2},\frac{1}{4}}\big(\partial\Omega\times\mathbb{R}\big)$ isomorphically.
    
    \item The following operators
    \begin{align*}
     \frac{1}{2} I + \mathcal{K} : \mathrm{H}^{\frac{1}{2},\frac{1}{4}}\big(\partial\Omega \times \mathbb{R}\big) \rightarrow \mathrm{H}^{\frac{1}{2},\frac{1}{4}}\big(\partial\Omega \times \mathbb{R}\big) \; \text{and}\; \;
     \frac{1}{2} I + \mathcal{K}^* : \mathrm{H}^{-\frac{1}{2},-\frac{1}{4}}\big(\partial\Omega \times \mathbb{R}\big) \rightarrow \mathrm{H}^{-\frac{1}{2},-\frac{1}{4}}\big(\partial\Omega \times \mathbb{R}\big)
     \end{align*}
     are invertible, where $\mathcal{K}$ and $\mathcal{K}^*$ are the double layer and adjoint double layer operator, which are defined as follows:\\
     $\mathcal{K}\big[u\big](\mathrm{x},t) := \frac{1}{\alpha}  \displaystyle\int_\mathbb{R}\int_{\partial\Omega} \bm{\mathrm{D}_{\nu_\mathrm{y}}}\Phi(\mathrm{x},t;\mathrm{y},\tau)\;u(y,\tau) d\sigma_{\mathrm{y}}d\tau$ ; $\mathcal{K}^*\big[u\big](\mathrm{x},t) := \frac{1}{\alpha}  \displaystyle\int_\mathbb{R}\int_{\partial\Omega} \bm{\mathrm{D}_{\nu_\mathrm{x}}}\Phi(\mathrm{x},t;\mathrm{y},\tau)\;u(y,\tau) d\sigma_{\mathrm{y}}d\tau,$ \\
     respectively.
     
     \item Furthermore, we refer to the Newtonian heat potential associated with the source term $f \in \mathrm{L}^2(\Omega \times \mathbb{R})$ as
    $$ \mathcal{V}\big[f\big](\mathrm{x},t) :=  \displaystyle\int_{-\infty}^{t}\int_{\Omega}\Phi(\mathrm{x},t;\mathrm{y},\tau)f(\mathrm{y},\tau) d\mathrm{y}d\tau. 
    $$
    \begin{enumerate}
        \item The Operator $\bm{\mathrm{D}^-_0}\mathcal{V}:\mathrm{L}^2(\Omega \times \mathbb{R}) \rightarrow \mathrm{H}^{\frac{1}{2},\frac{1}{4}}\big(\partial \Omega \times \mathbb{R}\big)$ defines a linear and bounded operator, where $\mathcal{V}$ is the Newtonian heat potential.
     
     \item The Operator $\bm{\mathrm{D}^-_\nu}\mathcal{V}:\mathrm{L}^2(\Omega \times \mathbb{R}) \rightarrow \mathrm{H}^{-\frac{1}{2},-\frac{1}{4}}\big(\partial \Omega \times \mathbb{R}\big)$ defines a linear and bounded operator, where $\mathcal{V}$ is the Newtonian heat potential.
    \end{enumerate}

     \item Also, let us define the initial heat potential for $f \in \mathrm{L}^2\big(\Omega \big)$ as follows
    \begin{align}
    \mathbb{I}[f](\mathrm{x},\mathrm{t}) = \int_{\Omega}\Phi(\mathrm{x},\mathrm{t};\mathrm{y})f(\mathrm{y})d\mathrm{y},
    \end{align}
    \begin{enumerate}
        \item The initial heat operator $\mathbb{I}:\mathrm{L}^2(\Omega) \rightarrow \mathrm{H}^{1,\frac{1}{2}}\big(\Omega \times \mathbb{R}\big)$ defines a linear and bounded operator.
    \end{enumerate}
     
\end{enumerate}
\end{lemma}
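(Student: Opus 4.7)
The plan is to deduce each of the four items from the singular estimates (\ref{singularities}) combined with standard techniques in the anisotropic Sobolev setting $\mathrm{H}^{\frac{1}{2},\frac{1}{4}}$ and $\mathrm{H}^{1,\frac{1}{2}}$. The guiding idea is to pass to the Fourier variable $\tau$ dual to time $t$, so that the heat kernel becomes the modified Helmholtz kernel with parameter $i\alpha\tau$; then known elliptic layer-potential results can be lifted back to the parabolic setting, with the fractional time order $\tfrac{1}{4}$ arising because it is exactly half of the spatial order $\tfrac{1}{2}$, matching the parabolic scaling.

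For (1), I first establish boundedness $\mathcal{S}\colon \mathrm{H}^{-\frac{1}{2},-\frac{1}{4}}\to \mathrm{H}^{\frac{1}{2},\frac{1}{4}}$ by applying the first line of (\ref{singularities}) with $\mathrm{r}$ slightly below $\tfrac{1}{2}$ and combining it with the parabolic Calderón--Zygmund calculus on $\partial\Omega\times\mathbb{R}$; the time-fractional trace can be controlled by the estimate for $\partial_t^{1/2}\Phi$. Invertibility is then obtained from the coercivity estimate
\[
\mathrm{Re}\int_\mathbb{R}\int_{\partial\Omega} \mathcal{S}[u]\,\overline{u}\,d\sigma\,dt \;\gtrsim\; \Vert u\Vert_{\mathrm{H}^{-\frac{1}{2},-\frac{1}{4}}(\partial\Omega\times\mathbb{R})}^{2},
\]
proved by Plancherel in $t$: for each $\tau$, the time-Fourier symbol $\widehat{\mathcal{S}}(\tau)$ is the single layer of $-\Delta + i\alpha\tau$, whose positivity on $\mathrm{H}^{-1/2}(\partial\Omega)$ is classical and is uniform enough in $\tau$ to integrate the inequality back in $\tau$.

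For (2), I use the standard jump relations $\bm{\mathrm{D}_\nu^\pm}\mathcal{S}[u]=(\mp\tfrac{1}{2}\mathrm{I}+\mathcal{K}^\ast)[u]$ on $\partial\Omega\times\mathbb{R}$. Fredholmness of $\tfrac{1}{2}\mathrm{I}+\mathcal{K}^\ast$ on $\mathrm{H}^{-\frac{1}{2},-\frac{1}{4}}$ is obtained either by writing it as $\bm{\mathrm{D}_\nu^+}\mathcal{S}\circ \mathcal{S}^{-1}$ modulo compact terms, or by observing that the time-Fourier family $\widehat{\mathcal{K}^\ast}(\tau)$ is a family of Calderón-type operators admitting a Fredholm inverse with norm uniformly bounded in $\tau$. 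Injectivity follows from uniqueness of the exterior Neumann problem for the heat equation under the boundedness assumption at infinity employed in (\ref{heattran}); the solution $u:=\mathcal{S}[\psi]$ with $(\tfrac{1}{2}\mathrm{I}+\mathcal{K}^\ast)[\psi]=0$ has vanishing exterior Neumann trace, hence vanishes outside $\Omega$, and then vanishing Dirichlet trace from (1) forces $\psi=0$ via the interior Neumann problem. Invertibility of $\tfrac{1}{2}\mathrm{I}+\mathcal{K}$ on $\mathrm{H}^{\frac{1}{2},\frac{1}{4}}$ then follows by duality.

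For (3), I would invoke maximal parabolic regularity for the Newtonian heat potential, namely $\mathcal{V}\colon \mathrm{L}^2(\Omega\times\mathbb{R})\to \mathrm{H}^{2,1}(\mathbb{R}^3\times\mathbb{R})$ with a bound independent of $\Omega$; the anisotropic trace theorem (Lions--Magenes, adapted by Costabel and Noon) then delivers $\bm{\mathrm{D}_0^-}\mathcal{V}[f]\in\mathrm{H}^{\frac{3}{2},\frac{3}{4}}(\partial\Omega\times\mathbb{R})\hookrightarrow \mathrm{H}^{\frac{1}{2},\frac{1}{4}}$ and $\bm{\mathrm{D}_\nu^-}\mathcal{V}[f]\in\mathrm{H}^{\frac{1}{2},\frac{1}{4}}\hookrightarrow\mathrm{H}^{-\frac{1}{2},-\frac{1}{4}}$. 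For (4), parabolic smoothing for the Cauchy problem with initial datum $f\in\mathrm{L}^2(\Omega)$ yields $\mathbb{I}[f]\in\mathrm{L}^2(\mathbb{R};\mathrm{H}^1)\cap\mathrm{H}^{\frac{1}{2}}(\mathbb{R};\mathrm{L}^2)$, which is precisely the definition of $\mathrm{H}^{1,\frac{1}{2}}$. The main analytical obstacle throughout is verifying the $\tfrac{1}{4}$-order time regularity in the trace and jump statements; for that I would either reduce to the half-space model and characterize $\mathrm{H}^{\frac{1}{2},\frac{1}{4}}$ via the parabolic Littlewood--Paley decomposition, or appeal directly to the Costabel/Noon framework cited in \cite{costabel, Noonthe, dohr}.
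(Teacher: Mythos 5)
The paper does not actually prove this lemma: it is stated as a recollection of known mapping properties of the heat layer potentials, with the proof delegated wholesale to the cited literature (\cite{costabel, Noonthe, dohr, hofman, lions2}). Your proposal, therefore, cannot be compared to an in-paper argument; what it does is reconstruct, correctly in outline, precisely the method of those references — Fourier transform in time, reduction to the parameter-dependent elliptic family $-\Delta+i\alpha\tau$, coercivity of the associated single layer, jump relations plus uniqueness for invertibility of $\frac12 I+\mathcal{K}$ and $\frac12 I+\mathcal{K}^*$, and maximal regularity $\mathcal{V}\colon \mathrm{L}^2(\Omega\times\mathbb{R})\to\mathrm{H}^{2,1}$ combined with the anisotropic trace theorem for items (3) and (4). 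This is the standard Costabel/Noon route, so on the level of strategy your sketch is the right one and is consistent with what the paper implicitly relies on.

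Two points need tightening before this counts as a proof. First, the phrase that the positivity of the single layer of $-\Delta+i\alpha\tau$ is ``uniform enough in $\tau$ to integrate back'' conceals the entire technical content of item (1): the coercivity constant of the elliptic single layer in the \emph{standard} $\mathrm{H}^{\pm\frac12}(\partial\Omega)$ norms degenerates as $|\tau|\to\infty$, and the argument only closes if one works with the parameter-dependent norms carrying the weight $\big(|\xi'|^2+|\tau|\big)^{\pm\frac14}$; it is exactly the integration of these $\tau$-weighted inequalities that reproduces the $\mathrm{H}^{-\frac12,-\frac14}$ norm and explains the time order $\frac14$. As written, the inequality you propose to integrate is not available with a $\tau$-independent constant, so this step must be restated in the weighted framework of \cite{costabel}. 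Second, in item (2) there is an interior/exterior slip: with your own jump relation $\bm{\mathrm{D}^{\pm}_\nu}\mathcal{S}[u]=\big(\mp\tfrac12 \mathrm{I}+\mathcal{K}^*\big)[u]$, the hypothesis $\big(\tfrac12 \mathrm{I}+\mathcal{K}^*\big)[\psi]=0$ annihilates the \emph{interior} Neumann trace of $u=\mathcal{S}[\psi]$, not the exterior one; the uniqueness chain should run: interior Neumann uniqueness gives $u\equiv 0$ in $\Omega\times\mathbb{R}$, continuity of the single layer then gives vanishing exterior Dirichlet datum, exterior uniqueness (using causality, i.e.\ vanishing as $t\to-\infty$, which the potentials have by construction) gives $u\equiv 0$ outside, and finally $\psi$ equals the jump of the Neumann traces, hence $\psi=0$. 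Both defects are repairable without changing the architecture, and items (3) and (4) are correct as sketched.
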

\noindent
In the following, we describe the integral operators' and the Sobolev spaces' essential scaling properties. The proofs of these properties can be obtained in a similar manner, with no essential difference, to those described in  \cite[Section 7.5]{AM} where they are performed in the 2-dimensional case.
\\

\noindent
We consider a nanoparticle that is located within $\Omega = \delta \mathrm{B} + \mathrm{z}\subseteq\mathbb{R}^3$, where $\mathrm{B}$ is centered at the origin and $|\mathrm{B}| \sim 1.$ Let us define the functions $\varphi$ and $\psi$ on $\partial\Omega\times \mathbb{R}$ and $\partial \mathrm{B}\times \mathbb{R}$, respectively, using the notation below
\begin{equation*}
\hat{\varphi}( \eta, \Tilde{\tau}) = \varphi^{\Lambda}( \eta, \Tilde{\tau}) := \varphi(\delta\eta + \mathrm{z}, \alpha\delta^2\Tilde{\tau}), \quad \quad \quad  \check{\psi}(\mathrm{x}, t) = \psi^\vee(\mathrm{x}, t) := \psi\big(\frac{\mathrm{x}-\mathrm{z}}{\delta}, \frac{t}{\alpha\delta^2}\big) \end{equation*}
for $(\mathrm{x},t) \in \partial\Omega \times\mathbb{R} $ and $(\eta,\Tilde{\tau}) \in \partial B\times \mathbb{R}$ respectively. Suppose $0<\delta\leq 1$ and $t := \alpha \delta^2\Tilde{t} $. Then, we assert the next two lemmas, which, respectively, correspond to the used function spaces and integral operators.
\begin{lemma}\label{l3.1}
\begin{enumerate}
\item for $\varphi \in \mathrm{H}^{\frac{1}{2},\frac{1}{4}}\big(\partial\Omega\times \mathbb{R}\big) $  and $\psi \in \mathrm{H}^{-\frac{1}{2},-\frac{1}{4}}\big(\partial\Omega\times \mathbb{R}\big)$, we have the following scales
\begin{align}
    \begin{cases}
    \alpha^\frac{1}{2} \delta^2\big\Vert\hat{\varphi}\big\Vert_{\mathrm{H}^{\frac{1}{2},\frac{1}{4}}\big(\partial \mathrm{B}\times \mathbb{R}\big)} \leq \big\Vert\varphi\big\Vert_{\mathrm{H}^{\frac{1}{2},\frac{1}{4}}\big(\partial\Omega\times \mathbb{R}\big)} \leq \alpha^{\frac{1}{4}}\delta^\frac{3}{2}\big\Vert\hat{\varphi}\big\Vert_{\mathrm{H}^{\frac{1}{2},\frac{1}{4}}\big(\partial \mathrm{B}\times \mathbb{R}\big)}
    \\ 
    \alpha^{\frac{3}{4}}\delta^\frac{5}{2}\big\Vert\hat{\psi}\big\Vert_{\mathrm{H}^{-\frac{1}{2},-\frac{1}{4}}\big(\partial \mathrm{B}\times \mathbb{R}\big) }\leq \big\Vert\varphi\big\Vert_{\mathrm{H}^{-\frac{1}{2},-\frac{1}{4}}\big(\partial\Omega\times \mathbb{R}\big)} \leq \alpha^{\frac{1}{2}}\delta^2\big\Vert\hat{\psi}\big\Vert_{\mathrm{H}^{-\frac{1}{2},-\frac{1}{4}}\big(\partial \mathrm{B}\times \mathbb{R}\big)}.
    \end{cases}
\end{align}
    \item for $\partial_t\varphi \in \mathrm{H}^{-\frac{1}{2},-\frac{1}{4}}\big(\partial \Omega\times\mathbb{R}\big)$ we have the following scales
    \begin{equation}
      \alpha^{-\frac{1}{4}}\delta^\frac{1}{2}\big\Vert \partial_{\Tilde{t}}\hat{\varphi}\big\Vert_{\mathrm{H}^{-\frac{1}{2},-\frac{1}{4}}\big(\partial \mathrm{B}\times \mathbb{R}\big)} \leq \big\Vert \partial_t\varphi\big\Vert_{\mathrm{H}^{-\frac{1}{2},-\frac{1}{4}}\big(\partial \Omega\times \mathbb{R}\big)} \leq  \alpha^{-\frac{1}{2}}\big\Vert \partial_{\Tilde{t}}\hat{\varphi}\big\Vert_{\mathrm{H}^{-\frac{1}{2},-\frac{1}{4}}\big(\partial \mathrm{B}\times \mathbb{R}\big)}.  
    \end{equation}
    \item for $\varphi \in H^{1,\frac{1}{2}}\big(\Omega \times\mathbb{R}_+\big) $ and $\psi \in H^{-1,-\frac{1}{2}}\big(\Omega\times\mathbb{R}_+\big) $ we have the following scales
    \begin{align}
        \begin{cases}
        \alpha^{\frac{1}{2}}\delta^\frac{5}{2}\big\Vert \hat{\varphi}\big\Vert_{\mathrm{H}^{1,\frac{1}{2}}\big(\mathrm{B}\times\mathbb{R}_+\big)} \leq \big\Vert \varphi\big\Vert_{\mathrm{H}^{1,\frac{1}{2}}\big(\Omega\times\mathbb{R}_+\big)} \leq \delta^\frac{3}{2}\big\Vert \hat{\varphi}\big\Vert_{\mathrm{H}^{1,\frac{1}{2}}\big( \mathrm{B}\times\mathbb{R}_+\big)}
        \\
        \alpha\delta^\frac{7}{2}\big\Vert \hat{\psi}\big\Vert_{H^{-1,-\frac{1}{2}}\big( \mathrm{B}\times\mathbb{R}_+\big)} \leq \big\Vert \psi\big\Vert_{H^{-1,-\frac{1}{2}}\big(\Omega\times\mathbb{R}_+\big)} \leq  \alpha^{\frac{1}{2}}\delta^\frac{5}{2}\big\Vert \hat{\psi}\big\Vert_{H^{-1,-\frac{1}{2}}\big(\mathrm{B}\times\mathbb{R}_+\big)}.
        \end{cases}
    \end{align}
\end{enumerate}
\end{lemma}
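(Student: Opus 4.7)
The plan is to deduce every inequality from one change of variables, $\mathrm{x}=\delta\eta+\mathrm{z}$ and $t=\alpha\delta^{2}\tilde{t}$, applied piece by piece to the anisotropic Sobolev norms. The relevant identities are $d\mathrm{x}=\delta^{3}d\eta$, $d\sigma_{\mathrm{x}}=\delta^{2}d\sigma_{\eta}$, $dt=\alpha\delta^{2}d\tilde t$, $|\mathrm{x}-\mathrm{y}|=\delta|\eta-\zeta|$, $|t-\tau|=\alpha\delta^{2}|\tilde t-\tilde\tau|$ and $\nabla_{\mathrm{x}}=\delta^{-1}\nabla_{\eta}$. Substituting into the three summands of $\|\varphi\|^{2}_{\mathrm{H}^{\frac{1}{2},\frac{1}{4}}(\partial\Omega\times\mathbb{R})}$ produces, respectively, $\alpha\delta^{4}\|\hat\varphi\|^{2}_{\mathrm{L}^{2}(\mathrm{L}^{2}(\partial \mathrm{B}))}$, $\alpha\delta^{3}[\hat\varphi]^{2}_{\mathrm{L}^{2}(\mathrm{H}^{\frac{1}{2}}(\partial\mathrm{B}))}$ and $\alpha^{\frac{1}{2}}\delta^{3}[\hat\varphi]^{2}_{\mathrm{H}^{\frac{1}{4}}(\mathrm{L}^{2}(\partial\mathrm{B}))}$, the mixed $\alpha^{1/2}$ arising from the factor $(\alpha\delta^{2})^{-3/2}$ in the Gagliardo time kernel. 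Factoring out the smallest and largest of these three powers and using $\delta\leq 1$ together with the regime $\alpha_{\mathrm{m}}\sim 1$ implicit in (\ref{Assumption-heat-coefficients-section 3D}) to absorb the residual terms gives exactly $\alpha^{\frac{1}{2}}\delta^{2}\|\hat\varphi\|\leq\|\varphi\|\leq\alpha^{\frac{1}{4}}\delta^{\frac{3}{2}}\|\hat\varphi\|$. The volume norm of part (3) follows by the same procedure, with the additional $\delta^{-1}$ coming from the gradient: the three pieces of $\|\varphi\|^{2}_{\mathrm{H}^{1,\frac{1}{2}}(\Omega\times\mathbb{R}_{+})}$ scale as $\alpha\delta^{5}\|\hat\varphi\|^{2}_{\mathrm{L}^{2}(\mathrm{L}^{2})}$, $\alpha\delta^{3}\|\nabla_{\eta}\hat\varphi\|^{2}_{\mathrm{L}^{2}(\mathrm{L}^{2})}$ and $\delta^{3}[\hat\varphi]^{2}_{\mathrm{H}^{\frac{1}{2}}(\mathrm{L}^{2})}$, whose extrema yield the advertised $\alpha^{\frac{1}{2}}\delta^{\frac{5}{2}}$ and $\delta^{\frac{3}{2}}$.

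\textbf{Dual norms and the time derivative.} The $\psi$-estimates in (1) and (3) are obtained by duality. The $\mathrm{L}^{2}$-pairings rescale explicitly as $\langle\psi,\varphi\rangle_{\partial\Omega\times\mathbb{R}}=\alpha\delta^{4}\langle\hat\psi,\hat\varphi\rangle_{\partial\mathrm{B}\times\mathbb{R}}$ and $\langle\psi,\varphi\rangle_{\Omega\times\mathbb{R}}=\alpha\delta^{5}\langle\hat\psi,\hat\varphi\rangle_{\mathrm{B}\times\mathbb{R}}$; writing the negative-order norms as suprema over normalised positive-order test functions and dividing by the already-established two-sided positive-order inequalities produces, with the two factors swapping roles, the stated pairs $(\alpha^{\frac{3}{4}}\delta^{\frac{5}{2}},\,\alpha^{\frac{1}{2}}\delta^{2})$ in (1) and $(\alpha\delta^{\frac{7}{2}},\,\alpha^{\frac{1}{2}}\delta^{\frac{5}{2}})$ in (3). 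Finally, (2) is a one-line corollary of the $\mathrm{H}^{-\frac{1}{2},-\frac{1}{4}}$ bound of (1): the chain rule gives $\widehat{\partial_{t}\varphi}=(\alpha\delta^{2})^{-1}\partial_{\tilde t}\hat\varphi$, and combining this prefactor with the bounds of (1) applied to $\partial_{t}\varphi$ produces exactly the $\alpha^{-\frac{1}{4}}\delta^{\frac{1}{2}}$ and $\alpha^{-\frac{1}{2}}$ factors on either side.

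\textbf{Main difficulty.} The principal technical point is the anisotropy: each summand of every anisotropic norm picks up a different pair of exponents $(\alpha^{a},\delta^{b})$, so the inequalities are never termwise sharp and can only be closed after factoring out the extremal power in each of the two variables and then invoking $\delta\leq 1$ together with $\alpha\sim 1$ to absorb the residuals. Propagating these exponents through the duality argument for the $\psi$-norms in a way that preserves the sharpness of the underlying positive-order bounds, and matching them against the chain-rule prefactor in part (2), is the one place where one must carry out the joint $(\alpha,\delta)$-bookkeeping carefully.
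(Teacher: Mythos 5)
Your proposal is correct and follows essentially the same route as the paper, which gives no self-contained proof here but defers to the identical change-of-variables computation (rescaling each summand of the anisotropic Gagliardo norms, duality for the negative-order norms, and the chain-rule prefactor $(\alpha\delta^2)^{-1}$ for part (2)) carried out in the 2D setting in \cite[Section 7.5]{AM}; your rescaled summands $\alpha\delta^4,\ \alpha\delta^3,\ \alpha^{1/2}\delta^3$ on $\partial\Omega\times\mathbb{R}$ and $\alpha\delta^5,\ \alpha\delta^3,\ \delta^3$ on $\Omega\times\mathbb{R}_+$, the extremal pairs they produce, and the dual pairs $(\alpha^{3/4}\delta^{5/2},\alpha^{1/2}\delta^2)$ and $(\alpha\delta^{7/2},\alpha^{1/2}\delta^{5/2})$ obtained from the pairings $\alpha\delta^4\langle\hat\psi,\hat\varphi\rangle$ and $\alpha\delta^5\langle\hat\psi,\hat\varphi\rangle$ all check out. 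One cosmetic repair: the ordering of the three summands that yields the stated two-sided bounds requires $\delta\le 1$ together with $\alpha\lesssim 1$ — which holds in both regimes the lemma is used in, $\alpha_{\mathrm{p}}\sim\delta^2$ and $\alpha_{\mathrm{m}}\sim 1$ — rather than the condition $\alpha_{\mathrm{m}}\sim 1$ you invoke, since the lemma is applied with $\alpha=\alpha_{\mathrm{p}}\ll 1$ as well.
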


\begin{lemma}\label{l3.2}

\begin{enumerate}
    \item for $\varphi \in \mathrm{H}^{\frac{1}{2},\frac{1}{4}}\big(\partial \Omega\times \mathbb{R}\big)$ and $\psi \in \mathrm{H}^{-\frac{1}{2},-\frac{1}{4}}\big(\partial \Omega\times \mathbb{R}\big)$, we have the following estimate
\begin{align}
    \mathcal{S}_{\partial \Omega\times \mathbb{R}}\big[\psi\big](\mathrm{x},t) =  \delta \big(\Tilde{\mathcal{S}}_{\partial B\times \mathbb{R}}\big[\hat{\psi}\big]\big)^\vee \ \text{and} \
    \mathcal{S}^{-1}_{\partial \Omega\times \mathbb{R}}\big[\varphi\big](\mathrm{x},t) =  \delta^{-1} \big(\Tilde{\mathcal{S}}^{-1}_{\partial B\times \mathbb{R}}\big[\hat{\varphi}\big]\big)^\vee.
\end{align}
The following estimate is produced using the aforementioned estimates.
\begin{align}\label{operatoresti3D}
    \big\Vert \mathcal{S}^{-1}_{\partial \Omega\times \mathbb{R}} \big\Vert_{\mathcal{L}{\big( \mathrm{H}^{\frac{1}{2},\frac{1}{4}}\big(\partial \Omega\times \mathbb{R}\big), \mathrm{H}^{-\frac{1}{2},-\frac{1}{4}}\big(\partial \Omega\times \mathbb{R}\big)}\big) } \le \delta^{-1} \big\Vert \Tilde{\mathcal{S}}^{-1}_{\partial B\times \mathbb{R}} \big\Vert_{\mathcal{L}{\big( \mathrm{H}^{\frac{1}{2},\frac{1}{4}}\big(\partial B\times \mathbb{R}\big), \mathrm{H}^{-\frac{1}{2},-\frac{1}{4}}\big(\partial B\times \mathbb{R}}\big)\big)},
\end{align}
where, $\Tilde{\mathcal{S}}$ represents the single layer operator corresponding to the fundamental solution with $\alpha:=1$.
\item for $\psi \in \mathrm{H}^{-\frac{1}{2},-\frac{1}{4}}\big(\partial \Omega \times \mathbb{R}\big) $ we have
\begin{align}
    \alpha^\frac{1}{2} \delta^3\big\Vert \Tilde{\mathcal{S}}\big[\hat{\psi}\big]\big\Vert_{\mathrm{H}^{\frac{1}{2},\frac{1}{4}}\big(\partial B\times \mathbb{R}\big)} \leq \big\Vert \mathcal{S}\big[\psi\big]\big\Vert_{\mathrm{H}^{\frac{1}{2},\frac{1}{4}}\big(\partial \Omega\times \mathbb{R}\big)} \leq  \alpha^{\frac{1}{4}}\delta^\frac{5}{2} \big\Vert \Tilde{\mathcal{S}}\big[\hat{\psi}\big]\big\Vert_{\mathrm{H}^{\frac{1}{2},\frac{1}{4}}\big(\partial B\times \mathbb{R}\big)}.
\end{align}
\item for $\psi \in \mathrm{L}^2\big(\Omega\times\mathbb{R}\big) $ we have the following identities
\begin{align}
\begin{cases}    \mathcal{V}_{\Omega\times\mathbb{R}}\big[\psi\big](\mathrm{x},t) =  \delta^2 \big(\Tilde{\mathcal{V}}_{\mathrm{B}\times\mathbb{R}}\big[\hat{\psi}\big]\big)^\vee
\\
    \alpha^\frac{1}{2} \delta^4\big\Vert \Tilde{\mathcal{V}}\big[\hat{\psi}\big]\big\Vert_{\mathrm{H}^{\frac{1}{2},\frac{1}{4}}\big(\partial \mathrm{B}\times \mathbb{R}\big)} \leq \big\Vert \mathcal{V}\big[\psi\big]\big\Vert_{\mathrm{H}^{\frac{1}{2},\frac{1}{4}}\big(\partial \Omega\times \mathbb{R}\big)} \leq  \alpha^{\frac{1}{4}}\delta^\frac{7}{2} \big\Vert \Tilde{\mathcal{V}}\big[\hat{\psi}\big]\big\Vert_{\mathrm{H}^{\frac{1}{2},\frac{1}{4}}\big(\partial \mathrm{B}\times \mathbb{R}\big)}.
    \end{cases}
\end{align}
\item for $ \psi \in \mathrm{L}^{2}\big(\Omega \times \mathbb{R}\big) $ we have the following
\begin{align}
\begin{cases}
    \bm{\mathrm{D}_{\nu,\mathrm{x}}}\mathcal{V}_{\Omega \times\mathbb{R}}\big[\psi\big](\mathrm{x},t) =  \delta \big(\bm{\mathrm{D}_{\nu,\mathrm{\xi}}}\Tilde{\mathcal{V}}_{B\times\mathbb{R}}\big[\hat{\psi}\big]\big)^\vee
\\
    \alpha^{\frac{3}{4}} \delta^\frac{7}{2}\big\Vert \bm{\mathrm{D}_{\nu,\mathrm{\xi}}}\Tilde{\mathcal{V}}\big[\hat{\psi}\big]\big\Vert_{\mathrm{H}^{-\frac{1}{2},-\frac{1}{4}}\big(\partial \mathrm{B}\times \mathbb{R}\big)} \leq \big\Vert \bm{\mathrm{D}_{\nu,\mathrm{x}}}\mathcal{V}\big[\psi\big]\big\Vert_{\mathrm{H}^{-\frac{1}{2},-\frac{1}{4}}\big(\partial \Omega\times \mathbb{R}\big)}\leq  \alpha^{\frac{1}{^2}}\delta^3 \big\Vert \bm{\mathrm{D}_{\nu,\mathrm{\xi}}}\Tilde{\mathcal{V}}\big[\hat{\psi}\big]\big\Vert_{\mathrm{H}^{-\frac{1}{2},-\frac{1}{4}}\big(\partial \mathrm{B}\times \mathbb{R}\big)}.
    \end{cases}
\end{align}
\item for $\psi \in \mathrm{L}^2\big(\Omega\big) $ we have the following
\begin{align}
    \alpha^{\frac{1}{2}} \delta^\frac{5}{2}\big\Vert \Tilde{\mathbb{I}}\big[\hat{\psi}\big]\big\Vert_{\mathrm{H}^{1,\frac{1}{2}}\big(\mathrm{B}\times\mathbb{R}_+\big)} \leq \big\Vert \mathbb{I}\big[\psi\big]\big\Vert_{\mathrm{H}^{1,\frac{1}{2}}\big(\Omega\times\mathbb{R}_+\big)} \leq  \alpha^{\frac{1}{2}}\delta^\frac{3}{2} \big\Vert \Tilde{\mathbb{I}}\big[\hat{\psi}\big]\big\Vert_{\mathrm{H}^{1,\frac{1}{2}}\big(\mathrm{B}\times\mathbb{R}_+\big)}.
\end{align}
\end{enumerate}
\end{lemma}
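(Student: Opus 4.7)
The entire lemma rests on one elementary scaling identity for the heat kernel: with $\mathrm{x}=\delta\eta+\mathrm{z}$, $\mathrm{y}=\delta\xi+\mathrm{z}$, $t=\alpha\delta^2\tilde{t}$, and $\tau=\alpha\delta^2\tilde{\tau}$, direct substitution yields
\begin{equation*}
\Phi(\mathrm{x},t;\mathrm{y},\tau)=\delta^{-3}\,\tilde{\Phi}(\eta,\tilde{t};\xi,\tilde{\tau}),
\end{equation*}
where $\tilde{\Phi}$ is the fundamental solution of $\partial_{\tilde{t}}-\Delta$ (the $\alpha=1$ case). The plan is to feed this identity into each of the five operator definitions, perform the change of variables simultaneously in the (hyper)surface or volume measure ($d\sigma_{\mathrm{y}}=\delta^2\,d\sigma_\xi$, $d\mathrm{y}=\delta^3\,d\xi$) and in the time variable ($d\tau=\alpha\delta^2\,d\tilde{\tau}$), read off the pointwise scaling identities that relate operators on $\partial\Omega\times\mathbb{R}$ (or $\Omega\times\mathbb{R}$) to those on $\partial\mathrm{B}\times\mathbb{R}$ (or $\mathrm{B}\times\mathbb{R}$), and finally combine them with Lemma \ref{l3.1} to extract the two-sided Sobolev-norm estimates. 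This follows the same template as the 2D argument in \cite[Section 7.5]{AM}.

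For item (1), substituting the kernel identity into $\mathcal{S}[\psi]=\alpha^{-1}\int_{\mathbb{R}}\int_{\partial\Omega}\Phi\,\psi\,d\sigma_{\mathrm{y}}\,d\tau$ collapses the Jacobian prefactor to $\alpha^{-1}\cdot\delta^{-3}\cdot\delta^2\cdot\alpha\delta^2=\delta$, whence $\mathcal{S}[\psi]=\delta(\tilde{\mathcal{S}}[\hat{\psi}])^\vee$; formal inversion gives $\mathcal{S}^{-1}[\varphi]=\delta^{-1}(\tilde{\mathcal{S}}^{-1}[\hat{\varphi}])^\vee$. For the Newton volume potential in item (3), the prefactor becomes $\delta^{-3}\cdot\delta^3\cdot\alpha\delta^2=\alpha\delta^2$, which under the convention that $\tilde{\mathcal{V}}$ corresponds to $\alpha=1$ reads $\mathcal{V}[\psi]=\delta^2(\tilde{\mathcal{V}}[\hat{\psi}])^\vee$. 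For item (4), since $\nabla_{\mathrm{x}}=\delta^{-1}\nabla_\eta$, differentiating the $\mathcal{V}$-identity contributes an extra factor $\delta^{-1}$, producing $\bm{\mathrm{D}_{\nu,\mathrm{x}}}\mathcal{V}[\psi]=\delta(\bm{\mathrm{D}_{\nu,\xi}}\tilde{\mathcal{V}}[\hat{\psi}])^\vee$. Item (5) for the initial potential is obtained analogously.

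With the pointwise identities in place, each two-sided $\mathrm{H}$-norm estimate in items (2)-(5) follows by pairing with Lemma \ref{l3.1}. For example, $\mathcal{S}[\psi]=\delta(\tilde{\mathcal{S}}[\hat{\psi}])^\vee$ combined with Lemma \ref{l3.1}(1) immediately delivers $\alpha^{1/2}\delta^3\|\tilde{\mathcal{S}}[\hat{\psi}]\|_{\mathrm{H}^{1/2,1/4}(\partial\mathrm{B}\times\mathbb{R})}\le\|\mathcal{S}[\psi]\|_{\mathrm{H}^{1/2,1/4}(\partial\Omega\times\mathbb{R})}\le\alpha^{1/4}\delta^{5/2}\|\tilde{\mathcal{S}}[\hat{\psi}]\|_{\mathrm{H}^{1/2,1/4}(\partial\mathrm{B}\times\mathbb{R})}$, which is item (2). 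The operator-norm bound \eqref{operatoresti3D} is obtained by chaining three inequalities: $\|\mathcal{S}^{-1}[\varphi]\|_{\mathrm{H}^{-1/2,-1/4}(\partial\Omega\times\mathbb{R})}\le\alpha^{1/2}\delta^2\|\widehat{\mathcal{S}^{-1}[\varphi]}\|_{\mathrm{H}^{-1/2,-1/4}(\partial\mathrm{B}\times\mathbb{R})}$, substitute $\widehat{\mathcal{S}^{-1}[\varphi]}=\delta^{-1}\tilde{\mathcal{S}}^{-1}[\hat{\varphi}]$, apply the operator norm of $\tilde{\mathcal{S}}^{-1}$, and finally bound $\|\hat{\varphi}\|_{\mathrm{H}^{1/2,1/4}(\partial\mathrm{B}\times\mathbb{R})}\le(\alpha^{1/2}\delta^2)^{-1}\|\varphi\|_{\mathrm{H}^{1/2,1/4}(\partial\Omega\times\mathbb{R})}$ via the lower bound in Lemma \ref{l3.1}(1); the $\alpha^{1/2}\delta^2$ factors telescope and leave $\delta^{-1}\|\tilde{\mathcal{S}}^{-1}\|$ as claimed. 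The main (modest) difficulty is precisely this bookkeeping: because the anisotropic Sobolev norms decompose into local and nonlocal parts that rescale with different powers of $\delta$, Lemma \ref{l3.1} furnishes only two-sided inequalities with a nonzero gap, and one must pair each pointwise operator identity with the correct side of the norm inequality in order to recover exactly the advertised powers of $\delta$ stated in items (2)-(5).
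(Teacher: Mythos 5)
Your overall route is exactly the one the paper intends: the paper never writes out a proof of Lemma \ref{l3.2}, saying only that it follows ``with no essential difference'' from the 2D computations of \cite[Section 7.5]{AM}, and your three-dimensional adaptation --- the kernel identity $\Phi(\mathrm{x},t;\mathrm{y},\tau)=\delta^{-3}\tilde{\Phi}(\eta,\tilde{t};\xi,\tilde{\tau})$, the Jacobians $d\sigma_{\mathrm{y}}=\delta^{2}d\sigma_{\xi}$, $d\mathrm{y}=\delta^{3}d\xi$, $d\tau=\alpha\delta^{2}d\tilde{\tau}$, the resulting pointwise operator identities, and the pairing of each identity with the appropriate side of the two-sided norm inequalities of Lemma \ref{l3.1} --- is precisely that computation. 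Your chaining for the operator-norm bound \eqref{operatoresti3D} (upper bound of Lemma \ref{l3.1}(1) for the $\mathrm{H}^{-\frac{1}{2},-\frac{1}{4}}$ norm, the identity $\widehat{\mathcal{S}^{-1}[\varphi]}=\delta^{-1}\tilde{\mathcal{S}}^{-1}[\hat{\varphi}]$, then the lower bound of Lemma \ref{l3.1}(1) for the $\mathrm{H}^{\frac{1}{2},\frac{1}{4}}$ norm, with the $\alpha^{1/2}\delta^{2}$ factors telescoping) is correct, as is your closing remark that the two-sided inequalities must be used on the correct sides to recover the stated powers of $\delta$.

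One step needs repair rather than the hand-wave you gave it: in item (3) your Jacobian count correctly produces the prefactor $\alpha\delta^{2}$, and the surplus $\alpha$ cannot be removed ``under the convention that $\tilde{\mathcal{V}}$ corresponds to $\alpha=1$'', because that $\alpha$ comes from the time Jacobian $d\tau=\alpha\delta^{2}d\tilde{\tau}$, not from the kernel. The stated identity $\mathcal{V}[\psi]=\delta^{2}(\tilde{\mathcal{V}}[\hat{\psi}])^{\vee}$ is true only if $\mathcal{V}$ carries the same $\frac{1}{\alpha}$ prefactor as $\mathcal{S}$ and $\mathcal{K}$ --- which is evidently the intended convention, since in the representation formula for $\mathrm{U}_{\mathrm{e}}$ the factor $\frac{1}{\alpha_{\mathrm{m}}}$ is written out explicitly --- even though the paper's displayed definition of $\mathcal{V}$ omits it. With that definition taken literally, items (3) and (4) each acquire an extra factor $\alpha$, and since $\alpha_{\mathrm{p}}\sim\delta^{2}$ in the operative regime \eqref{Assumption-heat-coefficients-section 3D}, such a factor would not be harmless downstream; so you should state the $\frac{1}{\alpha}$ convention explicitly and then your computation yields the claimed identities cleanly. (A similar $\alpha^{1/2}$ mismatch occurs in the upper bound of item (5) versus Lemma \ref{l3.1}(3), again inherited from the paper's statements rather than from your method.)
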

\subsection{A Priori Estimate}
We begin this section by recalling the following boundary integral equations on $\partial\Omega$ as a solution to (\ref{heattran}), that are derived in \cite[Section 2]{AM}
\begin{align}\label{biequation}
 \begin{bmatrix}
\big(\frac{1}{2}I_{d} - \mathcal{K}_{\alpha_{\mathrm{p}}}^{*}\big)\big[\bm{\mathrm{D}^-_\nu}\mathrm{U}_{\mathrm{i}}\big](\mathrm{x},t) = \mathbb{H}_{\alpha_\mathrm{p}}\big[\bm{\mathrm{D}_0}\mathrm{U}_{\mathrm{i}}\big](\mathrm{x},t) + \bm{\mathrm{D}^-_\nu}\mathcal{V}\big[ f \big](\mathrm{x},t) \\
    \bm{\mathrm{D}^-_0}\mathrm{U}_{\mathrm{i}}(\mathrm{x},t) = -\frac{\gamma_{\mathrm{m}}}{\gamma_{\mathrm{p}}} \big(\frac{1}{2}I_{d} + \mathcal{K}_{\alpha_{\mathrm{p}}}\big)^{-1}\mathcal{S}_{\alpha_{\mathrm{p}}} \mathbb{A}^{\textbf{ext}}\big[\bm{\mathrm{D}^-_0}\mathrm{U}_{\mathrm{i}}\big](\mathrm{x},t) + \big(\frac{1}{2}I_{d} + \mathcal{K}_{\alpha_{\mathrm{p}}}\big)^{-1}\bm{\mathrm{D}^-_0}\mathcal{V}\big[ f\big](\mathrm{x},t)
    \end{bmatrix},
\end{align}
associated with the source term $f := \frac{\omega \cdot  \boldsymbol{\Im}(\varepsilon_\mathrm{p})}{2\pi\gamma_{\mathrm{p}}} |\mathrm{E}|^{2}\rchi_{(0,\mathrm{T_0})}$ and we denote $\mathbb{H}$ as the hyper-singular heat operator and the Steklov-Poincar\'e operator by $\mathbb{A}^{\textbf{ext}}$. Then, using the analysis described in \cite[Section 4.1]{AM}, we arrive at the following proposition, making the needed adjustments for the scaling properties indicated in Lemma \ref{l3.1} and Lemma \ref{l3.2}.

\begin{proposition}\label{heatapriori}
The a priori estimate for the solution of boundary integral equations (\ref{biequation}) is as follows:
\begin{align}
    \big\Vert \bm{\mathrm{D}^-_0}\mathrm{U}_{\mathrm{i}} \big\Vert_{\mathrm{H}^{\frac{1}{2},\frac{1}{4}}\big(\partial \Omega\times \mathbb{R}\big)} \lesssim \frac{\omega \cdot \boldsymbol{\Im}(\varepsilon_\mathrm{p})}{2\pi\gamma_{\mathrm{p}}}\alpha_{\mathrm{p}}^{\frac{1}{4}} \delta^2 \big\Vert|\mathrm{E}|^{2}\big\Vert_{\mathrm{L}^2\big(\Omega \big)}.
\end{align}
and
\begin{align}\label{gamma13D}
    \big\Vert \bm{\mathrm{D}^-_\nu}\mathrm{U}_\mathrm{i} \big\Vert_{\mathrm{H}^{-\frac{1}{2},-\frac{1}{4}}\big(\partial \Omega\times \mathbb{R}\big)} \lesssim \frac{\omega \cdot \boldsymbol{\Im}(\varepsilon_\mathrm{p})}{2\pi\gamma_{\mathrm{p}}}\big(\alpha^{\frac{1}{4}}_\mathrm{p}\delta + \alpha^{\frac{1}{2}}_\mathrm{p}\delta^{\frac{3}{2}}\big) \big\Vert|\mathrm{E}|^{2}\big\Vert_{\mathrm{L}^2\big(\Omega\big)}.
\end{align}
\end{proposition}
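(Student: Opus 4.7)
\bigskip
\noindent
\textbf{Proposal for the proof of Proposition \ref{heatapriori}.}
The plan is to derive the two estimates directly from the coupled system (\ref{biequation}), solving first for $\bm{\mathrm{D}^-_0}\mathrm{U}_{\mathrm{i}}$ and then substituting into the equation for $\bm{\mathrm{D}^-_\nu}\mathrm{U}_{\mathrm{i}}$, with every norm estimate carried out by rescaling from $\Omega\times\mathbb{R}$ to the reference configuration $\mathrm{B}\times\mathbb{R}$ via Lemmas \ref{l3.1}--\ref{l3.2}. The source term is controlled by the crude bound
\[
\big\Vert f\big\Vert_{\mathrm{L}^2(\Omega\times\mathbb{R})} \;\lesssim\; \frac{\omega\cdot \boldsymbol{\Im}(\varepsilon_{\mathrm{p}})}{2\pi\gamma_{\mathrm{p}}}\,\big\Vert |\mathrm{E}|^2\big\Vert_{\mathrm{L}^2(\Omega)},
\]
since $f$ is supported on the bounded time window $(0,\mathrm{T}_0)$.

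\medskip
\noindent
For the Dirichlet trace, I would recast the second row of (\ref{biequation}) as the fixed-point equation
\[
\Big(I + \mathrm{T}_\delta\Big)\bm{\mathrm{D}^-_0}\mathrm{U}_{\mathrm{i}} \;=\; \big(\tfrac{1}{2}I+\mathcal{K}_{\alpha_{\mathrm{p}}}\big)^{-1}\bm{\mathrm{D}^-_0}\mathcal{V}[f], \qquad \mathrm{T}_\delta := \frac{\gamma_{\mathrm{m}}}{\gamma_{\mathrm{p}}}\big(\tfrac{1}{2}I+\mathcal{K}_{\alpha_{\mathrm{p}}}\big)^{-1}\mathcal{S}_{\alpha_{\mathrm{p}}}\mathbb{A}^{\textbf{ext}},
\]
on $\mathrm{H}^{\frac{1}{2},\frac{1}{4}}(\partial\Omega\times\mathbb{R})$, and show that $\Vert \mathrm{T}_\delta\Vert \to 0$ as $\delta\to 0$. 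Here the gain $\gamma_{\mathrm{m}}/\gamma_{\mathrm{p}}\sim\delta^{2}$ from (\ref{Assumption-heat-coefficients-section 3D}) will combine with the $\delta$-factor produced by $\mathcal{S}_{\alpha_{\mathrm{p}}}$ in Lemma \ref{l3.2}(1), and with the scale-invariant bound on $(\tfrac{1}{2}I+\mathcal{K}_{\alpha_{\mathrm{p}}})^{-1}$ (which, after pull-back to $\mathrm{B}$, reduces to the invertibility stated in the preceding lemma applied on $\mathrm{B}$), to give $\Vert \mathrm{T}_\delta\Vert\lesssim\delta$. Then $(I+\mathrm{T}_\delta)^{-1}$ is uniformly bounded via Neumann series, and the right-hand side is controlled using the continuity of the Newtonian heat potential: $\bm{\mathrm{D}^-_0}\mathcal{V}:\mathrm{L}^2(\Omega\times\mathbb{R})\to\mathrm{H}^{\frac{1}{2},\frac{1}{4}}(\partial\Omega\times\mathbb{R})$, combined with the scaling identities in Lemmas \ref{l3.1}(3) and \ref{l3.2}(3)--(4) to track the exact $\delta$- and $\alpha_{\mathrm{p}}$-powers. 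Matching scales yields the claimed bound $\lesssim \frac{\omega\cdot \boldsymbol{\Im}(\varepsilon_{\mathrm{p}})}{2\pi\gamma_{\mathrm{p}}}\,\alpha_{\mathrm{p}}^{1/4}\delta^{2}\,\Vert|\mathrm{E}|^2\Vert_{\mathrm{L}^2(\Omega)}$.

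\medskip
\noindent
For the Neumann trace, I would use the first row of (\ref{biequation}) directly, exploiting invertibility of $\tfrac{1}{2}I-\mathcal{K}_{\alpha_{\mathrm{p}}}^{*}$ on $\mathrm{H}^{-\frac{1}{2},-\frac{1}{4}}(\partial\Omega\times\mathbb{R})$:
\[
\bm{\mathrm{D}^-_\nu}\mathrm{U}_{\mathrm{i}} \;=\; \big(\tfrac{1}{2}I-\mathcal{K}_{\alpha_{\mathrm{p}}}^{*}\big)^{-1}\Big[\mathbb{H}_{\alpha_{\mathrm{p}}}\big[\bm{\mathrm{D}^-_0}\mathrm{U}_{\mathrm{i}}\big] \;+\; \bm{\mathrm{D}^-_\nu}\mathcal{V}[f]\Big].
\]
The hyper-singular term is controlled by the Dirichlet-trace estimate already obtained, producing the $\alpha_{\mathrm{p}}^{1/2}\delta^{3/2}$ contribution after rescaling $\mathbb{H}_{\alpha_{\mathrm{p}}}$ through $\mathrm{B}$ (this operator carries a $\delta^{-1}$ scale, which, combined with $\alpha_{\mathrm{p}}^{1/4}\delta^{2}$ from the Dirichlet bound and the appropriate trace-norm conversion factors, yields the second contribution). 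The Newtonian contribution $\bm{\mathrm{D}^-_\nu}\mathcal{V}[f]$ is bounded via the mapping property $\bm{\mathrm{D}^-_\nu}\mathcal{V}:\mathrm{L}^2(\Omega\times\mathbb{R})\to\mathrm{H}^{-\frac{1}{2},-\frac{1}{4}}(\partial\Omega\times\mathbb{R})$ and the scaling identity in Lemma \ref{l3.2}(4), producing the $\alpha_{\mathrm{p}}^{1/4}\delta$ contribution. Summing both terms gives (\ref{gamma13D}).

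\medskip
\noindent
The main obstacle will be controlling the operator $\mathrm{T}_\delta$, because the Steklov--Poincar\'e operator $\mathbb{A}^{\textbf{ext}}$ is defined on the \emph{unbounded} exterior $\mathbb{R}^{3}\setminus\overline{\Omega}$, where the natural rescaling is subtle and the dependence on $\alpha_{\mathrm{m}}$ must be kept uniform in $\delta$. I would handle this by expressing $\mathbb{A}^{\textbf{ext}}$ through the exterior Calder\'on projector (exterior single and double layer potentials with heat kernel $\Phi^{\mathrm{e}}$), applying the scaling identities of Lemma \ref{l3.2} term-by-term, and invoking the assumption $\alpha_{\mathrm{m}}\sim 1$ to keep the constants $\delta$-independent. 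Once this operator-norm estimate is secured, the remainder of the argument is bookkeeping of the scales already collected in Lemmas \ref{l3.1}--\ref{l3.2}, in complete analogy with the two-dimensional argument of \cite[Section 4.1]{AM}.
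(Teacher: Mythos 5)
Your overall architecture does match the paper's proof, which is carried out exactly as you propose: the paper invokes the analysis of \cite[Section 4.1]{AM} for the coupled system (\ref{biequation}) — Dirichlet row solved first by a Neumann-series argument, then substituted into the Neumann row — with all scales re-derived from Lemmas \ref{l3.1}--\ref{l3.2}; your identification of the two contributions in (\ref{gamma13D}), namely $\alpha_\mathrm{p}^{1/4}\delta$ from $\bm{\mathrm{D}^-_\nu}\mathcal{V}[f]$ and $\alpha_\mathrm{p}^{1/2}\delta^{3/2}$ from the hyper-singular term fed by the Dirichlet bound, is also the correct split. However, there is one genuine quantitative error at the heart of your invertibility step. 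You claim $\Vert \mathrm{T}_\delta\Vert\lesssim\delta$, reading the factor $\delta$ off the identity $\mathcal{S}_{\partial\Omega\times\mathbb{R}}[\psi]=\delta\,(\Tilde{\mathcal{S}}[\hat\psi])^\vee$ in Lemma \ref{l3.2}(1). But that identity yields an operator norm only after converting the hatted norms back to $\partial\Omega\times\mathbb{R}$ norms via Lemma \ref{l3.1}, and those conversions carry \emph{negative} powers of $\delta$ and $\alpha_\mathrm{p}$: combining Lemma \ref{l3.2}(2) with Lemma \ref{l3.1}(1) gives
\begin{align}
\nonumber
\big\Vert \mathcal{S}_{\alpha_\mathrm{p}}\big\Vert_{\mathrm{H}^{-\frac{1}{2},-\frac{1}{4}}\to \mathrm{H}^{\frac{1}{2},\frac{1}{4}}} \;\lesssim\; \alpha_\mathrm{p}^{\frac{1}{4}}\delta^{\frac{5}{2}}\cdot \alpha_\mathrm{p}^{-\frac{3}{4}}\delta^{-\frac{5}{2}} \;=\; \alpha_\mathrm{p}^{-\frac{1}{2}} \;\sim\; \delta^{-1},
\end{align}
since $\alpha_\mathrm{p}=\rho_\mathrm{p}\mathrm{c}_\mathrm{p}/\gamma_\mathrm{p}\sim\delta^{2}$ under (\ref{Assumption-heat-coefficients-section 3D}), while the exterior Steklov--Poincar\'e operator scales like $\mathcal{S}^{-1}$ (with $\alpha_\mathrm{m}\sim 1$), i.e. $\Vert\mathbb{A}^{\textbf{ext}}\Vert\lesssim\delta^{-1}$. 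Hence the honest bound is
\begin{align}
\nonumber
\big\Vert \mathrm{T}_\delta\big\Vert \;\lesssim\; \frac{\gamma_\mathrm{m}}{\gamma_\mathrm{p}}\,\alpha_\mathrm{p}^{-\frac{1}{2}}\,\delta^{-1} \;=\; \frac{\gamma_\mathrm{m}}{\sqrt{\overline{\gamma}_\mathrm{p}\,\rho_\mathrm{p}\mathrm{c}_\mathrm{p}}},
\end{align}
a $\delta$-independent constant, not $O(\delta)$: the gain $\gamma_\mathrm{m}/\gamma_\mathrm{p}\sim\delta^{2}$ is exactly cancelled.

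This is why Theorem \ref{mainth} carries the structural hypothesis $\gamma_\mathrm{m}<\sqrt{\overline{\gamma}_\mathrm{p}\,\rho_\mathrm{p}\mathrm{c}_\mathrm{p}}$, which you never invoke — if your bound $\Vert\mathrm{T}_\delta\Vert\lesssim\delta$ were correct, that hypothesis would be superfluous. The repair is straightforward: keep the hypothesis, note that it is precisely the condition making the contraction constant $\gamma_\mathrm{m}/\sqrt{\overline{\gamma}_\mathrm{p}\,\rho_\mathrm{p}\mathrm{c}_\mathrm{p}}$ strictly less than one, and conclude uniform boundedness of $(I+\mathrm{T}_\delta)^{-1}$ from the Neumann series on that basis; the remaining bookkeeping then proceeds as you describe. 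A secondary point worth flagging: your Neumann-trace step inverts $\frac{1}{2}I-\mathcal{K}^{*}_{\alpha_\mathrm{p}}$ on $\mathrm{H}^{-\frac{1}{2},-\frac{1}{4}}(\partial\Omega\times\mathbb{R})$, whereas the paper's lemma only records invertibility of $\frac{1}{2}I+\mathcal{K}$ and $\frac{1}{2}I+\mathcal{K}^{*}$; the fact you need is true for the transient heat equation but should be cited (\cite{costabel, Noonthe, hofman}) rather than tacitly assumed.
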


\subsection{Estimation of the Heat Potential's Dominating Term}
We begin by stating the integral representation formula for the exterior heat problem in (\ref{heattran}), see \cite{AM}. For $(\xi,t) \in \mathbb{R}^3\setminus\overline{\Omega} \times (0,\mathrm{T}_0)$
\begin{align}
   \mathrm{U}_{\mathrm{e}}(\xi,t) \nonumber &= -\frac{\gamma_{\mathrm{p}}}{\gamma_{\mathrm{m}}}\frac{1}{\alpha_\mathrm{m}}\int_{0}^t\int_{\partial\Omega}\Phi^{\textbf{e}}(\xi,t;\mathrm{y},\tau)\bm{\mathrm{D}^+_\nu}\mathrm{U}_{\mathrm{e}}(\mathrm{y},\tau)d\sigma_{\mathrm{y}}d\tau + \frac{1}{\alpha_\mathrm{m}}\int_{0}^t\int_{\partial\Omega}\bm{\mathrm{D}_{\nu_\mathrm{y}}}^{+}\Phi^{\textbf{e}}(\xi,t;\mathrm{y},\tau)\bm{\mathrm{D}^+_0}\mathrm{U}_{\mathrm{e}}(\mathrm{y},\tau)d\sigma_{\mathrm{y}}d\tau
   \\ \nonumber &= -\frac{\gamma_{\mathrm{p}}}{\gamma_{\mathrm{m}}}\frac{1}{\alpha_\mathrm{m}} \int_{0}^t\int_{\partial\Omega}\Phi^{\textbf{e}}(\xi,t;\mathrm{z},\tau)\bm{\mathrm{D}^-_\nu}\mathrm{U}_{\mathrm{i}}(\mathrm{y},\tau)d\sigma_\mathrm{y}d\tau 
   + \frac{1}{\alpha_\mathrm{m}}\int_{0}^t\int_{\partial\Omega}\bm{\mathrm{D}_{\nu_\mathrm{y}}}^{-}\Phi^{\textbf{e}}(\xi,t;\mathrm{y},\tau)\bm{\mathrm{D}^-_0}\mathrm{U}_{\mathrm{i}}(\mathrm{y},\tau)d\sigma_{\mathrm{y}}d\tau 
   \\ \nonumber &- \frac{\gamma_{\mathrm{p}}}{\gamma_{\mathrm{m}}}\frac{1}{\alpha_\mathrm{m}}\int_{0}^t\int_{\partial\Omega}\big[\Phi^{\textbf{e}}(\xi,t;\mathrm{z},\tau)-\Phi^{\textbf{e}}(\xi,t;\mathrm{y},\tau)\big]\bm{\mathrm{D}^-_\nu}\mathrm{U}_{\mathrm{i}}(\mathrm{y},\tau)d\sigma_{\mathrm{y}}d\tau. 
\end{align}
Then, we recall the following approximation for $\mathrm{U}_\mathrm{e}(\xi,\mathrm{t})$ obtained in \cite[Eq. 4.54]{AM}. For $(\xi,t) \in (\mathbb{R}^3\setminus \overline{\Omega})_{\mathrm{T}_0} $ and $\mathrm{z} \in \Omega$ the required formulation of the heat potential around the inserted Lorentzian nanoparticle is as follows
\begin{align}\label{mainappro}
     \mathrm{U}_{\mathrm{e}}(\xi,t) &= \frac{\gamma_{\mathrm{p}}}{\gamma_{\mathrm{m}}}\frac{1}{\alpha_\mathrm{m}}\bigg[\frac{\omega \cdot \boldsymbol{\Im}(\varepsilon_\mathrm{p})}{2\pi\gamma_{\mathrm{p}}}\int_{0}^{t} \Phi^{\textbf{e}}(\xi,t;z,\tau)d\tau\int_{\Omega}|\mathrm{E}|^{2}(\mathrm{y}) d\mathrm{y} + \textbf{err}^{(1)} + \textbf{err}^{(2)} + \textbf{err}^{(3)} + \textbf{err}^{(4)} + \textbf{err}^{(5)}\bigg],
\end{align}
where, 
\begin{align}
\begin{cases}
    \textbf{err}^{(1)} = \mathcal{O} \bigg(\delta^2\big\Vert \bm{\mathrm{D}^-_\nu}\mathrm{U}_{\mathrm{i}} \big\Vert_{\mathrm{H}^{-\frac{1}{2},-\frac{1}{4}}\big(\partial \Omega\times \mathbb{R}\big)}\big\Vert \nabla \Phi^{\textbf{e}}(\xi,t;\mathrm{z},\cdot) \big\Vert^{\frac{1}{2}}_{\mathrm{L}^2(0,\mathrm{t})}\big\Vert \nabla \Phi^{\textbf{e}}(\xi,t;\mathrm{z},\cdot) \big\Vert^{\frac{1}{2}}_{\mathrm{H}^{1,\frac{1}{2}}(0,\mathrm{t})}\bigg)
\\[5pt]
\textbf{err}^{(2)} := \mathcal{O} \bigg(\frac{\gamma_\mathrm{m}}{\gamma_\mathrm{p}}\big\Vert \bm{\mathrm{D}^-_0}\mathrm{U}_{\mathrm{i}}\big\Vert_{\mathrm{H}^{\frac{1}{2},\frac{1}{4}}\big(\partial \Omega \times \mathbb{R}\big)} \big\Vert \bm{\mathrm{D}_\nu}\Phi^{\textbf{e}}(\xi,t;\mathrm{y},\cdot)\big\Vert_{\mathrm{L}^2\big(\partial \Omega \times (0,\mathrm{t})\big)}\bigg),
\\[10pt]
    \textbf{err}^{(3)} := \mathcal{O}\bigg(\frac{\omega \cdot \boldsymbol{\Im}(\varepsilon_\mathrm{p})}{2\pi\gamma_{\mathrm{p}}}\alpha_\mathrm{p}^{\frac{1}{2}}\delta^\frac{3}{2} \big\Vert \Phi^{\textbf{e}}(\xi,t;\mathrm{z},\cdot) \big\Vert_{\mathrm{L}^2(0,\mathrm{T}_0)}\big\Vert |\mathrm{E}|^2 \big\Vert_{\mathrm{L}^2\big(\Omega\big)} \bigg)
\\[5pt]
    \displaystyle\textbf{err}^{(4)} := \mathcal{O}\bigg(\alpha_\mathrm{p}^{\frac{1}{4}} \delta^{\frac{9}{4}} \big\Vert \bm{\mathrm{D}^-_\nu}\mathrm{U}_{\mathrm{i}}\big\Vert_{\mathrm{H}^{-\frac{1}{2},-\frac{1}{4}}\big(\partial \Omega\times \mathbb{R}\big)} \big\Vert \Phi^{\textbf{e}}(\xi,t;z,\cdot)\big\Vert_{\mathrm{H}^{1,\frac{1}{2}}\big(\partial \Omega\times \mathbb{R}\big)}^\frac{1}{2}\sqrt{\int_{0}^{\mathrm{T}_0}\big\Vert \partial_\mathrm{s}\Phi^{\textbf{e}}(\xi,t;z,\cdot)\big\Vert_{\mathrm{L}^2(0,\mathrm{T}_0)}d\mathrm{t}}\bigg).
\\
    \textbf{err}^{(5)} := \mathcal{O}\bigg(\delta^2\big\Vert \bm{\mathrm{D}^-_0}\mathrm{U}_{\mathrm{i}}\big\Vert_{\mathrm{H}^{\frac{1}{2},\frac{1}{4}}\big(\partial \Omega\times \mathbb{R}\big)}\big\Vert \partial_\mathrm{t}\Phi^{\textbf{e}}(\xi,t;\mathrm{z},\cdot) \big\Vert_{\mathrm{L}^2(0,\mathrm{T}_0)}\bigg).
    \end{cases}
\end{align}
 Next, to estimate the dominant term and the error terms we modify appropriately the proofs given in \cite{AM} for the 2D case. The following lemma is the 3D version of \cite[Lemma 4.1]{AM}. We state it here and defer its proof until the appendix.
\begin{lemma}\label{hypersingularexpression}
 We set $\varphi(\mathrm{v},\mathrm{y},\mathrm{t},\tau)$ as follows:
\begin{equation}
    \varphi(\mathrm{v},\mathrm{y},\mathrm{t}, \tau) := \displaystyle\int_{0}^{\tau} \Big(\frac{\alpha_\mathrm{p}}{4\pi(\mathrm{s}-\tau)}\Big)^\frac{3}{2}\dfrac{2\pi |\mathrm{y}-\mathrm{v}| ^2}{(\mathrm{s}-\tau)}\textbf{exp}\big(-\dfrac{\alpha_\mathrm{p}|\mathrm{y}-\mathrm{v}| ^2}{4(\mathrm{s}-\tau)}\big)\Phi^{\textbf{e}}(\xi,\mathrm{t};\mathrm{z},\mathrm{s})d\mathrm{s}.
\end{equation}
Then we have
\begin{equation}
     \varphi(\mathrm{v},\mathrm{y},\mathrm{t}, \tau)-\Phi^{\textbf{e}}(\xi,\mathrm{t};\mathrm{z},\tau) = \mathcal{O}\bigg(\sqrt{\alpha_\mathrm{p}}|\mathrm{y}-\mathrm{v}| \ \Vert \partial_\mathrm{s}\Phi^{\textbf{e}}(\xi,\mathrm{t};\mathrm{z},\cdot)\Vert_{\mathrm{H}^{-\frac{1}{4}}(0,t)}\bigg),
\end{equation}
for $\mathrm{x}, \mathrm{y}$ such that $|\mathrm{v}-\mathrm{y}| \ll \mathrm{t}$ and $\mathrm{t} \in (0,\mathrm{T}]$ uniformly with respect to $\Omega$. 
\end{lemma}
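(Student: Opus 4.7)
The plan is to interpret $\varphi(v,y,t,\tau)$ as a time-mollification of $s\mapsto \Phi^{\textbf{e}}(\xi,t;z,s)$ by a Gaussian weight concentrated near $s=\tau$ when $r:=|y-v|$ is small. Writing $K_r(s-\tau)$ for the kernel appearing in the definition of $\varphi$, the change of variables $w=\alpha_p r^2/(4(s-\tau))$ converts $\int K_r$ into an incomplete Gamma integral and yields a closed-form normalization; up to exponentially small tails, which are controlled by the Gaussian factor together with the hypothesis $|y-v|\ll t$, this shows that $\varphi(v,y,t,\tau)\to \Phi^{\textbf{e}}(\xi,t;z,\tau)$ as $r\to 0$, so the error $\varphi-\Phi^{\textbf{e}}$ is entirely governed by the time-modulus of continuity of $s\mapsto \Phi^{\textbf{e}}(\xi,t;z,s)$.

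I would then subtract and add $\Phi^{\textbf{e}}(\xi,t;z,\tau)$ inside the integral to obtain
\[
\varphi - \Phi^{\textbf{e}}(\xi,t;z,\tau)=\int K_r(s-\tau)\bigl[\Phi^{\textbf{e}}(\xi,t;z,s)-\Phi^{\textbf{e}}(\xi,t;z,\tau)\bigr]\,ds + \mathcal{R},
\]
where $\mathcal{R}$ collects the (exponentially small) tails from the fact that the normalization of $K_r$ is realized on a half-line. Writing the bracket as $\int_{\tau}^{s}\partial_u\Phi^{\textbf{e}}(\xi,t;z,u)\,du$ and swapping the order of integration recasts the main term as a pairing $\int \partial_u\Phi^{\textbf{e}}(\xi,t;z,u)\,F(u)\,du$ with $F$ the tail integral of $K_r$. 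Invoking the duality $\bigl|\int fF\bigr|\le \|f\|_{H^{-1/4}(0,t)}\|F\|_{H^{1/4}(0,t)}$ reduces the lemma to a quantitative estimate on $\|F\|_{H^{1/4}(0,t)}$, which is exactly the shape of the bound asserted in the statement.

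The principal obstacle is then to prove $\|F\|_{H^{1/4}(0,t)}\lesssim \sqrt{\alpha_p}\,|y-v|$, and the natural route is a self-similarity argument. Under the rescaling $\tilde u=(u-\tau)/(\alpha_p r^2)$, $F$ collapses to a universal profile $G$ expressible via the incomplete Gamma function, and the 1-D scaling $\|F(\cdot/\lambda)\|_{\dot H^{1/4}}=\lambda^{1/4}\|F\|_{\dot H^{1/4}}$ then extracts the desired factor $\sqrt{\alpha_p}\,r$ from the rescaling parameter $\lambda=\alpha_p r^2$, modulo a constant depending only on $G$. Two technical points will require care: first, replacing the full half-line by the finite interval $(0,t)$, which is precisely where the hypothesis $|y-v|\ll t$ enters and ensures that the Gaussian tail contribution is absorbed in the stated error; and second, passing between homogeneous and inhomogeneous fractional Sobolev norms in order to be compatible with the $\|\partial_s \Phi^{\textbf{e}}\|_{H^{-1/4}(0,t)}$ on the right-hand side. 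Collecting these pieces produces the asserted bound.
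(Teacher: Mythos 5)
Your first half coincides with the paper's own proof: the same change of variables $\mathrm{m}=\frac{\sqrt{\alpha_\mathrm{p}}|\mathrm{y}-\mathrm{v}|}{2\sqrt{\tau-\mathrm{s}}}$ reduces $\varphi$ to the Gaussian average $\frac{4}{\sqrt{\pi}}\int_{\mathrm{a}}^{\infty}\mathrm{m}^2 e^{-\mathrm{m}^2}\,\Phi^{\textbf{e}}\big(\xi,\mathrm{t};\mathrm{z},\tau-\frac{\alpha_\mathrm{p}|\mathrm{y}-\mathrm{v}|^2}{4\mathrm{m}^2}\big)d\mathrm{m}$ with $\mathrm{a}=\frac{\sqrt{\alpha_\mathrm{p}}|\mathrm{y}-\mathrm{v}|}{2\sqrt{\tau}}$, and the same subtract-and-add splits the error into a shifted-difference term plus the missing-tail normalization term. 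The genuine gap is in your final step. After swapping the order of integration, the weight $F$ you pair against $\partial_u\Phi^{\textbf{e}}$ is the cumulative mass of the normalized kernel $K_r$, i.e.\ a smoothed Heaviside profile of \emph{unit height} with transition layer of width $\lambda=\alpha_\mathrm{p}r^2$ (where $r=|\mathrm{y}-\mathrm{v}|$) located at $u=\tau$. For such a profile only the $L^2$ part of the norm scales like $\lambda^{1/2}=\sqrt{\alpha_\mathrm{p}}\,r$; the seminorm scales exactly as you state, $\Vert F(\cdot/\lambda)\Vert_{\dot{\mathrm{H}}^{1/4}}=\lambda^{1/4}\Vert G\Vert_{\dot{\mathrm{H}}^{1/4}}$, but $\lambda^{1/4}=(\alpha_\mathrm{p}r^2)^{1/4}=\alpha_\mathrm{p}^{1/4}r^{1/2}$, which is \emph{not} $\sqrt{\alpha_\mathrm{p}}\,r$ and is much larger in the relevant regime $r\sim\delta\ll1$. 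Since $F$ jumps by an amount of order one across its transition layer (and, if extended by zero past $u=\tau$ so as to pair on $(0,\mathrm{t})$, acquires a further $O(1)$ endpoint jump whose $\mathrm{H}^{1/4}$ cost is independent of $r$), the bound $\Vert F\Vert_{\mathrm{H}^{1/4}}\gtrsim(\alpha_\mathrm{p}r^2)^{1/4}$ is unavoidable on your route: the key estimate $\Vert F\Vert_{\mathrm{H}^{1/4}(0,\mathrm{t})}\lesssim\sqrt{\alpha_\mathrm{p}}\,r$ that your argument rests on is false, and what you actually obtain is the strictly weaker remainder $\mathcal{O}\big(\alpha_\mathrm{p}^{1/4}|\mathrm{y}-\mathrm{v}|^{1/2}\Vert\partial_\mathrm{s}\Phi^{\textbf{e}}(\xi,\mathrm{t};\mathrm{z},\cdot)\Vert_{\mathrm{H}^{-1/4}}\big)$, which would degrade the $\delta$-bookkeeping of the error terms $\textbf{err}^{(\mathrm{j})}$ in the proof of the main theorem.

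The paper avoids exactly this loss by \emph{not} merging all the time shifts into one global profile: it keeps the Gaussian variable $\mathrm{m}$ outside, bounds each shifted difference $\Phi^{\textbf{e}}\big(\xi,\mathrm{t};\mathrm{z},\tau-\frac{\alpha_\mathrm{p}r^2}{4\mathrm{m}^2}\big)-\Phi^{\textbf{e}}(\xi,\mathrm{t};\mathrm{z},\tau)$ by duality against the constant function on the short interval of length $\ell(\mathrm{m})=\frac{\alpha_\mathrm{p}r^2}{4\mathrm{m}^2}$ — costing only the $L^2$-type factor $\ell(\mathrm{m})^{1/2}=\frac{\sqrt{\alpha_\mathrm{p}}\,r}{2\mathrm{m}}$ — and only then integrates in $\mathrm{m}$ against $\mathrm{m}^2e^{-\mathrm{m}^2}$. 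The resulting exponential contribution cancels exactly against the tail-normalization term, via $\int_{\mathrm{a}}^{\infty}\mathrm{m}e^{-\mathrm{m}^2}d\mathrm{m}=\frac{1}{2}e^{-\mathrm{a}^2}$ and $\int_0^{\mathrm{a}}\mathrm{m}^2e^{-\mathrm{m}^2}d\mathrm{m}=\frac{\sqrt{\pi}}{4}\textbf{erf}(\mathrm{a})-\frac{\mathrm{a}}{2}e^{-\mathrm{a}^2}$, leaving $\tau^{1/2}\textbf{erf}(\mathrm{a})\sim\sqrt{\alpha_\mathrm{p}}\,|\mathrm{y}-\mathrm{v}|$ by the Maclaurin expansion of the error function — which is precisely where the hypothesis $|\mathrm{y}-\mathrm{v}|\ll\mathrm{t}$ enters. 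If you wish to salvage your swapped-order formulation, you would need a per-scale decomposition of $F$ (layer-cake or Littlewood--Paley) rather than a single global $\mathrm{H}^{1/4}$ bound; carried out, that essentially reproduces the paper's pointwise-in-$\mathrm{m}$ argument.
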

\noindent
Instead of using \cite[Lemma 4.2]{AM}, we utilize its 3D version, see Lemma \ref{5.6} below, to derive the term $\textbf{err}^{(4)}$. The justification of Lemma \ref{5.6} can be found in \cite[Theorem 4.1]{graz1}.
\begin{lemma}\label{5.6}
 Let $\Gamma$ be the boundary of a bounded Lipschitz domain and $\Sigma = \partial\Omega \times (0,\mathrm{T})$. For $\varphi, \psi \in H^{\frac{1}{2},\frac{1}{4}}(\Sigma)$ there holds the integration by parts formula
\begin{align}
    \big \langle \mathbb{H}\varphi,\psi\big \rangle_{\Sigma} = \alpha^2\big \langle \textbf{curl}_{\Sigma}\psi,\mathcal{S}\big[\textbf{curl}_{\Sigma}\varphi\big]\big\rangle_{\Sigma} + \alpha \mathrm{b}(\varphi,\psi).
\end{align}
Here, the single layer boundary integral operator $\mathcal{S}$ is applied component-wise to the surface curl $\textbf{curl}_{\Sigma}\varphi:= \underline{\mathrm{n}} \times \nabla \varphi$ and the bi-linear form $b(\cdot,\cdot): H^{\frac{1}{2},\frac{1}{4}}(\Sigma) \times H^{\frac{1}{2},\frac{1}{4}}(\Sigma) \rightarrow \mathbb{R}$ is defined by 
\begin{align}
    b(\varphi,\psi) := \Big(\frac{\partial}{\partial \mathrm{t}} (\bm{\mathrm{D}^-_0})^*\Big(\mathcal{S}[\varphi\underline{\mathrm{n}}]\cdot\underline{\mathrm{n}}\Big)\Big)\big[\Tilde{\psi}\big] := -\big\langle \mathcal{S}[\varphi\underline{\mathrm{n}}], \frac{\partial}{\partial \mathrm{t}}\psi\underline{\mathrm{n}}\big\rangle_{\Sigma},
\end{align}
for $\varphi \in H^{\frac{1}{2},\frac{1}{4}}(\Sigma), \psi \in \bm{\mathrm{D}^-_0}\Big(C^{\infty}_{\mathrm{c}}(\mathbb{R}^2\times(0,\mathrm{T})\Big)$ and $\Tilde{\psi} \in C^{\infty}_{\mathrm{c}}(\mathbb{R}^2\times(0,\mathrm{T})$ such that $\psi = \Tilde{\psi}_{\Sigma}$, and as its continuous extension for general $\psi \in H^{\frac{1}{2},\frac{1}{4}}(\Sigma)$. $(\bm{\mathrm{D}^-_0})^*$ is the adjoint of the Dirichlet trace operator $\bm{\mathrm{D}^-_0}$, which needs to be understood in a distributional sense.
\end{lemma}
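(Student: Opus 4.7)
The plan is to establish the Maue-type integration-by-parts identity for the heat hypersingular operator by combining a classical vectorial identity for two directional derivatives of a smooth kernel with the PDE satisfied by the heat fundamental solution, and then transferring the distributional computation to dense smooth subspaces of $\mathrm{H}^{\frac{1}{2},\frac{1}{4}}(\Sigma)$.

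First, I would work with $\varphi,\psi \in \bm{\mathrm{D}^-_0}\big(C^{\infty}_{\mathrm{c}}(\mathbb{R}^3\times(0,\mathrm{T}))\big)$ and approach $\mathbb{H}\varphi$ through the regularised kernel $\partial_{\nu_{\mathrm{x}}}\partial_{\nu_{\mathrm{y}}}\Phi(\mathrm{x},\mathrm{t};\mathrm{y},\tau)$ (for instance via a vanishing-viscosity regularisation in the spatial singularity, or by evaluating the normal trace from the exterior and passing to the limit). Pairing with $\psi$, the key step is to decompose the two directional derivatives of $\Phi$ as
\begin{equation*}
\partial_{\nu_{\mathrm{x}}}\partial_{\nu_{\mathrm{y}}}\Phi \;=\; (\nu_{\mathrm{x}}\cdot \nu_{\mathrm{y}})\,\Delta_{\mathrm{y}}\Phi \;-\; (\nu_{\mathrm{x}}\times \nabla_{\mathrm{y}})\cdot(\nu_{\mathrm{y}}\times \nabla_{\mathrm{x}})\Phi,
\end{equation*}
which is a pointwise identity away from the diagonal. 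Integrating the second term against $\psi(\mathrm{x},\mathrm{t})\,\varphi(\mathrm{y},\tau)$ over $\Sigma\times\Sigma$ and moving the tangential derivatives onto $\psi$ and $\varphi$ via surface integration by parts yields precisely $\alpha^2\,\langle \textbf{curl}_{\Sigma}\psi, \mathcal{S}[\textbf{curl}_{\Sigma}\varphi]\rangle_{\Sigma}$ (the factor $\alpha^2$ absorbs the $1/\alpha$ normalisation built into the definition of $\mathcal{S}$ and the Jacobian coming from recognising the single layer).

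Next, for the first piece I would use the heat equation satisfied by $\Phi$ away from the source, namely $\Delta_{\mathrm{y}}\Phi(\mathrm{x},\mathrm{t};\mathrm{y},\tau)=-\alpha\,\partial_{\tau}\Phi(\mathrm{x},\mathrm{t};\mathrm{y},\tau)$ for $\mathrm{t}>\tau$, which follows because $\Phi$ depends spatially on $\mathrm{x}-\mathrm{y}$ and on $\mathrm{t}-\tau$ in time. After this substitution, the double integral takes the form
\begin{equation*}
-\alpha \int_{\Sigma}\!\!\int_{\Sigma}(\nu_{\mathrm{x}}\cdot\nu_{\mathrm{y}})\,\Phi(\mathrm{x},\mathrm{t};\mathrm{y},\tau)\,\partial_{\tau}\varphi(\mathrm{y},\tau)\,\psi(\mathrm{x},\mathrm{t})\,d\sigma d\tau d\sigma dt,
\end{equation*}
after a time-integration by parts which is legitimate since the smooth extensions have compact support in $\mathbb{R}$. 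Recognising $\int_{\Sigma}\Phi(\cdot;\mathrm{y},\tau)\nu_{\mathrm{y}}\,d\sigma_{\mathrm{y}}d\tau$ as $\mathcal{S}[\varphi\underline{\mathrm{n}}]$ (up to the $1/\alpha$ normalisation) and transferring the time derivative onto $\psi$ reproduces the bilinear form $b(\varphi,\psi)=-\langle \mathcal{S}[\varphi\underline{\mathrm{n}}],\partial_{\mathrm{t}}\psi\,\underline{\mathrm{n}}\rangle_{\Sigma}$ with the stated prefactor $\alpha$.

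Finally, density of the smooth subspace in $\mathrm{H}^{\frac{1}{2},\frac{1}{4}}(\Sigma)$ together with the mapping properties $\mathbb{H}:\mathrm{H}^{\frac{1}{2},\frac{1}{4}}(\Sigma)\to\mathrm{H}^{-\frac{1}{2},-\frac{1}{4}}(\Sigma)$, $\mathcal{S}:\mathrm{H}^{-\frac{1}{2},-\frac{1}{4}}(\Sigma)\to\mathrm{H}^{\frac{1}{2},\frac{1}{4}}(\Sigma)$, and the continuity of $\textbf{curl}_{\Sigma}:\mathrm{H}^{\frac{1}{2},\frac{1}{4}}(\Sigma)\to\mathrm{H}^{-\frac{1}{2},-\frac{1}{4}}(\Sigma;\mathrm{TM})$ extend the identity to the stated functional setting. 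The main obstacle I anticipate is the careful justification of the pointwise vectorial identity under integration against the hypersingular kernel: the manipulation $\partial_{\nu_{\mathrm{x}}}\partial_{\nu_{\mathrm{y}}}\Phi = (\nu_{\mathrm{x}}\cdot\nu_{\mathrm{y}})\Delta_{\mathrm{y}}\Phi-(\nu_{\mathrm{x}}\times\nabla_{\mathrm{y}})\cdot(\nu_{\mathrm{y}}\times\nabla_{\mathrm{x}})\Phi$ must be interpreted in the sense of finite-part integrals or as a regularisation limit, because the naive exchange of the second derivative with the surface integration is not admissible due to the order-$|\mathrm{x}-\mathrm{y}|^{-3}$ spatial singularity. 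Handling this via a limiting argument (shifting $\mathrm{x}$ off $\Sigma$ and taking non-tangential limits, combined with uniform bounds in $\mathrm{H}^{-\frac{1}{2},-\frac{1}{4}}$) is the technical heart of the proof and mirrors the strategy in Theorem 4.1 of \cite{graz1}.
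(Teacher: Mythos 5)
The paper offers no proof of this lemma at all---it defers entirely to Theorem 4.1 of \cite{graz1}---and your sketch reconstructs precisely that reference's strategy: the Maue-type splitting of $\partial_{\nu_{\mathrm{x}}}\partial_{\nu_{\mathrm{y}}}\Phi$ into a tangential (surface-curl) part and a $(\nu_{\mathrm{x}}\cdot\nu_{\mathrm{y}})\Delta_{\mathrm{y}}\Phi$ part, substitution of the heat equation $\Delta_{\mathrm{y}}\Phi=-\alpha\,\partial_{\tau}\Phi$ to produce the time-derivative bilinear form $b$, a regularisation limit off the boundary to handle the finite-part kernel, and density of smooth traces to extend to $\mathrm{H}^{\frac{1}{2},\frac{1}{4}}(\Sigma)$, so it is essentially the same approach as the paper's cited proof. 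The only blemish is a sign slip in your displayed kernel identity: since $\Phi$ depends on $\mathrm{x}-\mathrm{y}$, the correct decomposition is $\partial_{\nu_{\mathrm{x}}}\partial_{\nu_{\mathrm{y}}}\Phi=-(\nu_{\mathrm{x}}\cdot\nu_{\mathrm{y}})\Delta_{\mathrm{y}}\Phi-(\nu_{\mathrm{x}}\times\nabla_{\mathrm{x}})\cdot(\nu_{\mathrm{y}}\times\nabla_{\mathrm{y}})\Phi$ (normals paired with gradients at the same point, as needed to form $\textbf{curl}_{\Sigma}$), so the relative sign of your two terms must be fixed---it is then absorbed into the sign convention for $\mathbb{H}$ and does not affect the structure of the argument.
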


\noindent
Using classical singularities listed in (\ref{singularities}), we first note that
\begin{align}\label{er5}
    \big\Vert \partial_\mathrm{t}\Phi^{\textbf{e}}(\xi,t;\mathrm{z},\cdot) \big\Vert_{\mathrm{L}^2(0,\mathrm{T}_0)}^2 \nonumber &=\int_0^{\mathrm{T}_0} |\partial_\mathrm{t}\Phi^{\textbf{e}}(\xi,t;z,\tau)|^2d\tau 
    \\  &\lesssim \frac{\alpha_\mathrm{m}^{2\mathrm{r}}}{|\xi-z|^{10-4\mathrm{r}}} \int_{0}^{\mathrm{T_0}}\frac{1}{(t-\tau)^{2\mathrm{r}}}d\tau.
\end{align}
Let us now denote $\mathrm{K}^{(\mathrm{T_0})}_{\mathrm{r}}:= \sup_{\mathrm{t}\in(0,\mathrm{T}_0)}\displaystyle \int_{0}^{\mathrm{T_0}}\frac{1}{(t-\tau)^{2\mathrm{r}}}d\mathrm{t},\; \text{which only make sense for} \; \mathrm{r}<\frac{1}{2}.$ Consequently, we obtain from (\ref{er5}) and using Proposition \ref{heatapriori} that
\begin{align}\label{4.22}
    \textbf{err}^{(5)} \sim \frac{\omega \cdot \boldsymbol{\Im}(\varepsilon_\mathrm{p})}{2\pi\gamma_{\mathrm{p}}}\alpha_\mathrm{m}^\mathrm{r}\alpha_{\mathrm{p}}^{\frac{1}{4}} \delta^4 \big\Vert|\mathrm{E}|^{2}\big\Vert_{\mathrm{L}^2\big(\Omega \big)} \frac{1}{|\xi-z|^{5-2\mathrm{r}}}\sqrt{\mathrm{K}^{(\mathrm{T_0})}_{\mathrm{r}}}.
\end{align}
Again, based on the estimates (\ref{singularities}), we deduce that $\big\Vert \Phi^{\textbf{e}}(\xi,t;z,\cdot)\big\Vert_{\mathrm{H}^{1,\frac{1}{2}}\big(\partial \Omega\times \mathbb{R}\big)}^\frac{1}{2} \lesssim \frac{\alpha_\mathrm{m}^\frac{\mathrm{r}}{2}}{|\xi-z|^{2-\mathrm{r}}}\sqrt[4]{\mathrm{K}^{(\mathrm{T_0})}_{\mathrm{r}}}$ and 
\begin{align}
\nonumber
    \displaystyle\sqrt{\int_{0}^{\mathrm{T}_0}\big\Vert \partial_\mathrm{s}\Phi^{\textbf{e}}(\xi,t;z,\cdot)\big\Vert_{\mathrm{L}^2(0,\mathrm{T}_0)}d\mathrm{t}} \lesssim \frac{\alpha_\mathrm{m}^\frac{\mathrm{r}}{2}}{|\xi-z|^{\frac{5}{2}-\mathrm{r}}}\sqrt[4]{\mathrm{S}^{(\mathrm{T_0})}_{\mathrm{r}}}, \; \text{where}\; \mathrm{S}^{(\mathrm{T_0})}_{\mathrm{r}}:= \sup_{\mathrm{t}\in(0,\mathrm{T}_0)}\displaystyle \int_{0}^{\mathrm{T_0}}\int_{0}^{\mathrm{T_0}}\frac{1}{(t-\tau)^{2\mathrm{r}}}d\mathrm{t}d\tau,\; \\
    \text{which only makes sense for} \; \mathrm{r}<1.
\end{align}
Therefore, we obtain that 
\begin{align}\label{4.24}
    \textbf{err}^{(4)} \sim \frac{\omega \cdot \boldsymbol{\Im}(\varepsilon_\mathrm{p})}{2\pi\gamma_{\mathrm{p}}}\alpha_\mathrm{m}^\mathrm{r}\alpha_{\mathrm{p}}^{\frac{1}{4}} \delta^\frac{9}{4} \big\Vert|\mathrm{E}|^{2}\big\Vert_{\mathrm{L}^2\big(\Omega \big)} \frac{1}{|\xi-z|^{\frac{9}{2}-2\mathrm{r}}}\sqrt[4]{\mathrm{K}^{(\mathrm{T_0})}_{\mathrm{r}}\mathrm{S}^{(\mathrm{T_0})}_{\mathrm{r}}}.
\end{align}
In a similar way, we can show that
\begin{align}\label{4.25}
        \textbf{err}^{(3)} \sim \frac{\omega \cdot \boldsymbol{\Im}(\varepsilon_\mathrm{p})}{2\pi\gamma_{\mathrm{p}}}\alpha_\mathrm{m}^\mathrm{r}\alpha_{\mathrm{p}}^{\frac{1}{2}} \delta^\frac{3}{2} \big\Vert|\mathrm{E}|^{2}\big\Vert_{\mathrm{L}^2\big(\Omega \big)} \frac{1}{|\xi-z|^{3-2\mathrm{r}}}\sqrt{\mathrm{K}^{(\mathrm{T_0})}_{\mathrm{r}}},
\end{align}
\begin{align}\label{4.26}
        \textbf{err}^{(2)} \sim \frac{\omega \cdot \boldsymbol{\Im}(\varepsilon_\mathrm{p})}{2\pi\gamma_{\mathrm{p}}}\frac{\gamma_\mathrm{m}}{\gamma_\mathrm{p}}\alpha_\mathrm{m}^\mathrm{r}\alpha_{\mathrm{p}}^{\frac{1}{4}} \delta^3 \big\Vert|\mathrm{E}|^{2}\big\Vert_{\mathrm{L}^2\big(\Omega \big)} \frac{1}{|\xi-\cdot|^{3-2\mathrm{r}}}\sqrt{\mathrm{K}^{(\mathrm{T_0})}_{\mathrm{r}}},
\end{align}
and
\begin{align}\label{4.27}
        \textbf{err}^{(1)} \sim \frac{\omega \cdot \boldsymbol{\Im}(\varepsilon_\mathrm{p})}{2\pi\gamma_{\mathrm{p}}}\alpha_\mathrm{m}^\mathrm{r}\alpha_{\mathrm{p}}^{\frac{1}{4}} \delta^3 \big\Vert|\mathrm{E}|^{2}\big\Vert_{\mathrm{L}^2\big(\Omega \big)} \frac{1}{|\xi-\mathrm{z}|^{4-2\mathrm{r}}}\sqrt{\mathrm{K}^{(\mathrm{T_0})}_{\mathrm{r}}}. 
\end{align}
Now, we recall that we are in regimes where
\begin{equation}\label{condition}
    \gamma_p = \overline{\gamma}_p\; \delta^{-2}, ~~ \rho_{\mathrm{p}}\mathrm{c}_{\mathrm{p}} \sim 1, ~~\mbox{ and }~~ \alpha_\mathrm{m} \sim 1, ~~ \delta \ll 1.
\end{equation}
Therefore, assuming (\ref{condition}), we insert (\ref{4.22}), (\ref{4.24}), (\ref{4.25}), (\ref{4.26}), and (\ref{4.27}) in (\ref{mainappro}) to obtain the desired approximation for $\mathrm{U}_\mathrm{e}(\xi,\mathrm{t})$ with $\xi \in \mathbb{R}^3\setminus\overline{\Omega}$ and $\mathrm{z}\in \Omega$ such that $\text{dist}(\xi, \Omega)\sim \delta^p$ and therefore $\vert \xi- z\vert\sim \delta^p+\delta)$, where $\mathrm{p}\in [0,1)$
\begin{align}\label{finalappro}
     \mathrm{U}_{\mathrm{e}}(\xi,t) &= \frac{\omega \cdot \boldsymbol{\Im}(\varepsilon_\mathrm{p})}{2\pi\alpha_\mathrm{m}}\int_{0}^{t} \Phi^{\textbf{e}}(\xi,t;z,\tau)d\tau\int_{\Omega}|\mathrm{E}|^{2}(\mathrm{y}) d\mathrm{y}  + \mathcal{O}\Big(\boldsymbol{\Im}(\varepsilon_\mathrm{p})\delta^{\frac{5}{2}-\mathrm{p}(3-2\mathrm{r})} \big\Vert|\mathrm{E}|^{2}\big\Vert_{\mathrm{L}^2\big(\Omega \big)} \sqrt{\mathrm{K}^{(\mathrm{T_0})}_{\mathrm{r}}}\Big) .
\end{align}

\begin{proposition}\label{e12} 
We have the following a priori estimations of the electric fields. 
\begin{align}
\begin{cases}
        \big\Vert|\mathrm{E}|^{2}\big\Vert_{\mathrm{L}^2(\Omega)} = \mathcal{O}\big(\delta^{\frac{3}{2}-2\mathrm{h}}\big)\; 
 \text{for the plasmonic case},\\[10pt]
    \big\Vert|\mathrm{E}|^{2}\big\Vert_{\mathrm{L}^2(\Omega)} = \mathcal{O}\big(\delta^{\frac{5}{2}-\mathrm{h}}\big), \; \text{for the dielectric case},
\end{cases}
\end{align}
with $\mathrm{h}<2$.
\end{proposition}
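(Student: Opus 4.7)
The plan is to bound $\||\mathrm{E}|^2\|_{\mathrm{L}^2(\Omega)}$ by decoupling it via the elementary multiplicative inequality $\||\mathrm{E}|^2\|_{\mathrm{L}^2(\Omega)} \leq \|\mathrm{E}\|_{\mathrm{L}^\infty(\Omega)}\,\|\mathrm{E}\|_{\mathrm{L}^2(\Omega)}$, which follows pointwise from $|\mathrm{E}(\mathrm{y})|^4 \leq \|\mathrm{E}\|_{\mathrm{L}^\infty(\Omega)}^2\,|\mathrm{E}(\mathrm{y})|^2$. After rescaling to the reference ball $\mathrm{B}$ via $\Tilde{\mathrm{E}}(\tilde{\mathrm{x}}) = \mathrm{E}(\mathrm{z} + \delta \tilde{\mathrm{x}})$ one has $\|\mathrm{E}\|_{\mathrm{L}^2(\Omega)} = \delta^{3/2}\|\Tilde{\mathrm{E}}\|_{\mathrm{L}^2(\mathrm{B})}$ and $\|\mathrm{E}\|_{\mathrm{L}^\infty(\Omega)} = \|\Tilde{\mathrm{E}}\|_{\mathrm{L}^\infty(\mathrm{B})}$, so the proof reduces to controlling $\Tilde{\mathrm{E}}$ on $\mathrm{B}$ in both norms.

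The $\mathrm{L}^2$ factor is already in hand from Sections \ref{plasmonicnanoth}--\ref{dielectricnanoth}: the a priori estimate (\ref{aprori3D}) gives $\|\Tilde{\mathrm{E}}\|_{\mathrm{L}^2(\mathrm{B})}\lesssim \delta^{-\mathrm{h}}$ in the plasmonic case, while (\ref{aprdie}) yields $\|\Tilde{\mathrm{E}}\|_{\mathrm{L}^2(\mathrm{B})}^2\lesssim\delta^{2-2\mathrm{h}}$ in the dielectric case. Scaling back then gives $\|\mathrm{E}\|_{\mathrm{L}^2(\Omega)}\lesssim \delta^{3/2-\mathrm{h}}$ and $\|\mathrm{E}\|_{\mathrm{L}^2(\Omega)}\lesssim \delta^{5/2-\mathrm{h}}$ respectively.

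For the $\mathrm{L}^\infty$ factor I would return to the scaled Lippmann--Schwinger equation (\ref{Lippmann-scaled-operator}) and isolate the dominant resonant contribution along $\tilde{\mathrm{e}}_{\mathrm{n}_0}^{(3)}$ for the plasmonic case, and along $\tilde{\mathrm{e}}_{\mathrm{n}_0}^{(1)}$ for the dielectric case, exactly as in the eigenmode analysis of Sections \ref{plasmonicnanoth}--\ref{dielectricnanoth}. Since these eigenfunctions are smooth and hence $\mathrm{L}^\infty$-bounded on the fixed reference ball $\mathrm{B}$, the $\mathrm{L}^\infty$ amplitude of $\Tilde{\mathrm{E}}$ inherits the scale of the resonant coefficient. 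In the plasmonic case this produces $\|\Tilde{\mathrm{E}}\|_{\mathrm{L}^\infty(\mathrm{B})}\lesssim \delta^{-\mathrm{h}}$, and multiplication with the $\mathrm{L}^2$ factor gives $\||\mathrm{E}|^2\|_{\mathrm{L}^2(\Omega)}\lesssim \delta^{-\mathrm{h}}\cdot \delta^{3/2-\mathrm{h}} = \delta^{3/2-2\mathrm{h}}$. In the dielectric case the source of the resonant mode scales as $\delta$ through identity (\ref{curl}), so the resonant amplitude is controlled by the unit-scale incident field $\Tilde{\mathrm{E}}^{\textbf{in}}$, yielding $\|\Tilde{\mathrm{E}}\|_{\mathrm{L}^\infty(\mathrm{B})}\lesssim 1$ in the admissible regime and hence $\||\mathrm{E}|^2\|_{\mathrm{L}^2(\Omega)}\lesssim \delta^{5/2-\mathrm{h}}$.

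The main technical obstacle is precisely the uniform $\mathrm{L}^\infty$ control of the non-resonant part of $\Tilde{\mathrm{E}}$, since the magnetization operator $\mathbb{M}^{(\mathrm{k}\delta)}_\mathrm{B}$ is a principal-value singular integral not bounded on $\mathrm{L}^\infty$. I would circumvent this by exploiting the smoothing of the Newtonian operator $\mathbb{N}^{(\mathrm{k}\delta)}_\mathrm{B}\colon \mathrm{L}^2(\mathrm{B})\to\mathbb{H}^2(\mathrm{B})\hookrightarrow\mathrm{L}^\infty(\mathrm{B})$ in three dimensions, combined with the orthogonal splitting (\ref{decomposition}): on each of $\mathbb{H}_0(\textbf{div},0)$, $\mathbb{H}_0(\textbf{curl},0)$, and $\nabla \mathbb{H}_{\textbf{arm}}$ the resonant direction is explicit, and extracting it reduces the non-resonant remainder to a $\mathrm{L}^2$-based Newtonian-type estimate that then embeds into $\mathrm{L}^\infty$.
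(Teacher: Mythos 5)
There is a genuine gap, and it sits exactly where you located the difficulty: the $\mathrm{L}^\infty$ bound. Your $\mathrm{L}^2$ factor is correct and matches the paper's a priori estimates (\ref{aprori3D}) and (\ref{aprdie}), but the sup-norm control is never actually established, and the mechanism you sketch cannot establish it. Extracting the resonant mode $\tilde{\mathrm{e}}^{(3)}_{\mathrm{n}_0}$ (or $\tilde{\mathrm{e}}^{(1)}_{\mathrm{n}_0}$) leaves a non-resonant remainder whose representation is a full eigen-expansion $\sum_{\mathrm{n}\ne\mathrm{n}_0}(1+\varsigma\lambda^{(3)}_\mathrm{n})^{-1}\langle\cdot\,;\tilde{\mathrm{e}}^{(3)}_\mathrm{n}\rangle\,\tilde{\mathrm{e}}^{(3)}_\mathrm{n}$; the uniform $\mathrm{L}^2$ resolvent bound does not transfer to $\mathrm{L}^\infty$ because $\sup_\mathrm{n}\Vert \mathrm{e}^{(3)}_\mathrm{n}\Vert_{\mathrm{L}^\infty(\mathrm{B})}$ is not uniformly controlled, and the singular, non-smoothing part of the equation is the term $\varsigma\,\mathbb{M}^{(0)}_\mathrm{B}[\Tilde{\mathrm{E}}]$ with $\varsigma\sim 1$ (plasmonic case), not the term you propose to smooth: the Newtonian operator only carries the harmless prefactor $\omega^2\mu_\mathrm{m}\varsigma\delta^2$. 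Moreover, in the dielectric case your claim $\Vert\Tilde{\mathrm{E}}\Vert_{\mathrm{L}^\infty(\mathrm{B})}\lesssim 1$ is impossible on the stated range $\mathrm{h}<2$: since $\mathrm{B}$ has unit size, $\Vert\Tilde{\mathrm{E}}\Vert_{\mathrm{L}^2(\mathrm{B})}\lesssim\Vert\Tilde{\mathrm{E}}\Vert_{\mathrm{L}^\infty(\mathrm{B})}$, and (\ref{aprdie}) gives $\Vert\Tilde{\mathrm{E}}\Vert_{\mathrm{L}^2(\mathrm{B})}\sim\delta^{1-\mathrm{h}}\to\infty$ for $\mathrm{h}\in(1,2)$. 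Even granting the best conceivable bound $\Vert\Tilde{\mathrm{E}}\Vert_{\mathrm{L}^\infty(\mathrm{B})}\sim\delta^{1-\mathrm{h}}$, the product $\Vert\mathrm{E}\Vert_{\mathrm{L}^\infty}\Vert\mathrm{E}\Vert_{\mathrm{L}^2}$ yields only $\delta^{7/2-2\mathrm{h}}$, which is strictly weaker than the asserted $\delta^{5/2-\mathrm{h}}$ precisely when $\mathrm{h}>1$; so the H\"older split $\Vert|\mathrm{E}|^2\Vert_{\mathrm{L}^2}\le\Vert\mathrm{E}\Vert_{\mathrm{L}^\infty}\Vert\mathrm{E}\Vert_{\mathrm{L}^2}$ is structurally too lossy for this proposition, independently of how the $\mathrm{L}^\infty$ estimate is proved.

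The paper avoids $\mathrm{L}^\infty$ entirely and works at the $\mathrm{L}^4$ level via $\Vert|\mathrm{E}|^2\Vert_{\mathrm{L}^2(\Omega)}=\Vert\mathrm{E}\Vert^2_{\mathrm{L}^4(\Omega)}$. Using $\nabla\cdot\mathrm{E}=0$ it rewrites $\mathbb{M}^{(\mathrm{k})}[\mathrm{E}]=\nabla\mathcal{S}^{(\mathrm{k})}[\nu\cdot\mathrm{E}]$, takes the Dirichlet trace and the jump relation of the single layer to get a boundary equation for $\nu\cdot\Tilde{\mathrm{E}}$, and bounds $\Vert\nu\cdot\Tilde{\mathrm{E}}\Vert_{\mathbb{H}^{1/2}(\partial\mathrm{B})}$ through the continuity of $\mathcal{K}^*_\mathrm{B}$; combined with $\Vert\textbf{curl}\,\Tilde{\mathrm{E}}\Vert_{\mathrm{L}^2(\mathrm{B})}\sim 1$, $\textbf{div}\,\Tilde{\mathrm{E}}=0$, and a Calder\'on--Zygmund div--curl--trace inequality, this gives $\Vert\Tilde{\mathrm{E}}\Vert_{\mathbb{H}^1(\mathrm{B})}\sim\delta^{-\mathrm{h}}$ (plasmonic) and $\sim 1$ (dielectric). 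Gagliardo--Nirenberg, $\Vert\Tilde{\mathrm{E}}\Vert_{\mathrm{L}^4(\mathrm{B})}\lesssim\Vert\Tilde{\mathrm{E}}\Vert^{1/2}_{\mathrm{L}^2(\mathrm{B})}\Vert\Tilde{\mathrm{E}}\Vert^{1/2}_{\mathbb{H}^1(\mathrm{B})}$, together with the scaling $\Vert\mathrm{E}\Vert_{\mathrm{L}^4(\Omega)}=\delta^{3/4}\Vert\Tilde{\mathrm{E}}\Vert_{\mathrm{L}^4(\mathrm{B})}$, then delivers both exponents. If you want to rescue your plan, replace the $\mathrm{L}^\infty$ target by exactly this $\mathbb{H}^1(\mathrm{B})$ estimate: it requires only no loss (rather than a gain) of integrability at the resonant scale, and it is reachable by boundary-integral and Calder\'on--Zygmund arguments, whereas a sharp sup-norm bound for the resonant field is both stronger than needed and, as shown above, still insufficient in the dielectric regime $\mathrm{h}\in(1,2)$.
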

\begin{proof}
See Section \ref{e23D} for the proof.
\end{proof}

\noindent
As a result, using this a priori knowledge, we infer the followings from (\ref{finalappro}) considering the two scenarios:
\begin{enumerate}
    \item Plasmonic case.
    \begin{align}\label{finalappro1}
     \mathrm{U}_{\mathrm{e}}(\xi,t) &= \frac{\omega \cdot \boldsymbol{\Im}(\varepsilon_\mathrm{p})}{2\pi\alpha_\mathrm{m}}\int_{0}^{t} \Phi^{\textbf{e}}(\xi,t;z,\tau)d\tau\int_{\Omega}|\mathrm{E}|^{2}(\mathrm{y}) d\mathrm{y}  + \mathcal{O}\Big(\boldsymbol{\Im}(\varepsilon_\mathrm{p})\delta^{4-2\mathrm{h}-\mathrm{p}(3-2\mathrm{r})} \sqrt{\mathrm{K}^{(\mathrm{T_0})}_{\mathrm{r}}}\Big).
\end{align}
     \item Dielectric case.
     \begin{align}\label{finalappro2}
     \mathrm{U}_{\mathrm{e}}(\xi,t) &= \frac{\omega \cdot \boldsymbol{\Im}(\varepsilon_\mathrm{p})}{2\pi\alpha_\mathrm{m}}\int_{0}^{t} \Phi^{\textbf{e}}(\xi,t;z,\tau)d\tau\int_{\Omega}|\mathrm{E}|^{2}(\mathrm{y}) d\mathrm{y}  + \mathcal{O}\Big(\boldsymbol{\Im}(\varepsilon_\mathrm{p})\delta^{5-\mathrm{h}-\mathrm{p}(3-2\mathrm{r})} \sqrt{\mathrm{K}^{(\mathrm{T_0})}_{\mathrm{r}}}\Big).
     \end{align}
\end{enumerate}
\noindent
Next, we recall the fundamental solution of the heat operator $\alpha_\mathrm{m} \frac{\partial}{\partial \mathrm{t}}-\Delta$ for the three dimensional spatial space
\begin{equation}\label{funda3D}
    \Phi^\mathrm{e}(\mathrm{x},t;\mathrm{y},\tau):=\  \begin{cases}
    \Big(\frac{\alpha_\mathrm{m}}{4\pi(t-\tau)}\Big)^\frac{3}{2}\textbf{exp}\big(-\frac{\alpha_\mathrm{m} |\mathrm{x}-\mathrm{y}|^2}{4(t-\tau)}\big), \ \ \ t > \tau \\
    0 ,\quad \text{otherwise}
    \end{cases}
\end{equation}
Let us consider the following integral
\begin{align}
    \mathrm{J}\nonumber&:= \int_{0}^\mathrm{t} \Phi^\mathrm{e}(\mathrm{\xi},t;\mathrm{z},\tau) d\tau
    \\ \nonumber &= \int_{0}^\mathrm{t} \Big(\frac{\alpha_\mathrm{m}}{4\pi(t-\tau)}\Big)^\frac{3}{2}\textbf{exp}\big(-\frac{\alpha_\mathrm{m} |\mathrm{x}-\mathrm{y}|^2}{4(t-\tau)}\big) d\tau
\end{align}
If we apply the change of variable $\mathrm{m} :=\frac{1}{2\sqrt{\mathrm{t}-\tau}}$, then it is evident that 
$
     d\tau = \mathrm{t} - \frac{1}{4\mathrm{m}^2}$ and then $d\tau = \frac{1}{2}\mathrm{m}^{-3}d\mathrm{m}.
$ 
We also have $\mathrm{m}^3 = \big(\frac{1}{4(t-\tau)}\big)^\frac{3}{2}$ . Consequently,
\begin{align}
    \nonumber&= \int_{\frac{1}{2\sqrt{t}}}^\infty e^{-\alpha_\mathrm{m} |\xi-\mathrm{z}|\mathrm{m}^2}\times \frac{1}{2}\mathrm{m}^{-3}\times {\big(\frac{\alpha_\mathrm{m}}{\pi}\big)}^\frac{3}{2}\mathrm{m}^{3}d\mathrm{m}
    \\ \nonumber &= \frac{1}{2}{\big(\frac{\alpha_\mathrm{m}}{\pi}\big)}^\frac{3}{2} \int_{\mathrm{a}}^\infty e^{-\mathrm{b}\;\mathrm{m}^2} d\mathrm{m},
\end{align}
where we define by $\mathrm{b}:= \alpha_\mathrm{m} |\xi-\mathrm{z}|^2\; \text{and}\; \mathrm{a}:= \frac{1}{2\sqrt{t}}.$ Now, we can derive that 
\begin{align}
    \int_{\mathrm{a}}^\infty e^{-\mathrm{b}\;\mathrm{m}^2} d\mathrm{m} = \frac{\sqrt{\pi}}{2\sqrt{b}}- \frac{\sqrt{\pi}}{2\sqrt{b}} \textbf{erf}(\sqrt{\mathrm{b}}\mathrm{a}),
\end{align}
where, for $|\xi-\mathrm{z}|\ll t$, we have error function's Maclaurin series as: $\textbf{erf}(\sqrt{\mathrm{b}}\mathrm{a}) = \frac{2}{\sqrt{\pi}}\displaystyle\sum_{\mathrm{j=0}}^\infty \frac{(-1)^\mathrm{j}(\sqrt{\mathrm{b}}\mathrm{a})^{2\mathrm{j}+1} }{\mathrm{j}!(2\mathrm{j}+1)}.$
Therefore, we obtain
\begin{align}\label{calculation}
    \mathrm{J}\nonumber&:= \int_{0}^\mathrm{t} \Phi^\mathrm{e}(\mathrm{\xi},t;\mathrm{z},\tau) d\tau
    \\ \nonumber &= \frac{1}{2}{\big(\frac{\alpha_\mathrm{m}}{\pi}\big)}^\frac{3}{2}\Bigg[\frac{\sqrt{\pi}}{2\sqrt{b}}- \frac{\sqrt{\pi}}{2\sqrt{b}} \textbf{erf}(\sqrt{\mathrm{b}}\mathrm{a})\Bigg]
    \\ \nonumber &= \frac{\alpha_\mathrm{m}}{4\pi|\xi-\mathrm{z}|}-  \frac{\alpha_\mathrm{m}}{4\pi|\xi-\mathrm{z}|}\times \frac{2}{\sqrt{\pi}}\displaystyle\sum_{\mathrm{j=0}}^\infty \frac{(-1)^\mathrm{j}(\sqrt{\mathrm{b}}\mathrm{a})^{2\mathrm{j}+1} }{\mathrm{j}!(2\mathrm{j}+1)}
    \\ &= \frac{\alpha_\mathrm{m}}{4\pi|\xi-\mathrm{z}|} + \mathcal{O}{(1)}.
\end{align}
We consider the next two scenarios in order to justify the final asymptotic expansion described in Theorem \ref{mainth}:
\begin{enumerate}
    \item Plasmonic Case. For the plasmonic nanoparticle, we have $\boldsymbol{\Im}(\varepsilon_\mathrm{p}) \sim \delta^\mathrm{h}$, $\mathrm{h}<2$. Then, considering the approximation formula for $\displaystyle\int_\Omega |\mathrm{E}|^2(\mathrm{y})d\mathrm{y}$, derived in Theorem \ref{th13D}(1) and using (\ref{calculation}) with the condition $\mathrm{r}<\frac{1}{2}$ and $2\mathrm{p}(1-\mathrm{r})<1 $ we obtain the following from (\ref{finalappro1}):
\begin{align*}
     \mathrm{U}_{\mathrm{e}}(\xi,\mathrm{t}) &= \frac{\omega \cdot \boldsymbol{\Im}(\varepsilon_\mathrm{p})}{8\pi^2\gamma_{\mathrm{m}}|\xi-\mathrm{z}|}\delta^{3-2\mathrm{h}}\Big|\mathrm{E}^{\textbf{in}}(\mathrm{z})\cdot\int_\mathrm{\mathrm{B}}\Tilde{\mathrm{e}}^{(3)}_{\mathrm{n}_{0}}(\mathrm{x})d\mathrm{x}\Big|^2 + 
     \begin{cases}
       \mathcal{O}(\delta^{4-\mathrm{h}-\mathrm{p}}) +  \mathcal{O}\Big(\delta^{4-\mathrm{h}-\mathrm{p}(3-2\mathrm{r})}\sqrt{\mathcal{K}^{(\mathrm{T_0})}_{\mathrm{r}}}\Big). \\[10pt] 
       \mathcal{O}(\delta^{7-3\mathrm{h}-\mathrm{p}}) +  \mathcal{O}\Big(\delta^{4-\mathrm{h}-\mathrm{p}(3-2\mathrm{r})}\sqrt{\mathcal{K}^{(\mathrm{T_0})}_{\mathrm{r}}}\Big).
     \end{cases}
\end{align*}
     \item Dielectric Case. We assume that $\boldsymbol{\Im}(\varepsilon_\mathrm{p}) \sim \delta^{\mathrm{h}-2},\; \delta\ll 1$ and $\mathrm{h}<2$. Considering the derived approximation formula for $\displaystyle\int_\Omega |\mathrm{E}|^2(\mathrm{y})d\mathrm{y}$ in Theorem \ref{th13D}(2), estimate in (\ref{calculation}), with the condition $\mathrm{r}<\frac{1}{2}$ and $2\mathrm{p}(1-\mathrm{r})<\mathrm{h}$, we obtain from (\ref{finalappro2}) the following:
\begin{align*}
       \mathrm{U}_{\mathrm{e}}(\xi,\mathrm{t}) &= \frac{\omega^3\mu^2_\mathrm{m} \cdot \boldsymbol{\Im}(\varepsilon_\mathrm{p})}{8\pi^2\gamma_{\mathrm{m}}|\xi-\mathrm{z}|}\delta^{5-2\mathrm{h}}\Big|\mathrm{H}^{\textbf{in}}(\mathrm{z})\cdot\int_\mathrm{\mathrm{B}}\Tilde{\varphi}_{\mathrm{n}_{0}}(\mathrm{x})d\mathrm{x}\Big|^2 + 
       \begin{cases}
         \mathcal{O}(\delta^{3+\mathrm{h}-\mathrm{p}})+ \mathcal{O}\Big(\delta^{3-\mathrm{p}(3-2\mathrm{r})}\sqrt{\mathcal{K}^{(\mathrm{T_0})}_{\mathrm{r}}}\Big). \\[10pt]
         \mathcal{O}(\delta^{7-3\mathrm{h}-\mathrm{p}})+ \mathcal{O}\Big(\delta^{3-\mathrm{p}(3-2\mathrm{r})}\sqrt{\mathcal{K}^{(\mathrm{T_0})}_{\mathrm{r}}}\Big).
       \end{cases}
\end{align*}
\end{enumerate}
Therefore it completes the proof of Theorem \ref{mainth}.


\section{Proof of Proposition \ref{e12}}\label{e23D}
In this section, we provide an a priori estimate for $\big\Vert |\mathrm{{E}}|^2\big\Vert_{\mathbb{L}^2(\mathrm{\Omega})}$ when we consider the plasmonic nanoparticle or the dielectric nanoparticle as the source of heat. This requires that we should have the $\mathbb{L}^2(\mathbb{R}^3)$-regularity of the given source term $\frac{\omega}{2\pi\gamma_{\mathrm{p}}}\boldsymbol{\Im}(\varepsilon_\mathrm{p})|\mathrm{E}|^{2}.$ As $\Im(\varepsilon)= 0\; \text{in}\; \mathbb{R}^3\setminus\overline{\Omega}$, we only require the $\mathbb{L}^4(\mathrm{\Omega})$-regularity of the electric field $\mathrm{E}.$ We refer to \cite[Section 3.4]{mourad} for more information regarding this needed regularity condition. We then consider the two cases related to the plasmonic and dielectric nanoparticles respectively.

\subsection{Plasmonic Case}
To begin, let us recall the following Lippmann-Schwinger equation with the contrast parameter $\varsigma := \varepsilon_\mathrm{p}(\omega)-\varepsilon_\mathrm{m}$
\begin{align*}
    \mathrm{E}(\mathrm{x}) + \varsigma \; \mathbb{M}^{(\mathrm{k})}\big[\mathrm{E}\big](\mathrm{x}) - \omega^2\mu_\mathrm{m}\varsigma \; \mathbb{N}^{(\mathrm{k})}\big[\mathrm{E}\big](\mathrm{x}) = \mathrm{E}^\textbf{in}(\mathrm{x}).
\end{align*}    
Now, with integration by parts and as $\nabla\cdot \mathrm{E}=0$, we show that $\mathbb{M}^{(\mathrm{k})}\big[\mathrm{E}\big] = \nabla \mathcal{S}^{(\mathrm{k})}\big[\nu\cdot\mathrm{E}\big]$. Consequently, we obtain that
\begin{align*}
    \mathrm{E}(\mathrm{x}) + \varsigma \; \nabla \mathcal{S}^{(\mathrm{k})}\big[\nu\cdot\mathrm{E}\big](\mathrm{x}) - \omega^2\mu_\mathrm{m}\varsigma \; \mathbb{N}^{(\mathrm{k})}\big[\mathrm{E}\big](\mathrm{x}) = \mathrm{E}^\textbf{in}(\mathrm{x}).
\end{align*}   
By scaling the prior equation to the domain $\mathrm{B}$, we arrive at the following expression:
 \begin{align} \label{2.12}
    \Tilde{\mathrm{E}} + \varsigma \; \nabla \mathcal{S}^{(\mathrm{k})}_\mathrm{B}\big[\nu\cdot\Tilde{\mathrm{E}}\big] - \omega^2\mu_\mathrm{m}\varsigma\delta^2 \; \mathbb{N}_\mathrm{B}^{(\mathrm{k})}\big[\Tilde{\mathrm{E}} \big] = \Tilde{\mathrm{E}}^\textbf{in}.
\end{align}
To write the closed system of equations, we need to take the Dirichlet trace of the above equations in $\mathrm{B}$ and  use the jump relation of the single-layer operator to get
\begin{align}
\nonumber
    \Big[\big(1+\frac{\varsigma}{2}\big)\mathrm{I}-\varsigma\mathcal{K}_\mathrm{B}^*\Big](\nu\cdot\Tilde{\mathrm{E}}) = \omega^2\mu_\mathrm{m}\varsigma\delta^2 \; \nu\cdot\mathbb{N}_\mathrm{B}^{(\mathrm{k})}\big[\Tilde{\mathrm{E}} \big] + \nu \cdot \Tilde{\mathrm{E}}^\textbf{in}
    \\ 
    \Rightarrow \big(1+\frac{\varsigma}{2}\big)\nu\cdot\Tilde{\mathrm{E}} = \varsigma\mathcal{K}_\mathrm{B}^*(\nu\cdot\Tilde{\mathrm{E}}) +   \omega^2\mu_\mathrm{m}\varsigma\delta^2 \; \nu\cdot\mathbb{N}_\mathrm{B}^{(\mathrm{k})}\big[\Tilde{\mathrm{E}} \big] +  \nu \cdot \Tilde{\mathrm{E}}^\textbf{in}.
\end{align}
Afterward, we obtain by taking into account the $\mathbb{H}^{\frac{1}{2}}$-norm on both sides of the aforementioned equation
\begin{align}
        \Rightarrow \big(1+\frac{\varsigma}{2}\big)\Big\Vert \nu\cdot\Tilde{\mathrm{E}}\Big\Vert_{\mathbb{H}^{\frac{1}{2}}(\partial\mathrm{B})} = \varsigma \Big\Vert\mathcal{K}_\mathrm{B}^*(\nu\cdot\Tilde{\mathrm{E}})\Big\Vert_{\mathbb{H}^{\frac{1}{2}}(\partial\mathrm{B})}  + \omega^2\mu_\mathrm{m}\varsigma\delta^2\; \Big\Vert\nu\cdot\mathbb{N}_\mathrm{B}^{(\mathrm{k})}\big[\Tilde{\mathrm{E}} \big] \Big\Vert_{\mathbb{H}^{\frac{1}{2}}(\partial\mathrm{B})}+   \Big\Vert\nu \cdot \Tilde{\mathrm{E}}^\textbf{in}\Big\Vert_{\mathbb{H}^{\frac{1}{2}}(\partial\mathrm{B})}.
\end{align}
Then, from the continuity of the operator $\mathcal{K}_\mathrm{B}^*: \mathbb{H}^{-\frac{1}{2}}(\partial\mathrm{B})\to \mathbb{H}^{\frac{1}{2}}(\partial\mathrm{B})$ we obtain
\begin{align}\label{1.26}
        \Rightarrow \Big\Vert \nu\cdot\Tilde{\mathrm{E}}\Big\Vert_{\mathbb{H}^{\frac{1}{2}}(\partial\mathrm{B})} \lesssim \big|\frac{\varsigma}{1+\frac{\varsigma}{2}}\big| \Big\Vert \nu\cdot\Tilde{\mathrm{E}}\Big\Vert_{\mathbb{H}^{-\frac{1}{2}}(\partial\mathrm{B})}+\frac{\omega^2\mu_\mathrm{m}\varsigma\delta^2}{\big|1+\frac{\varsigma}{2}\big|}\; \Big\Vert\nu\cdot\mathbb{N}_\mathrm{B}^{(\mathrm{k})}\big[\Tilde{\mathrm{E}} \big] \Big\Vert_{\mathbb{H}^{\frac{1}{2}}(\partial\mathrm{B})}+  \frac{1}{\big|1+\frac{\varsigma}{2}\big|} \Big\Vert\nu \cdot \Tilde{\mathrm{E}} ^\textbf{in}\Big\Vert_{\mathbb{H}^{\frac{1}{2}}(\partial\mathrm{B})}.
\end{align}
Now, from (\ref{2.12}), we estimate the following using the estimate (\ref{aprori3D})
\begin{align}\label{1.27}
    \big\Vert \textbf{curl}\;\mathrm{E}\big\Vert_{\mathbb{L}^2(\mathrm{B})}\nonumber &\lesssim \omega^2\mu_\mathrm{m}\varsigma\delta^2 \big\Vert \textbf{curl}\;\mathbb{N}_\mathrm{B}^{(\mathrm{k})}\big[\Tilde{\mathrm{E}} \big]\big\Vert_{\mathbb{L}^2(\mathrm{B})} + \big\Vert \textbf{curl}\;\Tilde{\mathrm{E}}^\textbf{in}\big\Vert_{\mathbb{L}^2(\mathrm{B})}
    \\ &\lesssim \omega^2\mu_\mathrm{m}\varsigma\delta^2 \big\Vert \Tilde{\mathrm{E}} \big\Vert_{\mathbb{L}^2(\mathrm{B})} + \big\Vert \Tilde{\mathrm{E}}^\textbf{in}\big\Vert_{\mathbb{L}^2(\mathrm{B})} = \mathcal{O}\big(\omega^2\mu_\mathrm{m}\varsigma\delta^{2-\mathrm{h}}\big) + \mathcal{O}(1) \sim 1.
\end{align}
Therefore, we obtain the following estimate
\begin{align}\label{1.28}
    \big\Vert \nu\cdot\Tilde{\mathrm{E}}\big\Vert_{\mathbb{H}^{-\frac{1}{2}}(\partial\mathrm{B})} \nonumber &\lesssim \big\Vert \Tilde{\mathrm{E}}\big\Vert_{\mathbb{H}_{\textbf{curl}}(\mathrm{B})}
    \nonumber\\ &\lesssim \Big(\big\Vert \Tilde{\mathrm{E}}\big\Vert_{\mathbb{L}^2(\mathrm{B})}^2 + \big\Vert \textbf{curl}\;\Tilde{\mathrm{E}}\big\Vert_{\mathbb{L}^2(\mathrm{B})}^2\Big)^{\frac{1}{2}} \sim \delta^{-\mathrm{h}}.
\end{align}
Furthermore, inserting the estimates (\ref{1.27}) and (\ref{1.28}) in (\ref{1.26}), taking into account (\ref{lorenz-model}), the fact that $\lambda_\mathrm{n}^{(3)}$ \footnote{These eigenvalues have $\frac{1}{2}$ as an accumulation point but they are different from it.} is different from $\frac{1}{2}$, and due to the smoothness of the Newtonian operator, we obtain
\begin{align}
        \Rightarrow \big\Vert \nu\cdot\Tilde{\mathrm{E}}\big\Vert_{\mathbb{H}^{\frac{1}{2}}(\partial\mathrm{B})} \nonumber&\lesssim \big|\frac{\varsigma}{1+\frac{\varsigma}{2}}\big| \delta^{-\mathrm{h}} +\frac{\omega^2\mu_\mathrm{m}\varsigma\delta^2}{\big|1+\frac{\varsigma}{2}\big|}\; \big\Vert \Tilde{\mathrm{E}}\big\Vert_{\mathbb{L}^2(\mathrm{B})}+  \frac{1}{\big|1+\frac{\varsigma}{2}\big|} \big\Vert \Tilde{\mathrm{E}} ^\textbf{in}\big\Vert_{\mathbb{L}^{2}(\mathrm{B})}
        \\ \nonumber &\lesssim \big|\frac{\varsigma}{1+\frac{\varsigma}{2}}\big| \delta^{-\mathrm{h}} + \frac{\omega^2\mu_\mathrm{m}\varsigma\delta^{2-\mathrm{h}}}{\big|1+\frac{\varsigma}{2}\big|} + \frac{1}{\big|1+\frac{\varsigma}{2}\big|} \sim \delta^{-\mathrm{h}}.
\end{align}
Then, based on the Calder\'on Zygmund inequalities and the traces properties, we deduce
\begin{align}\label{2.18}
    \big\Vert \Tilde{\mathrm{E}}\big\Vert_{\mathbb{H}^{1}(\mathrm{B})} \lesssim \underbrace{\big\Vert \Tilde{\mathrm{E}}\big\Vert_{\mathbb{L}^{2}(\mathrm{B})}}_{\sim \; \delta^{-\mathrm{h}}} + \underbrace{\big\Vert \textbf{curl}\;\mathrm{E}\big\Vert_{\mathbb{L}^2(\mathrm{B})}}_{\sim \; 1} + \underbrace{\big\Vert \textbf{div}\;\mathrm{E}\big\Vert_{\mathbb{L}^2(\mathrm{B})}}_{= \; 0} + \big\Vert \nu\cdot\Tilde{\mathrm{E}}\big\Vert_{\mathbb{H}^{\frac{1}{2}}(\partial\mathrm{B})} \sim \delta^{-\mathrm{h}}.
\end{align}
We have the following estimate based on Gagliardo-Nirenberg inequality, estimate (\ref{aprori3D}) and using (\ref{2.18})
\begin{align}\nonumber
\big\Vert \Tilde{\mathrm{E}}\big\Vert_{\mathbb{L}^\mathrm{4}(\mathrm{B})} \nonumber &\lesssim \big\Vert \Tilde{\mathrm{E}}\big\Vert_{\mathbb{L}^\mathrm{2}(\mathrm{B})}^{\frac{1}{2}}\big\Vert \Tilde{\mathrm{E}}\big\Vert_{\mathbb{H}^1(\mathrm{B})}^\frac{1}{2}
\\ &\lesssim \delta^{-\frac{\mathrm{h}}{2}} \cdot \delta^{-\frac{\mathrm{h}}{2}} \sim \delta^{-\mathrm{h}}.
\end{align}
So, using the aforementioned estimate and scaling it back to $\Omega$, we arrive at
\begin{align}\nonumber
    \big\Vert \mathrm{E}\big\Vert_{\mathbb{L}^\mathrm{4}(\mathrm{\Omega})} \sim \delta^{\frac{3}{4}-\mathrm{h}}.
\end{align}
Consequently, we derive the desired a priori estimate
\begin{align}
    \Big\Vert |\mathrm{E}|^2\Big\Vert_{\mathbb{L}^\mathrm{2}(\mathrm{\Omega})} \sim \delta^{\frac{3}{2}-2\mathrm{h}}.
\end{align}

\subsection{Dielectric Case} 
Similar to the plasmonic situation, we consider the same equation as was derived in (\ref{1.26}), recalling the fact that for dielectric nanoparticle, we have $\varsigma\sim \delta^{-2},\; \delta\ll1$
\begin{align}\label{d3}
        \Big\Vert \nu\cdot\Tilde{\mathrm{E}}\Big\Vert_{\mathbb{H}^{\frac{1}{2}}(\partial\mathrm{B})} \lesssim \big|\frac{\varsigma}{1+\frac{\varsigma}{2}}\big| \Big\Vert \nu\cdot\Tilde{\mathrm{E}}\Big\Vert_{\mathbb{H}^{-\frac{1}{2}}(\partial\mathrm{B})}+\frac{\omega^2\mu_\mathrm{m}\varsigma\delta^2}{\big|1+\frac{\varsigma}{2}\big|}\; \Big\Vert\nu\cdot\mathbb{N}_\mathrm{B}^{(\mathrm{k})}\big[\Tilde{\mathrm{E}} \big] \Big\Vert_{\mathbb{H}^{\frac{1}{2}}(\partial\mathrm{B})}+  \frac{1}{\big|1+\frac{\varsigma}{2}\big|} \Big\Vert\nu \cdot \Tilde{\mathrm{E}} ^\textbf{in}\Big\Vert_{\mathbb{H}^{\frac{1}{2}}(\partial\mathrm{B})}.
\end{align}
Following that, we first estimate $\Big\Vert \nu\cdot\Tilde{\mathrm{E}}\Big\Vert_{\mathbb{H}^{-\frac{1}{2}}(\partial\mathrm{B})}$. To accomplish that, we perform the following estimates utilizing the estimate (\ref{aprdie}) and the continuity of the Newtonian potential. First, we do
\begin{align}\label{d4}
\nonumber
    \big\Vert \textbf{curl}\;\mathrm{E}\big\Vert_{\mathbb{L}^2(\mathrm{B})}\nonumber &\lesssim \omega^2\mu_\mathrm{m}\varsigma\delta^2 \big\Vert \textbf{curl}\;\mathbb{N}_\mathrm{B}^{(\mathrm{k})}\big[\Tilde{\mathrm{E}} \big]\big\Vert_{\mathbb{L}^2(\mathrm{B})} + \big\Vert \textbf{curl}\;\Tilde{\mathrm{E}}^\textbf{in}\big\Vert_{\mathbb{L}^2(\mathrm{B})}
    \\ &\lesssim \omega^2\mu_\mathrm{m}\varsigma\delta^2 \big\Vert \Tilde{\mathrm{E}} \big\Vert_{\mathbb{L}^2(\mathrm{B})} + \big\Vert \Tilde{\mathrm{E}}^\textbf{in}\big\Vert_{\mathbb{L}^2(\mathrm{B})} = \mathcal{O}(\omega^2\mu_\mathrm{m}\delta^{1-\mathrm{h}}) + \mathcal{O}(1) \sim 1.
\end{align}
Consequently, using a priori estimate (\ref{aprdie}), we derive
\begin{align}\label{d5}
    \big\Vert \nu\cdot\Tilde{\mathrm{E}}\big\Vert_{\mathbb{H}^{-\frac{1}{2}}(\partial\mathrm{B})} \nonumber &\lesssim \big\Vert \Tilde{\mathrm{E}}\big\Vert_{\mathbb{H}_{\textbf{curl}}(\mathrm{B})}
    \nonumber\\ &\lesssim \Big(\big\Vert \Tilde{\mathrm{E}}\big\Vert_{\mathbb{L}^2(\mathrm{B})}^2 + \big\Vert \textbf{curl}\;\Tilde{\mathrm{E}}\big\Vert_{\mathbb{L}^2(\mathrm{B})}^2\Big)^{\frac{1}{2}} \sim 1.
\end{align}
Moreover, we use the estimate (\ref{d5}) and continuity of the Newtonian operator in the equation (\ref{d3}) to obtain
\begin{align}
        \Rightarrow \big\Vert \nu\cdot\Tilde{\mathrm{E}}\big\Vert_{\mathbb{H}^{\frac{1}{2}}(\partial\mathrm{B})} 
        \nonumber&\lesssim \big|\frac{\varsigma}{1+\frac{\varsigma}{2}}\big| +\frac{\omega^2\mu_\mathrm{m}\varsigma\delta^2}{\big|1+\frac{\varsigma}{2}\big|}\; \big\Vert \Tilde{\mathrm{E}}\big\Vert_{\mathbb{L}^2(\mathrm{B})}+  \frac{1}{\big|1+\frac{\varsigma}{2}\big|} \big\Vert \Tilde{\mathrm{E}} ^\textbf{in}\big\Vert_{\mathbb{L}^{2}(\mathrm{B})}
        \\ \nonumber &\lesssim \big|\frac{\varsigma}{1+\frac{\varsigma}{2}}\big|  + \frac{\omega^2\mu_\mathrm{m}\delta^{1-\mathrm{h}}}{\big|1+\frac{\varsigma}{2}\big|} + \frac{1}{\big|1+\frac{\varsigma}{2}\big|} \sim 1.
\end{align}
We then arrive to the following estimate using  Calder\'on Zygmund inequalities and the traces properties
\begin{align}\label{2.19}\nonumber
    \big\Vert \Tilde{\mathrm{E}}\big\Vert_{\mathbb{H}^{1}(\mathrm{B})} \lesssim \underbrace{\big\Vert \Tilde{\mathrm{E}}\big\Vert_{\mathbb{L}^{2}(\mathrm{B})}}_{\sim \; \delta^{1-\mathrm{h}}} + \underbrace{\big\Vert \textbf{curl}\;\mathrm{E}\big\Vert_{\mathbb{L}^2(\mathrm{B})}}_{\sim \; 1} + \underbrace{\big\Vert \textbf{div}\;\mathrm{E}\big\Vert_{\mathbb{L}^2(\mathrm{B})}}_{= \; 0} + \underbrace{\big\Vert \nu\cdot\Tilde{\mathrm{E}}\big\Vert_{\mathbb{H}^{\frac{1}{2}}(\partial\mathrm{B})}}_{\sim \; 1} \sim 1.
\end{align}
We have the following estimate based on Gagliardo-Nirenberg inequality, estimate (\ref{aprdie}) and using (\ref{2.19})
\begin{align}\nonumber
\big\Vert \Tilde{\mathrm{E}}\big\Vert_{\mathbb{L}^\mathrm{4}(\mathrm{B})} \nonumber &\lesssim \big\Vert \Tilde{\mathrm{E}}\big\Vert_{\mathbb{L}^\mathrm{2}(\mathrm{B})}^{\frac{1}{2}}\big\Vert \Tilde{\mathrm{E}}\big\Vert_{\mathbb{H}^1(\mathrm{B})}^\frac{1}{2}
\\ &\lesssim \delta^{\frac{1}{2}-\frac{\mathrm{h}}{2}} \cdot 1 \sim \delta^{\frac{1}{2}-\frac{\mathrm{h}}{2}}.
\end{align}
Thus, after scaling back to $\Omega$, we derive from the aforementioned estimate
\begin{align}\nonumber
    \big\Vert \mathrm{E}\big\Vert_{\mathbb{L}^\mathrm{4}(\mathrm{\Omega})} \sim \delta^{\frac{5}{4}-\frac{\mathrm{h}}{2}}
\end{align}
and hence
\begin{align}
    \Big\Vert |\mathrm{E}|^2\Big\Vert_{\mathbb{L}^\mathrm{2}(\mathrm{\Omega})} \sim \delta^{\frac{5}{2}-\mathrm{h}}.
\end{align}

\section{Appendix}\label{appen}

\subsection{Proof of Lemma \ref{hypersingularexpression}}
Let us start with defining the Double layer heat operator $\mathcal{K}$ corresponding to the density $\Phi^{\textbf{e}}(\xi,\mathrm{t};\mathrm{z},\cdot)$ by
\begin{align}
\nonumber
    \nonumber&\mathcal{K}\big[\Phi^{\textbf{e}}(\xi,\mathrm{t};\mathrm{z},\cdot)\big](\mathrm{y},\tau) 
    \\ \nonumber&=\frac{1}{\alpha}\displaystyle \int_{0}^{t}\int_{\partial\Omega}\frac{\alpha (\mathrm{v}-\mathrm{y})\cdot\nu_\mathrm{y}}{2(\tau-\mathrm{s})} \Big(\dfrac{\alpha}{4\pi(\tau-\mathrm{s})}\Big)^\frac{3}{2}\textbf{exp}\big(\dfrac{\alpha|\mathrm{v}-\mathrm{y}|^2}{4(\tau-\mathrm{s})}\big)\Phi^{\textbf{e}}(\xi,\mathrm{t};\mathrm{z},\mathrm{s})d\sigma_{\mathrm{v}}d\mathrm{s} 
    \\ &=\displaystyle\int_{\partial\Omega} \dfrac{(\mathrm{y}-\mathrm{v})\cdot \nu_\mathrm{y}}{4\pi}\frac{1}{|\mathrm{y}-\mathrm{v}|^3}\bigg[\displaystyle\int_{0}^{\tau}\Big(\dfrac{\alpha}{4\pi(\tau-\mathrm{s})}\Big)^\frac{3}{2}\dfrac{2\pi |\mathrm{v}-\mathrm{y}|^3}{(\tau-\mathrm{s})}\textbf{exp}\big(-\dfrac{\alpha|\mathrm{v}-\mathrm{y}|^2}{4(\tau-\mathrm{s})}\big)\Phi^{\textbf{e}}(\xi,\mathrm{t};\mathrm{z},\mathrm{s})d\mathrm{s}\bigg]d\sigma_{\mathrm{v}},
\end{align}
where $\alpha := \alpha_\mathrm{p} = \frac{\rho_{\mathrm{p}}C_{\mathrm{p}}}{\gamma_{\mathrm{p}}}$ and we set 
\begin{equation}\label{defvarphi3D}
 \varphi(\mathrm{v},\mathrm{y},\mathrm{t}, \tau) := \int_{0}^{\tau}\Big(\dfrac{\alpha}{4\pi(\tau-\mathrm{s})}\Big)^\frac{3}{2}\dfrac{2\pi |\mathrm{v}-\mathrm{y}|^3}{(\tau-\mathrm{s})}\textbf{exp}\big(-\dfrac{\alpha|\mathrm{v}-\mathrm{y}|^2}{4(\tau-\mathrm{s})}\big)\Phi^{\textbf{e}}(\xi,\mathrm{t};\mathrm{z},\mathrm{s})d\mathrm{s}.
\end{equation}
We then use the change of variable $\mathrm{m} :=\frac{\sqrt{\alpha}|\mathrm{y}-\mathrm{v}|}{2\sqrt{\tau-\mathrm{s}}}$, then it follows that $
    \mathrm{s} = \tau - \frac{\alpha|\mathrm{y}-\mathrm{v}|^2}{4\mathrm{m}^2}$ and then $d\mathrm{s} = \frac{1}{2}\alpha|\mathrm{y}-\mathrm{v}|^2\mathrm{m}^{-3}.
$ Therefore, we deduce
\begin{align}
    \nonumber
     \varphi(\mathrm{v},\mathrm{y},\mathrm{t}, \tau) 
     &= \displaystyle\int_{\frac{\sqrt{\alpha}|\mathrm{y}-\mathrm{v}|}{2\sqrt{\tau}}}^{\infty}\textbf{exp}(-\mathrm{m}^2)\Phi^{\textbf{e}}\big(\xi,\mathrm{t}; \mathrm{z},\tau - \dfrac{\alpha|\mathrm{y}-\mathrm{v}|^2}{4\mathrm{m}^2}\big)\frac{2\pi}{(\tau-\mathrm{s})}\mathrm{m}^3\frac{1}{2}\alpha|\mathrm{y}-\mathrm{v}|^2\mathrm{m}^{-3}d\mathrm{m} 
     \\ \nonumber&=\frac{4}{\sqrt{\pi}}\displaystyle\int_{\frac{\sqrt{\alpha}|\mathrm{y}-\mathrm{v}|}{2\sqrt{\tau}}}^{\infty}\textbf{exp}(-\mathrm{m}^2)\Phi^{\textbf{e}}\big(\xi,\mathrm{t}; \mathrm{z},\tau - \dfrac{\alpha|\mathrm{y}-\mathrm{v}|^2}{4\mathrm{m}^2}\big)\frac{\alpha|\mathrm{y}-\mathrm{v}|^2}{4(\tau-\mathrm{s})}d\mathrm{m} 
     \\ \nonumber&= \frac{4}{\sqrt{\pi}}\displaystyle\int_{\frac{\sqrt{\alpha}|\mathrm{y}-\mathrm{v}|}{2\sqrt{\tau}}}^{\infty}\mathrm{m}^2\textbf{exp}(-\mathrm{m}^2)\Phi^{\textbf{e}}\big(\xi,\mathrm{t}; \mathrm{z},\tau - \dfrac{\alpha|\mathrm{y}-\mathrm{v}|^2}{4\mathrm{m}^2}\big)d\mathrm{m}.
\end{align}
Now, we can show that $\displaystyle \int_0^\infty\mathrm{m}^2\textbf{exp}(-\mathrm{m}^2)d\mathrm{m}= \frac{\sqrt{\pi}}{4}$. Therefore, with this result, we rewrite the above equation as follows:
\begin{align}\label{varphi3D}
    \varphi(\mathrm{v},\mathrm{y},\mathrm{t}, \tau) - \Phi^{\textbf{e}}(\xi,\mathrm{t};\mathrm{z}, \tau ) \nonumber &= \displaystyle\int_{\frac{\sqrt{\alpha}|\mathrm{y}-\mathrm{v}|}{2\sqrt{\tau}}}^{\infty}\mathrm{m}^2\textbf{exp}(-\mathrm{m}^2)\bigg[\Phi^{\textbf{e}}\big(\xi,\mathrm{t}; \mathrm{z},\tau - \dfrac{\alpha|\mathrm{y}-\mathrm{v}|^2}{4\mathrm{m}^2}\big) - \Phi^{\textbf{e}}\big(\xi,\mathrm{t};\mathrm{z}, \tau \big)\bigg]d\mathrm{m}
    \nonumber
    \\ &- \frac{4}{\sqrt{\pi}}\Phi^{\textbf{e}}(\xi,\mathrm{t};\mathrm{z}, \tau )\displaystyle\int_{0}^{\frac{\sqrt{\alpha}|\mathrm{y}-\mathrm{v}|}{2\sqrt{\tau}}}\mathrm{m}^2\textbf{exp}(-\mathrm{m}^2) d\mathrm{m}.
\end{align}
We also have
\begin{align}\label{esti13D}
    \Phi^{\textbf{e}}(\xi,\mathrm{t};\mathrm{z}, \tau ) = \displaystyle\int_{\mathrm{t}}^\tau \partial_{s}\Phi^{\textbf{e}}(\xi,\mathrm{t};\mathrm{z}, \mathrm{s} )d\mathrm{s} \nonumber &\lesssim \Vert 1\Vert_{\mathrm{H}^{\frac{1}{4}}(\mathrm{t},\tau)}\Vert \partial_{s}\Phi^{\textbf{e}}(\xi,\mathrm{t};\mathrm{z}, \cdot )\Vert_{\mathrm{H}^{-\frac{1}{4}}(\mathrm{t},\tau)}
    \\ &\lesssim \tau^{\frac{1}{2}} \Vert \partial_{s}\Phi^{\textbf{e}}(\xi,\mathrm{t};\mathrm{z}, \cdot )\Vert_{\mathrm{H}^{-\frac{1}{4}}(0,\tau)}.
\end{align}
In a similar way as before, we write the following expression
\begin{equation}
\nonumber
    \Phi^{\textbf{e}}\big(\xi,\mathrm{t}; \mathrm{z},\tau - \dfrac{\alpha|\mathrm{y}-\mathrm{v}|^2}{4\mathrm{m}^2}\big) - \Phi^{\textbf{e}}\big(\xi,\mathrm{t};\mathrm{z}, \tau \big) = \displaystyle\int_{\tau}^{\tau - \frac{\alpha|\mathrm{y}-\mathrm{v}|^2}{4\mathrm{m}^2}}\partial_{s}\Phi^{\textbf{e}}\big(\xi,\mathrm{t};\mathrm{z}, \mathrm{s}\big)d\mathrm{s}.
\end{equation}
\noindent
We observe that $\mathrm{m}\ge \frac{\sqrt{\alpha}|\mathrm{y}-\mathrm{v}|}{2\sqrt{\tau}}$ which implies $\tau-\frac{\alpha|\mathrm{y}-\mathrm{v}|^2}{4\mathrm{m}^2} \ge 0$.
Hence we get 
\begin{align}\label{esti3D}
    \Phi^{\textbf{e}}\big(\xi,\mathrm{t}; \mathrm{z},\tau - \dfrac{\alpha|\mathrm{y}-\mathrm{v}|^2}{4\mathrm{m}^2}\big) - \Phi^{\textbf{e}}\big(\xi,\mathrm{t};\mathrm{z}, \tau \big) = \mathcal{O} \big(\frac{\sqrt{\alpha}|\mathrm{y}-\mathrm{v}|}{2\mathrm{m}}\Vert \partial_{s}\Phi^{\textbf{e}}(\xi,\mathrm{t};\mathrm{z}, \cdot )\Vert_{\mathrm{H}^{-\frac{1}{4}}(0,\tau)}\big).
\end{align}
Then, inserting (\ref{esti13D}) and (\ref{esti3D}) in (\ref{varphi3D}), we obtain
\begin{align}\label{varphi23D}
\nonumber
   \varphi(\mathrm{v},\mathrm{y},\mathrm{t}, \tau) - \Phi^{\textbf{e}}(\xi,\mathrm{t};\mathrm{z}, \tau ) &= \mathcal{O}\bigg(\frac{4}{\sqrt{\pi}}\displaystyle \int_{\frac{\sqrt{\alpha}|\mathrm{y}-\mathrm{v}|}{2\sqrt{\tau}}}^{\infty}\mathrm{m}\bm{\mathrm{e}}^{-\mathrm{m}^2}d\mathrm{m}\frac{\sqrt{\alpha}|\mathrm{y}-\mathrm{v}|}{2}\Vert \partial_{s}\Phi^{\textbf{e}}(\xi,\mathrm{t};\mathrm{z}, \cdot )\Vert_{\mathrm{H}^{-\frac{1}{4}}(0,\tau)}\bigg)
   \\ &+ \mathcal{O} \bigg(\frac{4}{\sqrt{\pi}}\displaystyle\int_{0}^{\frac{\sqrt{\alpha}|\mathrm{y}-\mathrm{v}|}{2\sqrt{\tau}}}\mathrm{m}^2\bm{\mathrm{e}}^{-\mathrm{m}^2}d\mathrm{m}\;\tau^{\frac{1}{2}}\Vert \partial_{s}\Phi^{\textbf{e}}(\xi,\mathrm{t};\mathrm{z}, \cdot )\Vert_{\mathrm{H}^{-\frac{1}{4}}(0,\tau)}\bigg).
\end{align}
We also have $\displaystyle\int_{\frac{\sqrt{\alpha}|\mathrm{y}-\mathrm{v}|}{2\sqrt{\tau}}}^{\infty}\mathrm{m}\bm{\mathrm{e}}^{-\mathrm{m}^2}d\mathrm{m} = \frac{1}{2}\bm{\mathrm{e}}^{-\frac{\alpha|\mathrm{y}-\mathrm{v}|^2}{4\tau}}$ and \\
$\displaystyle\int_0^{\frac{\sqrt{\alpha}|\mathrm{y}-\mathrm{v}|}{2\sqrt{\tau}}}\mathrm{m}^2\bm{\mathrm{e}}^{-\mathrm{m}^2}d\mathrm{m} = \frac{\sqrt{\pi}}{4}\bm{\textbf{erf}}(\frac{\sqrt{\alpha}|\mathrm{y}-\mathrm{v}|}{2\sqrt{\tau}})-\frac{1}{2}\frac{\sqrt{\alpha}|\mathrm{y}-\mathrm{v}|}{2\sqrt{\tau}}\bm{\mathrm{e}}^{-\frac{\alpha|\mathrm{y}-\mathrm{v}|^2}{4\tau}}$,\; where $"\bm{\textbf{erf}}"$ is the error function. Consequently, with this integral identities, we derive from (\ref{varphi23D})
\begin{align}
\nonumber
   &\nonumber\varphi(\mathrm{v},\mathrm{y},\mathrm{t}, \tau) - \Phi^{\textbf{e}}(\xi,\mathrm{t};\mathrm{z}, \tau ) \\ \nonumber&= \mathcal{O}\bigg(\frac{1}{\sqrt{\pi}}\displaystyle \sqrt{\alpha}|\mathrm{y}-\mathrm{v}| \bm{\mathrm{e}}^{-\frac{\alpha|\mathrm{y}-\mathrm{v}|^2}{4\tau}}\Vert \partial_{s}\Phi^{\textbf{e}}(\xi,\mathrm{t};\mathrm{z}, \cdot )\Vert_{\mathrm{H}^{-\frac{1}{4}}(0,\tau)}\bigg)+ \mathcal{O} \bigg(\tau^{\frac{1}{2}}\bm{\textbf{erf}}\big(\frac{\sqrt{\alpha}|\mathrm{y}-\mathrm{v}|}{2\sqrt{\tau}}\big)\Vert \partial_{s}\Phi^{\textbf{e}}(\xi,\mathrm{t};\mathrm{z}, \cdot )\Vert_{\mathrm{H}^{-\frac{1}{4}}(0,\tau)}\bigg) \\ &- \mathcal{O}\bigg(\frac{1}{\sqrt{\pi}}\displaystyle \sqrt{\alpha}|\mathrm{y}-\mathrm{v}| \bm{\mathrm{e}}^{-\frac{\alpha|\mathrm{y}-\mathrm{v}|^2}{4\tau}}\Vert \partial_{s}\Phi^{\textbf{e}}(\xi,\mathrm{t};\mathrm{z}, \cdot )\Vert_{\mathrm{H}^{-\frac{1}{4}}(0,\tau)}\bigg) .
\end{align}
Therefore, we obtain
\begin{align}
    \varphi(\mathrm{v},\mathrm{y},\mathrm{t}, \tau) - \Phi^{\textbf{e}}(\xi,\mathrm{t};\mathrm{z}, \tau) = \mathcal{O} \bigg(\tau^{\frac{1}{2}}\bm{\textbf{erf}}\big(\frac{\sqrt{\alpha}|\mathrm{y}-\mathrm{v}|}{2\sqrt{\tau}}\big)\Vert \partial_{s}\Phi^{\textbf{e}}(\xi,\mathrm{t};\mathrm{z}, \cdot )\Vert_{\mathrm{H}^{-\frac{1}{4}}(0,\tau)}\bigg).
\end{align}
Furthermore, considering the regime $|\mathrm{y}-\mathrm{v}|\ll \mathrm{t}$, and as we have error function's Maclaurin series as: $\textbf{erf}(\mathrm{a}) = \frac{2}{\sqrt{\pi}}\displaystyle\sum_{\mathrm{j=0}}^\infty \frac{(-1)^\mathrm{j}\mathrm{a}^{2\mathrm{j}+1} }{\mathrm{j}!(2\mathrm{j}+1)},$ we deduce
\begin{align}
    \varphi(\mathrm{v},\mathrm{y},\mathrm{t}, \tau) - \Phi^{\textbf{e}}(\xi,\mathrm{t};\mathrm{z}, \tau) = \mathcal{O} \big(\sqrt{\alpha}|\mathrm{y}-\mathrm{v}|\big)\Vert \partial_{s}\Phi^{\textbf{e}}(\xi,\mathrm{t};\mathrm{z}, \cdot )\Vert_{\mathrm{H}^{-\frac{1}{4}}(0,\tau)}\big).
\end{align}
 The proof of Lemma \ref{hypersingularexpression} is completed.

\end{document}